\documentclass[11pt,twoside,a4paper]{article}

\usepackage{fullpage}
\usepackage{amsmath}
\usepackage{amsthm}
\usepackage{mathrsfs}
\usepackage{amsfonts} 
\usepackage{graphicx}
\usepackage{hyperref}
\usepackage{epsfig}

\usepackage{amsmath,amssymb,amsfonts,amsthm,amstext,verbatim}
\usepackage{enumerate,color,graphicx,stmaryrd}
\usepackage{psfrag}  
\usepackage{url}
\usepackage{mdwlist} 
\usepackage{mathtools}
\usepackage{wasysym}   
\usepackage{mathrsfs}
\usepackage{hyperref}
\usepackage{epsfig}

\usepackage{floatrow}
\usepackage{subcaption}


\theoremstyle{plain}
\newtheorem{theorem}{Theorem}
\newtheorem{proposition}{Proposition}[section]
\newtheorem{lemma}[proposition]{Lemma}
\newtheorem{corollary}[proposition]{Corollary}
\theoremstyle{definition}
\newtheorem{definition}{Definition}[section]

\theoremstyle{remark}









\newcommand{\N}{\mathbb{N}}
\newcommand{\Z}{\mathbb{Z}}
\newcommand{\C}{\mathbb{C}}



\newcommand{\sle}{\mathrm{SLE}}

\renewcommand{\H}{\mathbb{H}}
\renewcommand{\S}{\mathrm{Strip}}
\newcommand{\Rect}{\mathrm{Rect}}

\newcommand{\ang}{\mathrm{Ang}}
\newcommand{\weight}{\mathrm{w}}
\newcommand{\wind}{\mathrm{wind}}

\newcommand{\SAW}{\mathrm{SAW}}

\newcommand{\ag}[1]{{\textcolor{red}{[AG: #1\marginpar{\textcolor{blue}{\bf $\bigstar$}}]}}}

\renewcommand\Re{\operatorname{Re}}


\newcommand{\Tri}{{\rm Tri}}



\newcommand{\sfH}{\mathsf H}

\title{Self-avoiding walk on~$\Z^2$ with Yang--Baxter weights: \\ universality of critical fugacity and 2-point function.}
\date{\today}


\author{Alexander Glazman \and Ioan Manolescu}


\begin{document}

\newcommand{\Addresses}{{
  \bigskip
  \footnotesize

  A.~Glazman, \textsc{Tel Aviv University, School of Mathematical Sciences, Tel Aviv, Israel.}\par\nopagebreak
 \texttt{glazman@tauex.tau.ac.il}

  \medskip

  I.~Manolescu, \textsc{Universit{\'e} de Fribourg, Fribourg, Switzerland}\par\nopagebreak
  \texttt{ioan.manolescu@unifr.ch}

}}


\maketitle

\begin{abstract}
We consider a self-avoiding walk model (SAW) on the faces of the square lattice~$\mathbb{Z}^2$. This walk can traverse the same face twice, but crosses any edge at most once. The weight of a walk is a product of local weights: each square visited by the walk yields a weight that depends on the way the walk passes through it. The local weights are parametrised by angles~$\theta\in[\frac{\pi}{3},\frac{2\pi}{3}]$ and satisfy the Yang--Baxter equation. The self-avoiding walk is embedded in the plane by replacing the square faces of the grid with rhombi with corresponding angles.


By means of the Yang-Baxter transformation, we show that the 2-point function of the walk in the half-plane does not depend on the rhombic tiling ({\em i.e.} on the angles chosen). In particular, this statistic coincides with that of the self-avoiding walk on the hexagonal lattice. Indeed, the latter can be obtained by choosing all angles $\theta$ equal to~$\frac{\pi}{3}$. 

For the hexagonal lattice, the critical fugacity of SAW was recently proved to be equal to~$1+\sqrt{2}$. We show that the same is true for any choice of angles. In doing so, we also give a new short proof to the fact that the partition function of self-avoiding bridges in a strip of the hexagonal lattice tends to~$0$ as the width of the strip tends to infinity. This proof also yields a quantitative bound on the convergence.
\end{abstract}

\section{Self-avoiding walk on~$\Z^2$ with Yang--Baxter weights}
\label{sec:intro}

In spite of the apparent simplicity of the model, few rigorous results are available for two dimensional self-avoiding walk. The main conjecture is the convergence of plane SAW to a conformally invariant scaling limit. The latter is shown~\cite{LSW} to be equal to~$\text{SLE} (8/3)$, provided the scaling limit exists and is conformally invariant. A natural way to attack this problem is via the so-called parafermionic observable (see below for a definition) and its partial discrete holomorphicity. H.~Duminil-Copin and S.~Smirnov~\cite{DCSmi10} used the parafermionic observable to prove that the connective constant for the hexagonal lattice is equal to~$\sqrt{2+\sqrt{2}}$, a result that had beed non-rigorously derived by B.~Nienhuis in~\cite{Nie82}.

Self-avoiding walk on the square lattice is not believed to be integrable, therefore it is not reasonable to expect any explicit formula for the connective constant in this case\footnote{The most recent numerical estimate for the connective constant of the square lattice was obtained in \cite{Gutt16} as $2.63815853032790(3)$; it does not allow to conclude whether the connective constant is an algebraic number.}, nor the existence of a well-behaved equivalent observable. However, one may study natural variations of the model, such as the weighted version presented here, that render it integrable. By integrability here we mean that the weights satisfy the Yang--Baxter equation. Similar integrable versions exist for all loop $O(n)$ models (see \cite{N90,CaIk,Gl}), we limit ourselves here to $n=0$, that is to self-avoiding walk.

These variations provide a framework to analyse the universality phenomenon, {\em i.e.} that the properties of the model at criticality do not depend on the underlying lattice. Though believed to generally occur, the universality of critical exponents on isoradial graphs was established only for the Ising model~\cite{CheSmi12}, percolation~\cite{GriMan14} and the random-cluster model~\cite{DumLiMan15}. The current paper is the first step towards universality of the self-avoiding walk. 

Here we address the natural question of comparison between the properties of regular self-avoiding walk on the hexagonal lattice and those of weighted self-avoiding walk on a more general rhombic tiling. We show that in the half-plane, the 2-point function between points on the boundary is the identical in the weighted and regular models. A main tool in our proof, as well as in~\cite{GriMan14,DumLiMan15}, is the Yang--Baxter transformation discussed in Section~\ref{sec:YangBaxter}. 

Let us now define the model.
Consider a series of angles~$\Theta= \{\theta_k\}_{k\in\N}$, where~$\theta_k\in [\pi/3,2\pi/3]$ for all~$k$. Denote by~$\H(\Theta)$ the right half-plane tiled with columns of rhombi of edge-length $1$ in such a way that all rhombi in the~$k$-th column from the left have upper-left angle~$\theta_k$. We regard $\H(\Theta)$ as a plane graph, and call edges the sides of each rhombus;
we will refer to such graphs as rhombic tilings. 
Embed $\H(\Theta)$ so that the origin $0$ is the mid-point of a vertical edge of the boundary. 
Denote by $\S_{T}(\Theta)$ the strip consisting of the~$T$ leftmost columns of~$\H(\Theta)$. 

A self-avoiding walk on~$\H(\Theta)$ is a simple curve $\gamma$ starting and ending at midpoints of edges, intersecting edges at right angles and traversing each rhombus in one of the ways depicted in Fig.~\ref{figWeights}. The weight~$\weight_\Theta(\gamma)$ of a self-avoiding walk~$\gamma$ is the product of weights associated to each rhombus; for a rhombus of angle~$\theta$ the weight, depending on the configuration of arcs inside it, takes one of the six possible values: 1, $u_1(\theta)$, $u_2(\theta)$, $v(\theta)$, $w_1(\theta)$, $w_2(\theta)$ (see Fig.~\ref{figWeights} for the correspondence between the local pictures and the weights). These are explicit functions of $\theta$, given below. When it is clear which angles are considered, we will usually omit the subscript~$\Theta$ and write~$\weight(\gamma)$.




\begin{figure}
    \centering
    \begin{subfigure}{.7\textwidth}
      \centering
      \includegraphics[scale=0.85]{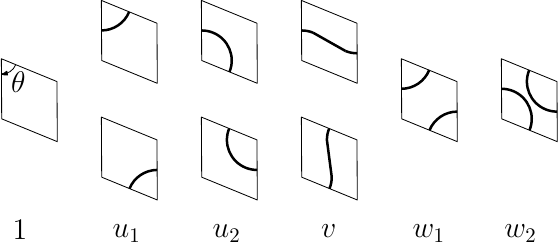}
    \end{subfigure}%
    \begin{subfigure}{.3\textwidth}
      \centering
      \includegraphics[scale=0.85,page=1]{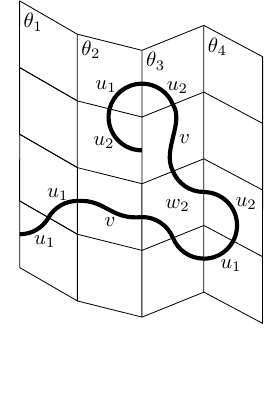}
    \end{subfigure}
    \caption{Different ways of passing a rhombus with their weights and an
        example of a walk of weight
        $u_1(\theta_1)^2 v(\theta_2)u_1(\theta_2)u_2(\theta_2)w_2(\theta_3)v(\theta_3)u_2(\theta_3)u_1(\theta_4)u_2(\theta_4)$ and length 
         $\frac{3}{\pi}\big[2\theta_1 + 3(\pi - \theta_3) + 7\big]$.}
    \label{figWeights}
\end{figure}

In 1990, Nienhuis~\cite{N90} computed the set of weights that are coherent with the Yang-Baxter equation for this model (see Section~\ref{sec:YangBaxter} for details).
These are:
\begin{align}\nonumber
    u_1&=\tfrac{\sin(\tfrac{5\pi}{4})\sin(\tfrac{5\pi}{8}+\tfrac{3\theta}{8})}{\sin(\tfrac{5\pi}{4}+\tfrac{3\theta}{8})\sin(\tfrac{5\pi}{8}-\tfrac{3\theta}{8})},&  
    u_2&=\tfrac{\sin(\tfrac{5\pi}{4})\sin(\tfrac{3\theta}{8})}{\sin(\tfrac{5\pi}{4}+\tfrac{3\theta}{8})\sin(\tfrac{5\pi}{8}-\tfrac{3\theta}{8})},&  
    v=\tfrac{\sin(\tfrac{5\pi}{8}+\tfrac{3\theta}{8})\sin(-\tfrac{3\theta}{8})}{\sin(\tfrac{5\pi}{4}+\tfrac{3\theta}{8})\sin(\tfrac{5\pi}{8}-\tfrac{3\theta}{8})}\\
    w_1&=\tfrac{\sin(\tfrac{5\pi}{8}+\tfrac{3\theta}{8})\sin(\tfrac{5\pi}{4}-\tfrac{3\theta}{8})}{\sin(\tfrac{5\pi}{4}+\tfrac{3\theta}{8})\sin(\tfrac{5\pi}{8}-\tfrac{3\theta}{8})},&
    w_2&=\tfrac{\sin(\tfrac{15\pi}{8}+\tfrac{3\theta}{8})\sin(-\tfrac{3\theta}{8})}{\sin(\tfrac{5\pi}{4}+\tfrac{3\theta}{8})\sin(\tfrac{5\pi}{8}-\tfrac{3\theta}{8})}.&
    \label{eq_w2c}
\end{align}
Notice that the weights above are all non-negative if and only if $\theta \in [\pi/3,2\pi/3]$. 
To have a probabilistic interpretation of the model, we limit ourselves to angles in this range. 
One may more generally define the model on any rhombic tiling, but certain walks may have negative weights (namely $w_1$ and $w_2$ are negative when $\theta > 2 \pi / 3$ and $\theta < \pi/3$, respectively). 

Henceforth, we always consider the weights listed above; the associated model will be referred to as the weighted self-avoiding walk. 
Replacing $\theta$  by $\pi-\theta$, effectively exchanges $u_1$ with $u_2$ and $w_1$ with $w_2$, but does not affect $v$. Hence, there is no ambiguity about which angles parametrise the rhombi. 

As explained in~\cite{Gl}, if $\theta = \pi/3$, then $w_2 = 0$ and $v = w_1 = u_2 = u_1^2$. Thus, any rhombus may be partitioned into two equilateral triangles, whose intersections with any walk is either void or one arc (see Fig. \ref{fig:split}). The weight generated by each rhombus may be computed as the product of two weights associated to the two triangles forming the rhombus, each contributing $1/{\sqrt{2 + \sqrt 2}}$ if traversed by an arc and $1$ otherwise. Thus, if $\Theta$ is the constant sequence equal to $\pi/3$, then each rhombus of $H(\Theta)$ may be partitioned into triangles, and $H(\Theta)$ becomes a triangular lattice (see Fig. \ref{fig:split}). The self-avoiding walk model described above becomes  that on the hexagonal lattice dual to the triangular one, with weight $1/(\sqrt{2 + \sqrt 2})^{|\gamma|}$ for any SAW $\gamma$ ($|\gamma|$ is the number of edges of $\gamma$). We call this the regular SAW, as it is the most common one. 

\begin{figure}
    \begin{center}
      \includegraphics[scale=0.8]{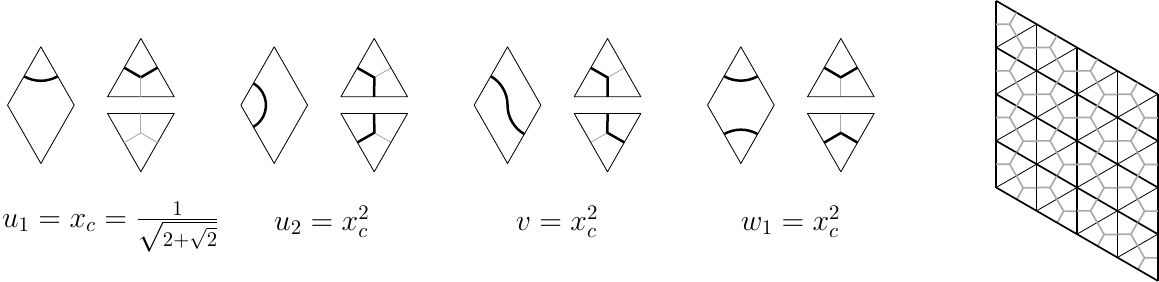}
    \end{center}
    \caption{A rhombus of angle $\pi/3$ is split into two equilateral triangles. 
    Any triangle contains at most one arc,  in which case it contributes $1/{\sqrt{2 + \sqrt 2}}$ to the weight.
    If all angles are equal to $\pi/3$, all faces of the rhombic tiling (bold black) maybe split into equilateral triangles, and walks may be viewed as regular self-avoiding walks on the hexagonal lattice (gray). 
}
    \label{fig:split}
\end{figure}

In~2009, Cardy and Ikhlef~\cite{CaIk} showed that for these weights, Smirnov's parafermionic observable (defined later in the text) is partially discretely holomorphic. Employing the original technics developed by Duminil-Copin and Smirnov~\cite{DCSmi10}, the first author generalised the calculation of the connective constant to the weighted self-avoiding walk \cite{Gl}. As a consequence, the weights \eqref{eq_w2c} may be considered critical for the weighted model. 


Given two points~$a$ and~$b$ with integer coordinates on the boundary of the right half-plane, the 2-point function between $a$ and $b$, denoted by~$G_\Theta(a,b)$, is the sum of weights of all walks from~$a$ to~$b$ on~$\H(\Theta)$ (see Fig.~\ref{fig:theta_tiling}):
\begin{align*}
G_\Theta(a,b) = \sum_{\gamma \text{ from $a$ to $b$}} \weight_\Theta(\gamma).
\end{align*}
By~$G_{\pi/3}(a,b)$ we denote the 2-point function when $\Theta$ is constant, equal to $\pi/3$. As mentioned above, this is the two point function of regular self-avoiding walk on a hexagonal lattice with edge-length~$1/\sqrt{3}$. 

\begin{theorem}\label{thm-saw-2-point}
Let~$\Theta= \{\theta_k\}_{k\in\N}$, where~$\theta_k\in [\pi/3,2\pi/3]$ for all~$k$. 
Then~$G_\Theta (a,b) = G_{\pi/3}(a,b)$ for any two points~$a$ and~$b$ on the boundary of the right half-plane.
\end{theorem}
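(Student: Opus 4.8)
The plan is to prove the statement by a sequence of elementary ``moves'' on the rhombic tiling, each of which changes the tiling but not the 2-point function, and which together allow one to pass from an arbitrary $\Theta$ to the constant sequence $\pi/3$. The engine behind these moves is the Yang--Baxter transformation from Section~\ref{sec:YangBaxter}: three rhombi meeting around a common vertex in a ``hexagonal'' pattern (angles $\theta_1,\theta_2,\theta_3$ with $\theta_1+\theta_2+\theta_3 = 2\pi$, equivalently a ``star'') can be replaced by the mirror configuration (a ``triangle''), and the sum of weights over all ways a walk enters and leaves this patch, with fixed endpoints on its boundary, is preserved. Since the weights \eqref{eq_w2c} are exactly those for which Yang--Baxter holds, such a local replacement leaves $G_\Theta(a,b)$ unchanged for endpoints outside the patch.

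Concretely, I would first record the \emph{commutation move}: given two adjacent columns of rhombi with angles $\theta_k$ and $\theta_{k+1}$, one can slide a rhombus of one column past the other, which amounts to swapping $\theta_k$ and $\theta_{k+1}$ in the sequence while keeping the half-plane combinatorially the same away from the affected strip. Iterating Yang--Baxter along a column realises this transposition, so the 2-point function is invariant under permuting the entries of $\Theta$ (at least finitely many of them, which suffices since $a,b$ and any walk between them are confined to a bounded strip $\S_T(\Theta)$). Second, I would use the Yang--Baxter move to \emph{flatten} angles toward $\pi/3$: a column of angle $\theta$ can be split, using the relations at $\theta=\pi/3$ recalled in the text (where $w_2=0$ and $v=w_1=u_2=u_1^2$, so a rhombus decomposes into two equilateral triangles), together with track-insertion/removal identities, to trade a $\theta$-column for additional $\pi/3$-columns without changing $G$. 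The precise bookkeeping is: any finite pattern of angles can be turned, via a finite sequence of YBE moves and track insertions at the right boundary (which do not affect walks between $a$ and $b$ since those walks live to the left), into the all-$\pi/3$ pattern on the relevant strip.

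The main obstacle I expect is \emph{boundary handling}: the Yang--Baxter move requires a full hexagonal neighbourhood around the central vertex, which fails at the left boundary of the half-plane where $a$ and $b$ sit. One must therefore either (i) show that the relevant walks never use the problematic boundary rhombi, so the moves can be performed ``one layer in,'' or (ii) supplement the bulk move with a dedicated boundary Yang--Baxter identity (a reflection-type relation) that handles a rhombus incident to the boundary edge carrying $a$ or $b$. I would aim for approach (i): since a walk from $a$ to $b$ enters the first column, any configuration of the leftmost rhombi that it touches can be rerouted by an interior YBE move after first using commutation to push a $\pi/3$-column (or the column to be modified) into the interior. Care is also needed that the moves terminate: one should set up a monovariant — e.g. a weighted count of angles differing from $\pi/3$, decreasing under each flattening step — to guarantee the procedure reaches the hexagonal lattice in finitely many steps on the strip $\S_T$ containing $a$, $b$.

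Finally, I would assemble the argument: fix $a,b$ on the boundary; choose $T$ large enough that every walk from $a$ to $b$ is contained in $\S_T(\Theta)$ (the half-plane being a disjoint union in the sense that walks cannot escape to $+\infty$ and return — more carefully, truncate and take a limit, or simply note that $G_\Theta(a,b)=G_{\S_T(\Theta)}(a,b)$ once $T$ exceeds the horizontal span forced by self-avoidance in finitely many columns, which here follows because the contribution of long walks is controlled); apply the sequence of commutation and flattening moves to transform $\S_T(\Theta)$ into the strip of the hexagonal lattice while holding $a$ and $b$ fixed and preserving the 2-point function at each step; conclude $G_\Theta(a,b)=G_{\pi/3}(a,b)$. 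The only genuinely non-routine point is verifying that all the local identities used are instances of (or easy consequences of) the Yang--Baxter relation established for the weights \eqref{eq_w2c}, which is precisely what Section~\ref{sec:YangBaxter} is expected to provide.
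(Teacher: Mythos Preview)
Your commutation move (swapping adjacent columns via Yang--Baxter) is correct and matches the paper's Proposition~\ref{prop:saw-YB-strip}. The fatal gap is the ``flattening'' step. The Yang--Baxter transformation rearranges three rhombi of given angles inside a hexagon; it never changes the multiset of angles present. Your proposal to ``trade a $\theta$-column for additional $\pi/3$-columns'' via YBE and ``track-insertion/removal identities'' is not an instance of Proposition~\ref{prop:YB}, and no such weight-preserving identity is established (or claimed) in Section~\ref{sec:YangBaxter}. The decomposition of a $\pi/3$-rhombus into two triangles is a one-way observation about that special angle, not a mechanism for converting a general $\theta$-column into $\pi/3$-columns while preserving the 2-point function. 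So your monovariant has nothing to act on: after permuting columns you are stuck with the same angles you started with.

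The paper circumvents this by \emph{not} trying to preserve the 2-point function at each step. It proves only an inequality, via a direct computation (Lemma~\ref{lem:saw-strip-monotonicity}): replacing the rightmost angle by $\pi/3$ can only decrease arc weights, hence $G_{\S_T(\Theta)}(a,b)\ge G_{\S_T(\pi/3)}(a,b)$. The matching upper bound comes from an entirely different source: the parafermionic relation \eqref{eq_relation_strip} gives $\sum_L G_{\S_T(\Theta)}(0,L)=A_{T,\Theta}\le(\cos\tfrac{3\pi}{8})^{-1}$, while Proposition~\ref{prop:saw-bridges-hex} (bridges vanish on the hexagonal lattice) forces $A_{T,\pi/3}\to(\cos\tfrac{3\pi}{8})^{-1}$. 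The squeeze $\sum_L G_\Theta(0,L)=\sum_L G_{\pi/3}(0,L)$ together with the termwise inequality then gives equality for each $L$. Your proposal misses both ingredients: the monotonicity lemma and the observable-based global constraint. (A secondary issue: your claim that some finite $T$ captures all walks from $a$ to $b$ is false---walks can wander arbitrarily far right---so limits in $T$ are genuinely needed, as the paper takes.)
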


\begin{figure}
    \begin{center}
    \includegraphics[scale=0.5,page=1]{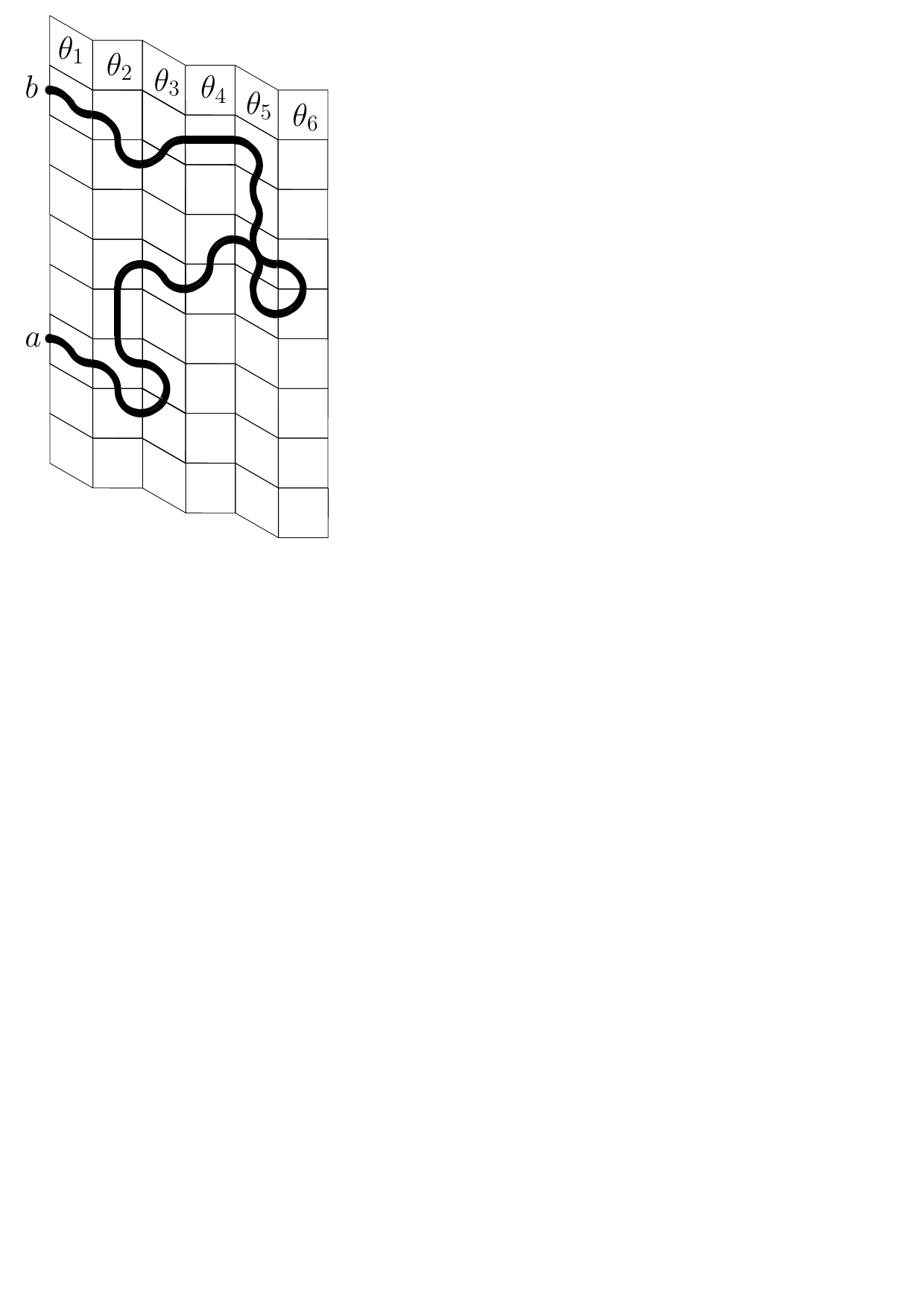}
    \hspace{40mm}
    \includegraphics[scale=0.5,page=2]{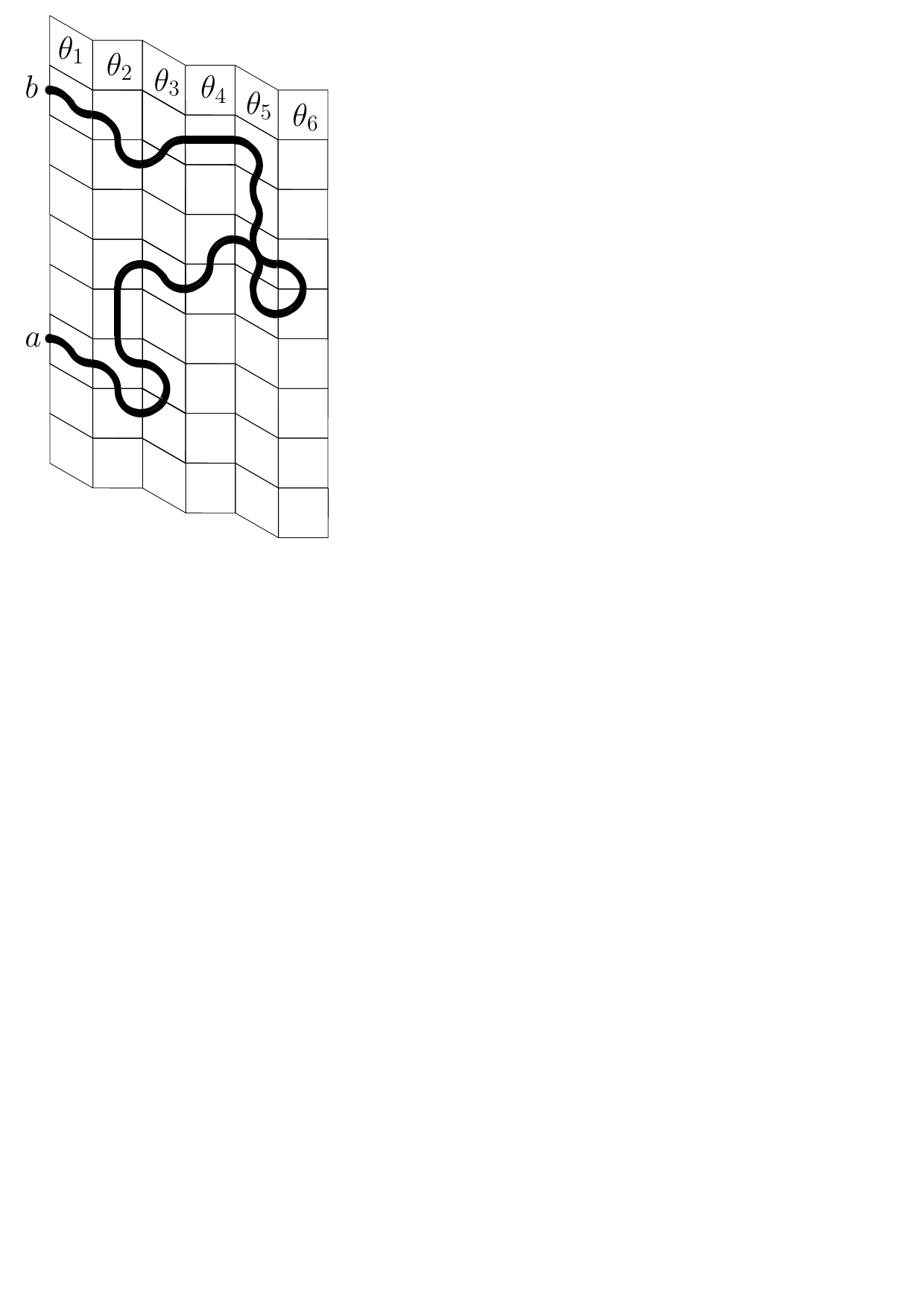}
    \caption{
    {\it Left:} a path contributing to the 2-point funtion~$G_{\Theta}(a,b)$.
    {\it Right:} a bridge contributing to~$B_{6}(\Theta)$. }
    \label{fig:theta_tiling}
    \end{center}
\end{figure}

A bridge of width~$T$ is a SAW on $\S_{T}(\Theta)$, starting at $0$ and  ending on the right boundary of~$\S_{T}(\Theta)$ (see Fig.~\ref{fig:theta_tiling}). The partition function of bridges of width~$T$ is
\begin{align}\label{eq-def-B-theta-t}
	B_{T,\Theta} = \sum_{\gamma:\text{ bridge in } \S_{T}(\Theta)}{\weight_\Theta(\gamma)}\,,
\end{align}
where the sum is taken over all bridges of width $T$.

For the SAW on the hexagonal lattice it was shown that the total weight of bridges in a strip tends to~0 as the width of the strip tends to infinity \cite[Thm.~10]{BBDDG}. We give a new, short proof of this statement which also yields a quantitative bound on the convergence. 
\begin{proposition}\label{prop:saw-bridges-hex}
    We have
    \begin{align}\label{eq:B_sum}
    	\sum_{T \geq 1}\frac1T \big(B_{T,\tfrac{\pi}{3}}\big)^3  < \infty.
    \end{align}
    As a consequence, 
    the partition function of  self-avoiding bridges on the hexagonal lattice vanishes at infinity:
    $B_{T,\tfrac{\pi}{3}} \xrightarrow[T \to \infty]{} 0$. Moreover $B_{T,\tfrac{\pi}{3}} < 1/(\log T)^{1/3}$ for infinitely many values of~$T$.
\end{proposition}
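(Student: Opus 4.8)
The plan is to deduce everything from the parafermionic observable $F$ of Duminil--Copin and Smirnov~\cite{DCSmi10}, used on the hexagonal lattice, that is, for $\Theta$ constant equal to $\pi/3$. Recall that $F$ assigns to a point $z$ the sum over self-avoiding walks $\gamma$ from $0$ to $z$ of $\weight(\gamma)\,e^{-5i\wind(\gamma)/8}$, and that summing over a domain the local relation satisfied by $F$ at each vertex produces, on the trapezoidal domains of~\cite{DCSmi10} and in the limit where their length goes to infinity, an identity of the shape $\cos(3\pi/8)\,A_T+B_T(\pi/3)=1$. Here $B_T(\pi/3)$ is the bridge partition function~\eqref{eq-def-B-theta-t}, while $A_T$ is the partition function of self-avoiding walks in $\S_{T}(\pi/3)$ starting at $0$ and returning to the left boundary of the strip at a point other than $0$. (The ``top and bottom'' term present in the finite identity vanishes in the limit because walks confined to $\S_{T}(\pi/3)$ have exponential growth rate at most that of the full lattice, so the total weight of those of length at least $L$ tends to $0$ as $L\to\infty$.) Since a walk counted by $A_T$ is also counted by $A_{T+1}$, the sequence $(A_T)$ is non-decreasing; the identity forces $A_T\le 1/\cos(3\pi/8)$, hence $A_T\uparrow A_\infty\le 1/\cos(3\pi/8)<\infty$ and $B_T(\pi/3)=1-\cos(3\pi/8)\,A_T$ is non-increasing. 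It therefore suffices to prove~\eqref{eq:B_sum}: if $\lim_T B_T(\pi/3)>0$ the series there diverges like $\sum 1/T$, so~\eqref{eq:B_sum} gives $B_T(\pi/3)\to0$; and if $B_T(\pi/3)\ge(\log T)^{-1/3}$ for all large $T$ the series diverges like $\sum\tfrac1{T\log T}$, so~\eqref{eq:B_sum} gives $B_T(\pi/3)<(\log T)^{-1/3}$ for infinitely many $T$.

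The core of the argument will be a concatenation estimate
\[
B_T(\pi/3)^3 \;\le\; C\,\bigl(A_{3T}-A_T\bigr)
\]
for a universal constant $C$. To prove it I would build, out of three bridges $\gamma_1,\gamma_2,\gamma_3$ of width $T$, a single self-avoiding walk $\omega$ in the right half-plane that starts at $0$, ends on the boundary elsewhere than $0$, stays within the first $3T$ columns, and reaches past the $T$-th column --- so that $\omega$ is counted by $A_{3T}$ but not by $A_T$ --- with the map $(\gamma_1,\gamma_2,\gamma_3)\mapsto\omega$ injective and $\weight(\omega)\ge c\,\weight(\gamma_1)\weight(\gamma_2)\weight(\gamma_3)$ for a universal $c>0$; summing over all triples then gives the displayed inequality. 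The construction I have in mind is a gluing: reflect $\gamma_2$ across the right boundary of $\S_{T}(\pi/3)$ and $\gamma_3$ across a horizontal line, then string the three pieces together with straight connectors, so that the outgoing part of $\omega$ runs rightwards across the first $2T$ columns and the returning part runs leftwards back to the boundary, disjoint from the outgoing part. I expect this to be the main obstacle. In the crude version the two halves of $\omega$ are separated by a vertical connector whose length is comparable to the vertical extents of the $\gamma_i$, which costs a factor decaying exponentially in those extents and spoils the bound $\weight(\omega)\ge c\,\weight(\gamma_1)\weight(\gamma_2)\weight(\gamma_3)$; so one needs a more economical routing --- interleaving the outgoing and returning portions, or exploiting the hexagonal geometry, or handling bridges of large vertical extent separately --- that avoids this penalty. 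One must also check that the accounting really produces the \emph{third} power of $B_T(\pi/3)$, and not, say, $B_T(\pi/3)^2B_{2T}(\pi/3)$.

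Granting the concatenation estimate, the conclusion follows by a summation over scales. Writing $A_{3T}-A_T=\sum_{T<k\le3T}(A_k-A_{k-1})$ with non-negative increments whose total is $\sum_k(A_k-A_{k-1})=A_\infty<\infty$, and exchanging the order of summation,
\[
\sum_{T\ge1}\frac1T\bigl(A_{3T}-A_T\bigr)\;=\;\sum_{k\ge2}(A_k-A_{k-1})\!\!\sum_{\lceil k/3\rceil\le T<k}\!\!\frac1T\;\le\;C'\sum_{k\ge2}(A_k-A_{k-1})\;\le\;C'A_\infty\;<\;\infty,
\]
since the inner sum over $T$ is at most $\log3+O(1)$. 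Combined with the concatenation estimate this yields $\sum_{T\ge1}\tfrac1T B_T(\pi/3)^3\le C\,C'A_\infty<\infty$, which is~\eqref{eq:B_sum}, whence $B_T(\pi/3)\to0$ and $B_T(\pi/3)<(\log T)^{-1/3}$ for infinitely many $T$, as noted above. The weight $\tfrac1T$ in~\eqref{eq:B_sum} is exactly what makes this final summation converge: it offsets the fact that a single walk $\omega$ reaching column $m$ contributes to $A_{3T}-A_T$ for every $T$ with $m/3\le T<m$.
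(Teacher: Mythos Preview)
Your overall architecture is right --- observable identity in the strip, monotonicity of $A_T$ (hence of $B_T$), a cubing inequality relating $B_T^3$ to an increment of arc partition functions, and a scale summation --- and your final summation argument is clean. But the concatenation step, which you yourself flag as the obstacle, is a genuine gap: you have not produced a map $(\gamma_1,\gamma_2,\gamma_3)\mapsto\omega$ with $\weight(\omega)\ge c\,\weight(\gamma_1)\weight(\gamma_2)\weight(\gamma_3)$, and the naive gluing you describe does fail for exactly the reason you give (the vertical connector costs exponentially in the bridges' extents, which are uncontrolled).

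The paper's resolution is not a cleverer routing in the strip but a change of domain. Inscribe an equilateral triangle $\Tri_L$ of side $2L+1$ in $\S_{2L+1}(\pi/3)$ with its vertical side on the left boundary, and let $D^\Delta_{2L+1}$ be the partition function of walks from $0$ to the two slanted sides. The observable on $\Tri_L$ gives $\cos(3\pi/8)A^\Delta_{2L+1}+\cos(\pi/8)D^\Delta_{2L+1}=1$, whence $D^\Delta$ is decreasing and $B_{2L+1}\le\cos(\pi/8)D^\Delta_{2L+1}$. The point of the triangle is that its slanted boundary makes angle $\pi/3$ with the vertical, so the hexagonal lattice's rotation by $\pi/3$ maps it onto another such boundary: a walk in $\Tri_L$ ending at distance $K_1$ along the top side can be continued, after rotation by $\pi/3$, by a walk in $\Tri_{K_1}$ ending at distance $K_2$, and then again by a walk in $\Tri_{K_2}$ --- with \emph{no connector whatsoever}. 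Placing the three rotated copies in the three $\pi/3$-sectors of the half-plane cut by $\arg z=\pm\pi/6$ yields a self-avoiding arc whose weight is exactly the product of the three. Summing over $K_1,K_2,K_3$ and using the monotonicity of $D^\Delta$ collapses the nested sums to $\tfrac18(D^\Delta_{4L})^3\le\sum_{k=L}^{9L}G_{\pi/3}(0,k)$, after which a base-$9$ summation (equivalent to yours) finishes. So the missing idea is geometric: trade the strip for a $60^\circ$ triangle so that the lattice symmetry lets three walks dock end-to-end for free.
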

The conjectural decrease of $B_T$ is much quicker than that implied by the above.  
Indeed it is expected that $B_T \sim T^{-1/4}$ as $T \to \infty$. For up to date numerical estimates on the asymptotics of $B_T$ see \cite[eq. (12)]{Gutt16}.

It is worth mentioning that the proof of Proposition~\ref{prop:saw-bridges-hex} uses certain symmetries of the hexagonal lattice (most notably the invariance under rotation by $\pi/3$). Hence this proof may not be applied directly to general rhombic tilings $H(\Theta)$. Nevertheless, using Theorem~\ref{thm-saw-2-point}, the part about convergence of~$B_T$ to zero can be extended to weighted self-avoiding walk on any rhombic tiling.

\begin{theorem}\label{thm-saw-bridges}
	Let~$\Theta= \{\theta_k\}_{k\in\N}$, where~$\theta_k\in [\pi/3,2\pi/3]$ for all~$k$. Then~$B_{T,\Theta} \xrightarrow[T\to \infty]{} 0$.
\end{theorem}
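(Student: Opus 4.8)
The plan is to deduce Theorem~\ref{thm-saw-bridges} from Theorem~\ref{thm-saw-2-point} and Proposition~\ref{prop:saw-bridges-hex} by relating the bridge partition function $B_{T,\Theta}$ in a strip to a boundary 2-point function, which is lattice-independent. The natural idea is that a bridge in $\S_T(\Theta)$ is a SAW that starts at $0$ on the left boundary and ends somewhere on the right boundary of the strip of width $T$; if we reflect the strip across its right boundary, such a bridge becomes "half" of a SAW in the strip of width $2T$ joining $0$ to its mirror image. More precisely, given a bridge $\gamma$ ending at a midpoint $b$ on the right boundary of $\S_T(\Theta)$, let $\bar\gamma$ be its reflection through the vertical line carrying the right boundary; then $\gamma \cup \bar\gamma$ is a SAW from $0$ to the reflection $\bar 0$ of $0$ inside $\S_{2T}(\bar\Theta\Theta)$, where $\bar\Theta\Theta$ is the symmetric sequence of angles obtained by mirroring. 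Summing over all bridges and using that the local weights of a rhombus are invariant under reflection (they depend only on the configuration of arcs and the angle $\theta$, and reflection preserves both — recall that $u_1\leftrightarrow u_2$, $w_1\leftrightarrow w_2$ under $\theta\mapsto\pi-\theta$, which is precisely what the mirrored column sees), one gets $\sum_b G^{\S_{2T}}_{\bar\Theta\Theta}(0,\bar 0_b) = (B_{T,\Theta})^2$ after grouping walks through their midpoint $b$ — some care is needed here since the decomposition of a walk $\delta$ from $0$ to $\bar 0$ into two bridges is not unique, so one obtains an inequality or must restrict to bridges crossing the central line exactly once.

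\textbf{Step by step.} First I would make the reflection construction precise: define $\bar\Theta$, verify that reflecting a rhombic tiling of a strip across a vertical edge produces again a valid rhombic tiling (of angles from the mirrored sequence), and check that $\weight$ is preserved under this reflection of walks. Second, I would show $B_{T,\Theta} = B_{T,\bar\Theta'}$ for the relevant reindexing, or more usefully bound $(B_{T,\Theta})^2$ above by a sum of half-plane 2-point functions: every bridge-pair $\gamma\cup\bar\gamma$ is a SAW in the half-plane $\H(\bar\Theta\Theta)$ from $0$ to $\bar 0$ (the strip constraint is automatically implied once we note the walk lives in columns $1,\dots,2T$), so
\begin{align*}
(B_{T,\Theta})^2 \leq \sum_{b} G_{\bar\Theta\Theta}\big(0, \rho_T(b)\big),
\end{align*}
where the sum is over the finitely many boundary midpoints $b$ of the right edge of $\S_T$ and $\rho_T$ is reflection. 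Third, by Theorem~\ref{thm-saw-2-point} each term equals $G_{\pi/3}(0,\rho_T(b))$, the hexagonal 2-point function between two boundary points, so $(B_{T,\Theta})^2 \leq \sum_b G_{\pi/3}(0,\rho_T(b))$. Fourth, the same reflection argument applied to the hexagonal lattice itself gives $\sum_b G_{\pi/3}(0,\rho_T(b)) \leq C\, T \,(B_T(\pi/3))^2$ or a similar polynomial-in-$T$ bound times a power of $B_T(\pi/3)$ — in any case, combining with Proposition~\ref{prop:saw-bridges-hex}, which guarantees $B_T(\pi/3)\to 0$, and absorbing the polynomial factor (using the quantitative decay $\sum_T \frac1T B_T(\pi/3)^3 <\infty$, hence $B_T(\pi/3) = o(T^{-1/3})$ along a subsequence, but in fact $B_T(\pi/3)\to 0$ monotonically-enough to kill any fixed power of $T$ is what must be checked), we conclude $B_{T,\Theta}\to 0$.

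\textbf{Main obstacle.} The delicate point is the bookkeeping that converts a \emph{two-point function in the half-plane} back into the \emph{square} (or a controlled power) of a \emph{bridge partition function}, in both directions and with matching normalisations. The reflection map is not a bijection between bridge-pairs and half-plane walks: a SAW from $0$ to $\bar 0$ need not be symmetric, and need not cross the central line only once, so one only gets inequalities, and one must make sure they point the right way. Concretely, $(B_{T,\Theta})^2$ counts \emph{ordered pairs} of bridges, which map injectively but not surjectively into symmetric SAWs, giving $(B_{T,\Theta})^2 \le \#\{\text{midpoints}\}\cdot \max_b G_{\bar\Theta\Theta}(0,\rho_T(b))$; on the hexagonal side one needs the reverse, $B_T(\pi/3) \ge c\cdot G_{\pi/3}(0, \rho_T(b))$ for a \emph{well-chosen} target $b$ (e.g. the midpoint directly opposite $0$, so that the straight concatenation is a genuine SAW), which requires checking that the "folding" of a half-plane walk along the central line produces a valid bridge of width $T$ without self-intersections — true because the walk is already contained in the strip. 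Managing the linear-in-$T$ loss from summing over the $O(T)$ boundary midpoints, against the decay rate from Proposition~\ref{prop:saw-bridges-hex}, is the crux; fortunately only \emph{some} decay of $B_T(\pi/3)$ is needed together with monotonicity-type or averaging arguments, so a crude bound suffices and the qualitative statement $B_{T,\Theta}\to 0$ follows without sharp control.
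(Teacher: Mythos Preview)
Your reflection construction has a structural gap that prevents Theorem~\ref{thm-saw-2-point} from being applied. When you reflect a bridge $\gamma:0\to b$ in $\S_T(\Theta)$ across the right boundary and concatenate with $\bar\gamma$, the resulting walk $\gamma\cup\bar\gamma$ runs from $0$, on the \emph{left} boundary of the doubled strip, to $\bar 0$, on its \emph{right} boundary. This is again a bridge (now in a strip of width $2T$), not an arc. But Theorem~\ref{thm-saw-2-point} concerns the two-point function $G_\Theta(a,b)$ only for points $a,b$ on the boundary of the half-plane, i.e.\ both on the \emph{left} vertical line; it says nothing about walks ending at interior or right-boundary points. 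Hence your step ``by Theorem~\ref{thm-saw-2-point} each term equals $G_{\pi/3}(0,\rho_T(b))$'' is unjustified: the quantity $G_{\bar\Theta\Theta}(0,\rho_T(b))$ you wrote down is not one the theorem controls. (A secondary issue: the right boundary of $\S_T(\Theta)$ is an infinite vertical line, so the sum over midpoints $b$ is not over ``finitely many'' points.) There is no evident way to repair this: reflecting across the left boundary instead produces a walk leaving the half-plane, and no other symmetry turns a bridge into an arc.

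The paper's proof sidesteps all of this by using the parafermionic identity
\[
\cos\tfrac{3\pi}{8}\,A_{T,\Theta}+B_{T,\Theta}=1
\]
(equation~\eqref{eq_relation_strip}), which links bridges directly to arcs in the \emph{same} strip. In the course of proving Theorem~\ref{thm-saw-2-point} it is established that $A_{T,\Theta}\to(\cos\tfrac{3\pi}{8})^{-1}$ as $T\to\infty$: one has $A_{T,\Theta}\geq A_{T,\pi/3}$ termwise, the hexagonal arc mass $A_{T,\pi/3}$ tends to $(\cos\tfrac{3\pi}{8})^{-1}$ by Proposition~\ref{prop:saw-bridges-hex}, and the identity itself gives the matching upper bound $A_{T,\Theta}\leq(\cos\tfrac{3\pi}{8})^{-1}$. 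Inserting this limit back into the identity yields $B_{T,\Theta}\to 0$ in one line, with no reflection, no polynomial-in-$T$ losses, and no need for any quantitative decay rate.
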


Our third result refers to self-avoiding walk with fugacity. 
Weighted self-avoiding walk with surface fugacity may be defined as was done in~\cite{BBDDG} for the regular model. 
In the half-plane, fugacity rewards (or penalises) walks whenever they approach the boundary by multiplying the weight by some~$y\ge 0$.  
Depending on the value of~$y$, a walk chosen with probability proportional to its weight will be either attracted to the boundary or repelled from it. The critical fugacity is the minimal $y$ such that self-avoiding walk with fugacity $y$ ``sticks'' to the boundary. 
This description is only illustrative, in fact the total weight of all self-avoiding walks in $\H(\Theta)$ is infinite \cite[Lemma 4.4]{Gl}, and no probability proportional to the weight exists.
A precise meaning of critical fugacity will be given below. 

In order to formally define critical fugacity, we deform the weight of a walk according to its length and its number of visits to the boundary.  
Let $\Theta = (\theta_k)_{k \geq 1}$ be a family of angles in $[\pi/3,2\pi/3]$ with $\theta_1 = \pi/3$. 
For a self-avoiding walk $\gamma$ on $\H(\Theta)$ define its length $|\gamma|$ as the sum of lengths of each arc, 
where the lengths of an arc spanning an angle $\theta$ is $\theta \frac{3}{\pi}$ and the length of any straight segment traversing a rhombus is 2. Notice that this definition is such that, when $\Theta$ is constant equal to $\pi/3$, the length of a walk is the number of edges in its representation on the hexagonal lattice. 
Further write $b(\gamma)$ for the number of times~$\gamma$ visits the leftmost column of rhombi as in Fig.~\ref{fig:CR}. More precisely, recall that each rhombus of the first column may be split into two equilateral triangles, each contributing to $\weight(\gamma)$ separately. Then $b(\gamma)$ is the number of visits of $\gamma$ to triangles adjacent to the boundary.


Given~$x,y\ge 0$, the $x$-deformed weight of a self-avoiding walk~$\gamma$ in~$\H(\Theta)$ with fugacity~$y$ is defined as
\begin{equation}\label{eq:weight-modified}
	\weight_\Theta (\gamma;x,y) = \weight_\Theta (\gamma)\cdot x^{|\gamma|} y^{b(\gamma)}.
\end{equation}
%

For~$x,y\geq 0$, the partition function of walks in~$\H(\Theta)$ with fugacity~$y$ is defined by:
\[
\SAW_\Theta (x,y)= \sum_{\gamma \text{ starts at } 0}{\weight_\Theta(\gamma; x,y)}\,.
\]
\begin{definition}
The critical fugacity~$y_c(\Theta)$ is the positive real number defined by
\begin{align}\label{eq:yc_def}
		y_c(\Theta) = \sup \big\{y \geq 0 \, | \, \forall 0<x<1, \SAW_\Theta (x,y) < \infty \big\}\,.
\end{align}
\end{definition}

In~\cite{BBDDG} it was proven that the critical fugacity for the regular self-avoiding on the hexagonal lattice is equal to~$1+\sqrt{2}$. We prove that the same is true for the self-avoiding walk with integrable weights, given that the rhombi in the first column are of angle~$\pi/3$. 

\begin{theorem}\label{thm-fugacity}
	Let~$\Theta= \{\theta_k\}_{k\in\N}$, where~$\theta_1= \pi/3$ and~$\theta_k\in [\pi/3,2\pi/3]$ for~$k>1$. 
	Then~$y_c (\Theta) = 1+\sqrt{2}$.
\end{theorem}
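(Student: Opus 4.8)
The plan is to transfer the known result $y_c(\pi/3) = 1+\sqrt 2$ on the hexagonal lattice to an arbitrary rhombic tiling by combining Theorem~\ref{thm-saw-2-point} with a parafermionic identity in a strip whose first column has angle $\pi/3$. Recall that the argument of~\cite{BBDDG} for the hexagonal lattice proceeds via the parafermionic observable in a strip (or a tilted rectangle) attached to the boundary: one writes down the discrete contour-integral identity satisfied by the observable on $\S_T$, groups the contributions according to where the walk exits (through the left boundary, the right boundary, or the top/bottom), and obtains a linear relation between the generating function of walks that return to the boundary, the generating function of bridges $B_{T,\Theta}$, and some remainder terms. The special value $y = 1+\sqrt 2$ is exactly the one that kills the boundary term in this relation, so that at $y = y_c$ the generating function of boundary walks stays bounded while for $y > 1+\sqrt 2$ it diverges.

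First I would set up the parafermionic observable for the weighted model on $\S_T(\Theta)$, using the complex weights from Cardy--Ikhlef~\cite{CaIk} that make it partially discretely holomorphic (this is exactly the observable underlying~\cite{Gl}). Since $\theta_1 = \pi/3$, the leftmost column splits into equilateral triangles and the boundary of $\S_T(\Theta)$ locally looks exactly like the hexagonal-lattice boundary; this is what makes the notion $b(\gamma)$ (visits to boundary triangles) and the fugacity weight $y^{b(\gamma)}$ well-defined and consistent with~\cite{BBDDG}. I would then sum the discrete holomorphicity identity over all rhombi of $\S_T(\Theta)$, so that the interior contributions telescope and only a boundary sum survives; the contributions along the left boundary come with phases that, for the critical weights, combine to give a relation of the form
\begin{equation*}
	\mathcal A_T(x,y) + c(y)\, \mathcal B_T(x,y) = x^{\text{something}}\, \mathcal E_T(x,y),
\end{equation*}
where $\mathcal A_T$ is the generating function (with the deformed weight) of walks in $\S_T(\Theta)$ ending on the left boundary, $\mathcal B_T$ is the generating function of bridges, $\mathcal E_T$ collects walks reaching the far side, and $c(y)$ is an explicit trigonometric factor vanishing precisely at $y = 1+\sqrt 2$. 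This is the direct analogue of the finite-strip identity in~\cite{BBDDG}, and since the phases attached to the $\theta_1 = \pi/3$ column are identical to the hexagonal ones, the coefficient $c(y)$ is the same function of $y$ as there.

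The two halves of the theorem then follow as in~\cite{BBDDG}. For $y < 1+\sqrt 2$: at $x \uparrow 1$ the bridge term is controlled because, by Theorem~\ref{thm-saw-bridges}, $B_{T,\Theta} \to 0$, and feeding this into the strip identity (letting $T \to \infty$) shows that $\mathcal A_\infty(x,y)$, hence $\SAW_\Theta(x,y)$, stays finite for all $x<1$; thus $y_c(\Theta) \ge 1+\sqrt 2$. For $y > 1+\sqrt 2$: the coefficient $c(y)$ now has the ``wrong'' sign, and the same identity forces $\mathcal A_T(x,y)$ to grow, yielding divergence of $\SAW_\Theta(x,y)$ for $x$ close to $1$ and hence $y_c(\Theta) \le 1+\sqrt 2$. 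Alternatively — and this is where Theorem~\ref{thm-saw-2-point} enters most cleanly — for the lower bound one can bypass a fresh parafermionic computation entirely: walks confined to the half-plane and touching the boundary only through angle-$\pi/3$ triangles have, by the Yang--Baxter transformation as in Theorem~\ref{thm-saw-2-point}, generating functions matching the hexagonal case column by column, so the finiteness of $\SAW_{\pi/3}(x,y)$ for $y<1+\sqrt2$ transfers directly to $\SAW_\Theta(x,y)$. I expect the main obstacle to be the bookkeeping of the boundary phases in the parafermionic sum away from the first column: one must check that the rhombi of angles $\theta_k \in [\pi/3, 2\pi/3]$ for $k > 1$ contribute nothing spurious to the boundary identity (they are in the bulk, so the holomorphicity relation should make their contributions cancel), and that the length functional $|\gamma|$ — defined via arc-angles $\theta\frac3\pi$ and straight segments of length $2$ — is exactly the one for which the weighted observable is holomorphic, so that $x^{|\gamma|}$ slots correctly into the telescoping. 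Handling the $x \uparrow 1$ limit rigorously (interchanging limits in $T$ and $x$, using monotonicity in $x$) is routine but must be done carefully, exactly as in~\cite{BBDDG}.
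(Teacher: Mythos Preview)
Your outline follows the right skeleton --- the parafermionic identity with fugacity and the special role of $y=1+\sqrt2$ --- but there are two genuine gaps, one in each inequality.

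\textbf{Lower bound.} You write that controlling $\mathcal A_\infty(x,y)$ (or $B_{T,\Theta}$) ``hence $\SAW_\Theta(x,y)$, stays finite''. This ``hence'' is false: $\mathcal A$ only counts walks that \emph{return to the left boundary}, whereas $\SAW_\Theta(x,y)$ sums over walks ending anywhere in the half-plane. The paper closes this gap by an explicit decomposition: for a general walk $\gamma$, let $\gamma^{(a)}$ be the portion up to the last visit of column~1 and $\gamma^{(w)}$ the remainder (which lives in $\H(\Theta')$, $\Theta'=(\theta_2,\theta_3,\dots)$, and feels no fugacity). Then $\gamma^{(a)}$ is split into two bridges in some $\S_T(\Theta)$, giving
\[
\SAW_\Theta(x,y)\ \le\ \delta\Big[\sum_{T\ge1}B_{T,\Theta}(x,y)\Big]^2\,\SAW_{\Theta'}(x,1).
\]
The bridge sum is finite because Corollary~\ref{cor:small_y} gives $B_{T,\Theta}(1,y)\le c(y)$ uniformly in $T$ for $y<1+\sqrt2$ (note: this is a bound with fugacity, not the $y=1$ statement $B_{T,\Theta}\to0$ from Theorem~\ref{thm-saw-bridges} that you invoke), hence $B_{T,\Theta}(x,y)\le c(y)x^T$. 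The factor $\SAW_{\Theta'}(x,1)$ is finite by~\cite{Gl}. Your ``alternative'' via Theorem~\ref{thm-saw-2-point} does not work either: that theorem concerns the two-point function $G_\Theta(a,b)$ between boundary points at $x=1$, not the full partition function $\SAW_\Theta(x,y)$.

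\textbf{Upper bound.} The parafermionic identity you set up lives at $x=1$: it can show (for $y>1+\sqrt2$) that $B_{T,\Theta}(1,y)$ or $A_{T,\Theta}(1,y)$ diverges, but the definition of $y_c(\Theta)$ requires divergence of $\SAW_\Theta(x,y)$ at some $x<1$. Passing from $x=1$ to $x<1$ is not ``routine'': the paper needs a Kesten-pattern-type lemma (Section~5.2) showing that in a strip, walks have a positive density of boundary visits, so that $\SAW_{T,\Theta}(x,x^{-C}y)\ge \SAW_{T,\Theta}(1,y)$. This, together with a Hammersley--Welsh decomposition (Proposition~\ref{prop:same_rad_conv}) identifying the radius of convergence of $B_{T,\Theta}(1,\cdot)$ with $y_c(T,\Theta)$, and a monotonicity comparison $y_c(T,\pi/3)\ge y_c(T,\Theta)$ (Lemma~\ref{lem:fug_monotone}, proved via the arc-weight monotonicity of Lemma~\ref{lem:saw-strip-monotonicity}, not via Theorem~\ref{thm-saw-2-point}), is what actually yields $y_c(T,\Theta)\to1+\sqrt2$ and hence $y_c(\Theta)\le1+\sqrt2$.
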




Let us briefly comment on the definition of the critical fugacity.
%
As already mentioned, the partition function of all walks with $x = y =1$ is infinite. Let~$x=1$ and~$y>$. Add one more column on the left and consider paths crossing only one rhomubs in it. The sum of their weigths is equal to~$v\cdot y$ times~$\SAW_\Theta (1,1)$, i.e. it is infinite.
This implies directly that $\SAW_\Theta (1, y) = \infty$ for all $y >0$.
For fixed $y > 0$, write 
$$x_c(y)  = \sup\big\{ x \geq 0 : \ \SAW_\Theta (x,y) < \infty \big\}.$$
This definition mimics that of the inverse connective constant for walks with fugacity. 

When $y = 1$, that is when no fugacity is added, we have $\SAW_\Theta (x,1) < \infty$ for all $x < 1$ (see \cite[proof of Thm. 1.1]{Gl}), 
which is to say $x_c(1) = 1$. 
The same is true for all $y < y_c(\Theta)$.
When $y > y_c(\Theta)$, it follows directly from the definition of the critical fugacity that $x_c(y) <1$. 

Thus, a fugacity is supercritical if it affects the value of the ``connective constant'' of the model. 
This is exactly the definition of critical fugacity used in \cite{BBDDG}; 
we have avoided it here because the connective constant for the weighted model does not appear naturally. 

One may also define the critical fugacity~$y_c(T, \Theta)$ for a strip~$\S_{T}(\Theta)$ in the same way, simply by replacing $\SAW_\Theta(x,y)$ in \eqref{eq:yc_def} with the partition function of weighted self-avoiding walks in $\S_T(\Theta)$:
\[
	\SAW_{T,\Theta}(x,y)  = \sum_{\gamma\,:\, \text{ walk in } \S_{T}(\Theta)}{\weight(\gamma;x,y)}.
\]

Define also the partition function of weighted bridges by
\[
	B_{T,\Theta}(x,y)  = \sum_{\gamma\,:\, \text{ bridge in } \S_{T}(\Theta)}{\weight(\gamma;x,y)}.
\]
We will consider the above for $x = 1$ as a series in $y$.

\begin{proposition}\label{prop:y_c_strip}
    Let~$\Theta= \{\theta_k\}_{k\in\N}$, where~$\theta_1= \pi/3$ and~$\theta_k\in [\pi/3,2\pi/3]$ for~$k>1$. 
    Then~$y_c(T,\Theta)$ is equal to the radius of convergence of~$B_{T,\Theta}(1;y)$, and
    $$y_c(T,\Theta)\xrightarrow[T\to \infty]{} y_c=1+\sqrt{2}.$$
\end{proposition}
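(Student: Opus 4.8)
The plan is to split the statement into two parts: first the identification of $y_c(T,\Theta)$ with the radius of convergence of $B_{T,\Theta}(1;y)$ for each fixed $T$, and then the convergence $y_c(T,\Theta)\to 1+\sqrt 2$ as $T\to\infty$.

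For the first part, I would use a standard concatenation argument specific to strips. Every self-avoiding walk $\gamma$ in $\S_T(\Theta)$ starting at $0$ can be decomposed by cutting it at successive records of its horizontal coordinate, expressing it as a concatenation of (reflected) bridges and ``half-bridges''; since the strip has finite width $T$, the weight and length of each piece is controlled, and the number of visits $b(\gamma)$ to the boundary column is additive over the pieces. This gives, for $x=1$, an inequality of the form $\SAW_{T,\Theta}(1,y) \leq C(T) \sum_{n\geq 0}\big(C'(T)\,B_{T,\Theta}(1;y)\big)^n$ up to polynomial corrections, and a matching lower bound $\SAW_{T,\Theta}(1,y)\geq B_{T,\Theta}(1;y)$; hence $\SAW_{T,\Theta}(1,y)<\infty$ iff $B_{T,\Theta}(1;y)<1$ (after the polynomial corrections are absorbed, one checks the radius of convergence is exactly where $B_{T,\Theta}(1;y)=1$, i.e.\ the radius of convergence of the series $B_{T,\Theta}(1;y)$ in $y$). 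One must also note, using \cite[proof of Thm.~1.1]{Gl} or a direct crude bound, that $\SAW_{T,\Theta}(x,y)<\infty$ for all $x<1$ whenever $\SAW_{T,\Theta}(1,y)<\infty$, so that the $\sup$ over $x$ in \eqref{eq:yc_def} is governed by the $x=1$ case. Since $\theta_1=\pi/3$, the first column is genuinely a strip of the hexagonal lattice, so $b(\gamma)$ and the length are the combinatorial quantities of \cite{BBDDG} and all the above is literally the argument there.

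For the convergence $y_c(T,\Theta)\to 1+\sqrt 2$, the key observation is a monotonicity/comparison between the strip and the half-plane. On one hand $y_c(T,\Theta)\geq y_c(\Theta)=1+\sqrt 2$ for every $T$ (by Theorem~\ref{thm-fugacity}), since walks in $\S_T(\Theta)$ are in particular walks in $\H(\Theta)$ and $\SAW_{T,\Theta}(x,y)\leq \SAW_\Theta(x,y)$; in fact, $B_{T,\Theta}(1;y)$ is non-decreasing in $T$ (a bridge in $\S_T$ is a bridge in $\S_{T+1}$), so $y_c(T,\Theta)$ is non-increasing in $T$ and the limit $y_\infty:=\lim_T y_c(T,\Theta)$ exists and satisfies $y_\infty\geq 1+\sqrt 2$. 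For the reverse inequality, suppose $y<y_\infty$. Then $B_{T,\Theta}(1;y)<1$ for all $T$, and moreover $B_{T,\Theta}(1;y)$ is non-decreasing in $T$ and bounded by $1$, hence converges; I would argue, via the half-plane concatenation of bridges (each half-plane walk starting at $0$ decomposes into a sequence of bridges of increasing widths, à la Hammersley--Welsh), that $\sup_T B_{T,\Theta}(1;y)<1$ forces $\SAW_\Theta(1,y)<\infty$ up to the usual $x<1$ correction, whence $y\leq y_c(\Theta)=1+\sqrt 2$; letting $y\uparrow y_\infty$ gives $y_\infty\leq 1+\sqrt 2$. An alternative, cleaner route is to invoke Theorem~\ref{thm-saw-bridges}: it gives $B_{T,\Theta}(1;1)=B_{T,\Theta}\to 0$, hence $y_c(T,\Theta)>1$ eventually, but more importantly one can combine $B_{T,\Theta}\to 0$ with a Hammersley--Welsh-type bound $B_{T,\Theta}(1;y)\leq B_{T,\Theta}(1;1)\cdot(\text{something})$... — here one must be careful since $y>1$ increases weights. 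So I would rather stick with the monotone-limit argument and the half-plane bridge decomposition.

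The main obstacle I anticipate is the reverse inequality $y_\infty\leq 1+\sqrt 2$, i.e.\ transferring the finiteness of $\sup_T B_{T,\Theta}(1;y)$ below $1$ into finiteness of the half-plane partition function $\SAW_\Theta(1,y)$. The subtlety is that a self-avoiding walk in the half-plane is not confined to any strip, so one genuinely needs a Hammersley--Welsh-style bridge decomposition (irreducible bridges, partition into at most $\sqrt{n}$-many ``pieces'' of alternating orientation) adapted to the weighted model, and one must verify that with fugacity $y>1$ the extra weight $y^{b(\gamma)}$ does not spoil the subexponential control of the number of pieces — it does not, because $b(\gamma)$ is bounded by the length, and the combinatorial Hammersley--Welsh bound applies to the unweighted count. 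Everything else (the monotonicity in $T$, the lower bound $y_c(T,\Theta)\geq 1+\sqrt 2$, and the strip concatenation of the first part) is routine once Theorems~\ref{thm-fugacity} and~\ref{thm-saw-bridges} are in hand.
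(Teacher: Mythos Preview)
Your proposal has two genuine gaps.

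\textbf{Circularity.} You invoke Theorem~\ref{thm-fugacity} to obtain the lower bound $y_c(T,\Theta)\ge y_c(\Theta)=1+\sqrt 2$. But in this paper, the proof of Theorem~\ref{thm-fugacity} \emph{uses} Proposition~\ref{prop:y_c_strip} (specifically, to find a $T$ with $y>y_c(T,\Theta)$ once $y>1+\sqrt 2$). So your argument is circular as stated. The paper instead obtains the lower bound directly from Corollary~\ref{cor:small_y}: for every $y<1+\sqrt2$ one has $B_{T,\Theta}(1,y)\le \tfrac{\sqrt2\,y}{1+\sqrt2-y}<\infty$, uniformly in $T$. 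This immediately shows that the radius of convergence of $B_{T,\Theta}(1,\cdot)$ is at least $1+\sqrt 2$, for every $T$; no half-plane result is needed.

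\textbf{False monotonicity.} You assert that ``$B_{T,\Theta}(1;y)$ is non-decreasing in $T$ (a bridge in $\S_T$ is a bridge in $\S_{T+1}$)''. This is incorrect: a bridge in $\S_T$ ends on the \emph{right} boundary of $\S_T$, which is not the right boundary of $\S_{T+1}$; such a walk is not a bridge in $\S_{T+1}$. The paper obtains monotonicity of $y_c(T,\Theta)$ in $T$ through \emph{arcs}, not bridges: $A_{T-1,\Theta}(1,y)\le A_{T,\Theta}(1,y)$ trivially (Lemma~\ref{lem:fug_monotone}(ii)), and since arcs and bridges have the same radius of convergence (Proposition~\ref{prop:same_rad_conv}), this suffices.

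For the upper bound $\lim_T y_c(T,\Theta)\le 1+\sqrt 2$, your proposed Hammersley--Welsh half-plane decomposition would again be reproving Theorem~\ref{thm-fugacity}. The paper takes a cleaner route: Lemma~\ref{lem:fug_monotone}(i) gives $y_c(T,\Theta)\le y_c(T,\pi/3)$ via the Yang--Baxter comparison of Lemma~\ref{lem:saw-strip-monotonicity} (which leaves the first column, and hence the fugacity count, untouched), and then one quotes the known hexagonal result $y_c(T,\pi/3)\to 1+\sqrt2$ from \cite{BBDDG}. Your first part (identifying $y_c(T,\Theta)$ with the radius of convergence of $B_{T,\Theta}(1;\cdot)$) is essentially the same as the paper's Propositions~\ref{prop:same_rad_conv} and~\ref{prop:fug_2_def_strip}, though note that ``same radius of convergence'' is the correct statement, not the threshold $B=1$ you briefly suggest.
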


\subsection*{Acknowledgements}
The authors would like to thank Hugo Duminil-Copin for pointing out that the strategy used in Proposition \ref{prop:saw-bridges-hex} yields an explicit bound on $B_T$. 

The first author was supported by Swiss NSF grant P2GE2\_165093, and  partially supported by the European Research Council starting grant 678520 (LocalOrder); he is grateful to the university of Tel-Aviv for hosting him. 
The second author is member of the NCCR SwissMAP. 
Part of this work was performed at IMPA (Rio de Janeiro) and at the University of Geneva, we are grateful to both institutions for their hospitality.

\section{Parafermionic observable}\label{sec:parafermion}

To analyse the behaviour of the self-avoiding walk  we will use the~\emph{para\-fermionic observable} introduced by Smirnov in~\cite{Smi_Ising} and its modification to incorporate fugacity introduced in~\cite{BBDDG}. The contour integral of this observable around each rhombus vanishes everywhere except for the part of the boundary where the surface fugacity is inserted. This leads to relations between the partition functions of arcs and bridges that are crucial for our proof.

\subsection{Observable without fugacity}

Fix a sequence $\Theta$ as before.  
The rows of rhombi of~$\H(\Theta)$ and~$\S_T(\Theta)$ may be numbered in increasing order by $\Z$, with the row $0$ containing the origin on its left boundary. 
Let~$\Rect_{T,L}(\Theta)$ be the rectangular-type domain consisting of the rows $-L,\dots, L$ of~$\S_T(\Theta)$ 
(see Fig.~\ref{fig:CR}).
Denote by $V(\Rect_{T,L}(\Theta))$ the set of all midpoints of the sides of the rhombi in $\Rect_{T,L}(\Theta)$ 
and by $V(\partial\Rect_{T,L}(\Theta))$ the points of~$V(\Rect_{T,L}(\Theta))$ lying on edges of~$\partial\Rect_{T,L}(\Theta)$ (that is edges of~$\Rect_{T,L}(\Theta)$ which are only adjacent to one rhombus of $\Rect_{T,L}(\Theta)$). Notice that the embedding ensures that $0 \in V(\partial \Rect_{T,L}(\Theta))$.

\begin{figure}
    \begin{center}
    \includegraphics[scale=0.75,page=1]{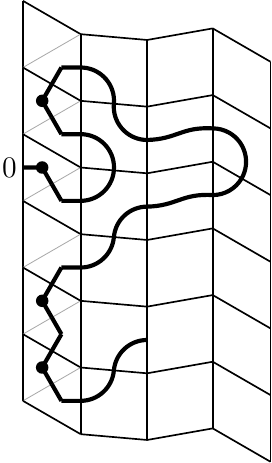}
    \hskip15mm
    \includegraphics[scale=0.8]{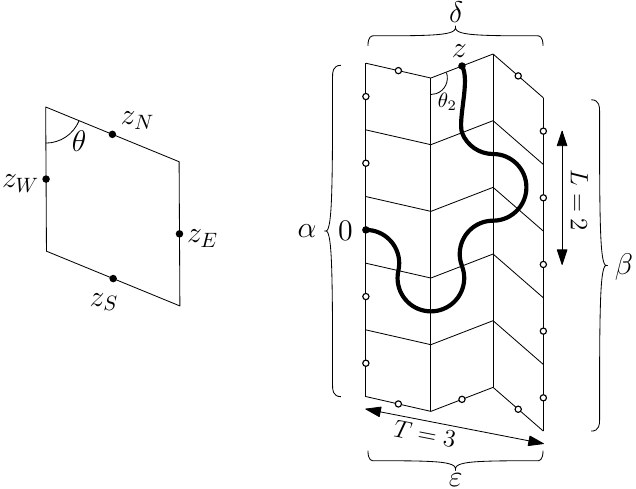}
    \caption{{\it Left:} When $\theta_1 =\pi/3$, the rhombi of the first column may be split into two equilateral triangles. Only visits to the triangles adjacent to the boundary (marked by dots) are counted in $b(\gamma)$. Here $b(\gamma) = 4$.
    {\it Middle:} A rhombus of angle~$\theta$ and mid-edges~$z_E,z_S,z_N,z_W$.
    {\it Right:} The domain $\Rect_{3,2}(\Theta)$ with the mid-points of boundary edges marked. 
    In bold: a path starting at $0$ and ending at a point in $z\in\delta$; its winding is $\theta_2$, as for any path ending at this point.}
    \label{fig:CR}
    \end{center}
\end{figure}

The parafermionic observable in the domain $\Rect_{T,L}(\Theta)$ (with no fugacity) is the function~$F$ defined on~$V(\Rect_{T,L}(\Theta))$ by
\begin{align}\label{def-parafermion}
	F(z)=\sum_{\gamma:0\to z}{\weight(\gamma)e^{-i\cdot \tfrac{5}{8}\cdot \wind(\gamma)}}\qquad \forall z\in V(\Rect_{T,L}(\Theta)),
\end{align}
where the sum runs over all self-avoiding walks $\gamma$ contained in $\Rect_{T,L}(\Theta)$, starting at~$0$ and ending at~$z$. Above, $\wind(\gamma)$ denotes the winding of~$\gamma$, i.e. the total angle of rotation of~$\gamma$ going from~$0$ to~$z$ (recall that a walk crosses the sides of rhombi at right angles). For instance, the arc from~$z_W$ to~$z_N$ in Fig.~\ref{fig:CR} has winding~$\theta$ and the arc from~$z_W$ to~$z_S$ has winding~$\theta - \pi$. 
Since $\Rect_{T,L}(\Theta)$ is a finite region, the sum in the definition of $F$ is finite, hence well-defined. 

The value~$5/8$ is chosen to render the contour integrals of $F$ null. 
It is specific to the self-avoiding walk model; 
similar observables exist for other models, where $5/8$ should be replaced with different values, see~\cite{Smi_Icm06,CaIk} and~\cite{DCSmi11} for a survey.

The {\em partial discrete holomorphicity} stated in the next lemma is a crucial property of the parafermionic observable. The word {\em partial} here refers to the fact that Eq.~\eqref{eq-CR} below can be viewed as the property that the contour integral around each rhombus vanishes, though the analogous property around each vertex does not hold.
The parafermionic observable was first introduced by Smirnov for the FK-Ising model~\cite{Smi_Ising}, where it satisfies stronger relations and in particular the contour integral around each face and around each vertex vanishes. In the FK-Ising model this observable used to prove the convergence of interfaces to~$\sle$ curves. Later, partial discrete holomorphicity was proved in case of the loop~$O(n)$ model~\cite{DCSmi10} on the hexagonal lattice and for the more general loop~$O(n)$ model with integrable weights~\cite{CaIk}. 
Here we state the partial discrete holomorphicity in the form given in~\cite[Lemma 3.1]{Gl}.

\begin{lemma}\label{lem-CR}
    The parafermionic observable~$F$ satisfies the following relation for each rhombus of~$\Rect_{T,L}(\Theta)$:
    \begin{align}\label{eq-CR}
   		F(z_E)-F(z_W)=e^{i\theta}(F(z_S)-F(z_N)),
    \end{align}
    where~$z_E$, $z_S$, $z_W$ and~$z_N$ are the midpoints of the edges of the rhombus, distributed as in Fig.~\ref{fig:CR}.
\end{lemma}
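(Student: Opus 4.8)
The plan is to establish the identity \eqref{eq-CR} by a local combinatorial argument: expand each of the four terms $F(z_E), F(z_W), F(z_S), F(z_N)$ as sums over walks, and show that the contributions organize into cancelling pairs, up to the complex phases coming from winding. First I would fix one rhombus $R$ of $\Rect_{T,L}(\Theta)$ with its four mid-edges $z_E,z_S,z_W,z_N$ as in Fig.~\ref{fig:CR} and angle $\theta$. For each of these four mid-edges $z$, a walk $\gamma:0\to z$ contained in $\Rect_{T,L}(\Theta)$ either does or does not enter $R$. The key observation is that since $z$ is a mid-edge of $R$ and $\gamma$ ends at $z$, the last arc of $\gamma$ is forced to pass through $R$ (there is essentially one arc incident to $z$ inside $R$); so in fact every such $\gamma$ terminates with a prescribed arc inside $R$. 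One then classifies walks by the number of arcs they have inside $R$: either exactly one (the terminal one) or exactly three (since the walk is self-avoiding and crosses each edge at most once, it can use at most three of the four mid-edges of $R$, one of them being the endpoint $z$). Writing a walk as $\gamma = \gamma' \oplus (\text{arcs in } R)$, where $\gamma'$ is the portion before the first entry into $R$ and ends at one of the mid-edges of $R$, the weight factorises as $\weight(\gamma) = \weight(\gamma')\,\weight_R(\cdot)$, where $\weight_R(\cdot)$ is a product of the local weights $1,u_1,u_2,v,w_1,w_2$ from \eqref{eq_w2c}, and the winding factorises additively: $\wind(\gamma) = \wind(\gamma') + (\text{winding accumulated in } R)$.

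Next I would group the walks according to the mid-edge $p\in\{z_E,z_S,z_W,z_N\}$ at which $\gamma'$ first meets $R$, and according to which configuration of arcs inside $R$ is used. For each fixed $\gamma'$ ending at a given $p$, the contributions to the four quantities $F(z_E)-F(z_W)-e^{i\theta}F(z_S)+e^{i\theta}F(z_N)$ come from completing $\gamma'$ by the finitely many admissible arc-patterns inside $R$ compatible with entering at $p$. The heart of the proof is then the purely local identity: for each entry mid-edge $p$, the sum over admissible completions of the signed, phase-weighted local contributions vanishes. Concretely this reduces to a handful of trigonometric identities among $u_1(\theta),u_2(\theta),v(\theta),w_1(\theta),w_2(\theta)$ and the phases $e^{\pm i\cdot\frac58\theta}$, $e^{\pm i\cdot\frac58(\pi-\theta)}$, $e^{i\theta}$ (the windings of the various arcs through a rhombus of angle $\theta$ are $\pm\theta$ and $\pm(\pi-\theta)$, scaled by the observable's exponent $5/8$). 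These identities are exactly what Nienhuis's weights \eqref{eq_w2c} were engineered to satisfy, so this step is a finite verification.

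The main obstacle is bookkeeping rather than depth: one must enumerate correctly all the local arc-configurations in $R$ that can arise as the suffix of a walk ending at each of $z_E,z_S,z_N,z_W$, keep careful track of orientations/windings (including the sign conventions so that, e.g., $F(z_S)$ and $F(z_N)$ enter with the correct signs and the factor $e^{i\theta}$ appears), and check that the walks with one arc in $R$ and those with three arcs in $R$ are matched up consistently without double-counting — in particular that a ``short'' completion at one mid-edge is cancelled against a ``long'' (three-arc) completion at another. Once the configurations are tabulated, verifying the trigonometric cancellations is routine given \eqref{eq_w2c}. Since this is precisely \cite[Lemma~3.1]{Gl}, I would in fact cite that reference for the detailed case analysis and only indicate the structure of the argument here.
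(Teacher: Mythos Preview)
The paper itself does not prove this lemma; it simply states it and cites \cite[Lemma~3.1]{Gl}. So your closing suggestion to cite that reference is exactly what the paper does.

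Your sketch of the underlying argument, however, contains a real error in the decomposition. It is not true that a walk $\gamma$ ending at a mid-edge $z$ of $R$ must have its last arc inside $R$: the walk can reach $z$ through the rhombus on the \emph{other} side of the edge carrying $z$, so $\gamma$ may have no arc in $R$ at all. More seriously, grouping by ``the prefix $\gamma'$ up to the first entry into $R$'' does not reduce the problem to a finite local computation: after first entering $R$ the walk may exit, wander arbitrarily in $\Rect_{T,L}(\Theta)\setminus R$, and return (so the ``completion'' is not confined to $R$). The correct grouping, as in \cite{DCSmi10,Gl,CaIk}, is by the configuration of $\gamma$ \emph{outside} $R$: remove from $\gamma$ every arc lying in $R$; what remains is either a single walk from $0$ to one mid-edge of $R$, or such a walk together with a disjoint excursion joining two other mid-edges of $R$. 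For each fixed external configuration there are only finitely many ways to fill in $R$ (with $0$, $1$ or $2$ arcs --- the configurations of Fig.~\ref{figWeights}; a rhombus has four mid-edges, so ``three arcs'' is impossible, and a walk ending at $z$ can in fact use all four mid-edges) so as to produce a walk terminating at one of $z_E,z_W,z_N,z_S$. The weighted, phase-corrected contributions of these fillings then cancel, and that cancellation is the local trigonometric identity you correctly identify as being built into the weights~\eqref{eq_w2c}.
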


Equation~\eqref{eq-CR} is reminiscent of the Cauchy--Riemann equations for holomorphic functions;
it may also be written as the contour integral of $F$ around any rhombus being null. 
Summing the real part of~\eqref{eq-CR} over all rhombi in a particular domain yields a relation on the partition function analogous to that of~\cite{DCSmi10}[Lemma~2].  Denote the left, right, up and bottom boundaries of~$\Rect_{T,L}(\Theta)$ by~$\alpha$, $\beta$, $\delta$ and $\varepsilon$, respectively. We will use the following notation:
\begin{align}
	\label{eq_def_AB}
    A_{T,L,\Theta}&=\sum_{\gamma:0\to z\in \alpha}{\weight(\gamma)}\,,
    &B_{T,L,\Theta}&=\sum_{\gamma:0\to z\in \beta}{\weight(\gamma)}\,,\\
    \label{eq_def_DE}
    D_{T,L,\Theta}&=\sum_{\gamma:0\to z\in \delta}{\cos(\tfrac{3}{8}\wind(\gamma))\weight(\gamma)}\,,
    &E_{T,L,\Theta}&=\sum_{\gamma:0\to z\in \varepsilon}{\cos(\tfrac{3}{8}\wind(\gamma))\weight(\gamma)}\,.
\end{align}
The sums run over all self-avoiding walks in $\Rect_{T,L}(\Theta)$ ending at a point in $\alpha$, $\beta$, $\delta$ and $\varepsilon$, respectively.
The paths contributing to $A_{T,L,\Theta}$ are called (self-avoiding) arcs.


\begin{lemma}[Lem. 4.1 \cite{Gl}]\label{lem_eq_rectangle}
	For any sequence~$\Theta= \{\theta_k\}_{k\in\N}$ of angles between~$\tfrac{\pi}{3}$ and~$\tfrac{2\pi}{3}$,
    \begin{align}\label{eq_relation_rectangle}
    	\cos{\tfrac{3\pi}{8}}A_{T,L,\Theta} + B_{T,L,\Theta} + D_{T,L,\Theta} + E_{T,L,\Theta} = 1\,.
    \end{align}
\end{lemma}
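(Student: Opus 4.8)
The plan is to sum the Cauchy--Riemann-type relation \eqref{eq-CR} of Lemma~\ref{lem-CR} over all rhombi of $\Rect_{T,L}(\Theta)$ and take real parts, so that the left-hand side becomes a ``discrete contour integral'' along $\partial\Rect_{T,L}(\Theta)$. Rewriting \eqref{eq-CR} as $F(z_E)-F(z_W)-e^{i\theta}F(z_S)+e^{i\theta}F(z_N)=0$, the coefficients attached to the mid-edges $z_E,z_W,z_S,z_N$ of a rhombus of angle $\theta$ are $+1,-1,-e^{i\theta},+e^{i\theta}$. Adding these identities over all rhombi, I would first check that every mid-edge lying strictly inside $\Rect_{T,L}(\Theta)$ drops out: a vertical interior edge is the East mid-edge of one rhombus and the West mid-edge of its right neighbour, contributing $+F$ and $-F$ with matching unit coefficients; a slanted interior edge is the North mid-edge of one rhombus and the South mid-edge of the rhombus directly above it, which lies in the \emph{same column} and hence has the \emph{same angle} $\theta_k$, so the coefficients $+e^{i\theta_k}$ and $-e^{i\theta_k}$ cancel as well. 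This is precisely where the column structure of $\Theta$ is used. Hence the sum reduces to the contributions of the four sides $\alpha,\beta,\delta,\varepsilon$ together with the mid-edge $0$.

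Next I would evaluate each boundary contribution using the geometric fact recorded in the caption of Fig.~\ref{fig:CR}: every self-avoiding walk $\gamma\colon 0\to z$ inside the simply connected domain $\Rect_{T,L}(\Theta)$ has the same winding, depending only on $z$. Since $0$ is the mid-point of a vertical boundary edge, a walk starts heading East, so: a walk ending on the right side $\beta$ (a vertical edge, leaving the domain due East) has winding $0$; a walk ending on the left side $\alpha$ (vertical edge, leaving due West) has winding $\pm\pi$; a walk ending on the top side $\delta$ at a rhombus of column $k$ leaves perpendicularly to a slanted edge whose outward normal makes angle $\theta_k$ with the horizontal, hence has winding $\theta_k$; and symmetrically a walk ending on the bottom side $\varepsilon$ at a column-$k$ rhombus has winding $\theta_k-\pi$. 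Substituting $\wind(\gamma)$ into $F(z)=\sum_{\gamma:0\to z}\weight(\gamma)e^{-\frac{5i}{8}\wind(\gamma)}$, multiplying by the edge coefficient above and taking real parts, a short trigonometric computation — using $\cos\frac{5\pi}{8}=-\cos\frac{3\pi}{8}$ for $\alpha$, and $\Re\big(e^{i\theta_k}e^{-\frac{5i}{8}\theta_k}\big)=\cos\frac{3}{8}\theta_k$, $-\Re\big(e^{i\theta_k}e^{-\frac{5i}{8}(\theta_k-\pi)}\big)=\cos\frac{3}{8}(\theta_k-\pi)$ for $\delta$ and $\varepsilon$ — shows that the sides $\beta,\alpha,\delta,\varepsilon$ contribute exactly $B_{T,L,\Theta}$, $\cos\frac{3\pi}{8}\,A_{T,L,\Theta}$, $D_{T,L,\Theta}$, $E_{T,L,\Theta}$, matching the definitions \eqref{eq_def_AB}--\eqref{eq_def_DE}.

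Finally, the mid-edge $0$ is the West mid-edge of the leftmost rhombus of row $0$; the only walk ending at $0$ is the trivial one, so $F(0)=1$ and its contribution to the sum is $-F(0)=-1$. Since the whole sum equals $0$, moving this term to the right-hand side yields $\cos\frac{3\pi}{8}\,A_{T,L,\Theta}+B_{T,L,\Theta}+D_{T,L,\Theta}+E_{T,L,\Theta}=1$, which is \eqref{eq_relation_rectangle}. The one genuinely non-formal ingredient — and the step I expect to be the main obstacle — is the claim that the winding of a self-avoiding walk from $0$ to a given boundary mid-edge is determined by the endpoint alone: this is a planar-topology statement relying on simple connectivity of $\Rect_{T,L}(\Theta)$ together with self-avoidance (closing up the walk with the appropriate boundary arc produces a Jordan curve, which pins down the total rotation), and it takes a little care to rule out a spurious extra $2\pi$. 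Everything else is the bookkeeping of the interior cancellations and the elementary trigonometry above.
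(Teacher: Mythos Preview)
Your proposal is correct and follows exactly the approach sketched in the paper (which quotes the full proof from \cite{Gl}): sum the real parts of the Cauchy--Riemann relation \eqref{eq-CR} over all rhombi, use the column structure for the interior cancellations, and identify the four boundary contributions with $A,B,D,E$ via the fixed winding at each boundary mid-edge, the empty walk at $0$ producing the constant $1$. Your worry about the winding being endpoint-determined is a standard planar-topology fact for self-avoiding curves in a simply connected domain and is used freely in the paper (see the caption of Fig.~\ref{fig:CR} and the proof of Corollary~\ref{cor:small_y}); it is not an obstacle here.
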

The factor $1$ on the right-hand side of \eqref{eq_relation_rectangle} comes from the contribution to $F$ of the empty configuration, which is not accounted for in any of the terms on the left-hand side. In~\cite[Lem. 4.1]{Gl}, the case of a constant angle is considered. Here we are dealing with a general case and thus the factor~$\cos(\tfrac{3}{8}\wind(\gamma))$ appears in the definition of~$D_{T,L,\Theta}$ and~$E_{T,L,\Theta}$. However, the proof can be adapted mutatis mutandis and we do not give further details.

Write $A_{T,\Theta}$ and $ B_{T,\Theta}$ for the partition functions of arcs and bridges, respectively, in $\S_{T}(\Theta)$.

\begin{corollary}
	For any sequence~$\Theta= \{\theta_k\}_{k\in\N}$ of angles between~$\tfrac{\pi}{3}$ and~$\tfrac{2\pi}{3}$ and any $T \geq 1$, 
    \begin{align}\label{eq_relation_strip}
		\cos{\tfrac{3\pi}{8}}A_{T,\Theta} + B_{T,\Theta} = 1\,.
	\end{align}	
\end{corollary}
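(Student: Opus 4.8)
The plan is to derive \eqref{eq_relation_strip} from \eqref{eq_relation_rectangle} by letting $L\to\infty$. First I would note that, since all weights in \eqref{eq_w2c} are nonnegative and the rectangles $\Rect_{T,L}(\Theta)$ exhaust the strip $\S_T(\Theta)$ as $L\to\infty$, the partition functions $A_{T,L,\Theta}$ and $B_{T,L,\Theta}$ are non-decreasing in $L$; moreover every self-avoiding walk is finite, hence contained in $\Rect_{T,L}(\Theta)$ for $L$ large, and an arc (resp.\ bridge) of $\S_T(\Theta)$ is exactly an arc (resp.\ bridge) of $\Rect_{T,L}(\Theta)$ for all $L$ large enough. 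By monotone convergence, $A_{T,L,\Theta}\to A_{T,\Theta}$ and $B_{T,L,\Theta}\to B_{T,\Theta}$, a priori as values in $[0,\infty]$.

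The core of the argument is to show that the two horizontal boundary terms vanish: $D_{T,L,\Theta}\to 0$ and $E_{T,L,\Theta}\to 0$ as $L\to\infty$. Since $|\cos(\tfrac38\wind(\gamma))|\le 1$, one has
\[
|D_{T,L,\Theta}| + |E_{T,L,\Theta}| \ \le\ \sum_{\substack{\gamma \text{ in } \S_T(\Theta)\ \text{from } 0\\ \gamma \text{ reaches row } L \text{ or } -L}} \weight_\Theta(\gamma),
\]
so it suffices to prove that this tail tends to $0$. For this I would invoke the fact that the total weight of all self-avoiding walks confined to a strip of finite width $T$ (with the weights \eqref{eq_w2c} and no fugacity) is finite --- such walks are effectively subcritical, which is part of the connective-constant analysis in \cite{Gl}; the displayed quantity is then the tail of a convergent sum and goes to $0$. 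Alternatively, a sub-multiplicativity argument --- cut a walk at its first visit to row $L$, resp.\ $-L$, and discard the self-avoidance constraint between the two resulting pieces --- gives an explicit exponential bound $c^{L}$ with $c<1$, hence also convergence to $0$ and an effective rate.

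Finally I would pass to the limit in \eqref{eq_relation_rectangle}: the right-hand side stays $1$, while $D_{T,L,\Theta}+E_{T,L,\Theta}\to 0$, so $\cos\tfrac{3\pi}{8}A_{T,L,\Theta}+B_{T,L,\Theta}\to 1$. As $\cos\tfrac{3\pi}{8}>0$ and $A_{T,L,\Theta},B_{T,L,\Theta}$ are non-decreasing, both are bounded, hence $A_{T,\Theta},B_{T,\Theta}<\infty$ and $\cos\tfrac{3\pi}{8}A_{T,\Theta}+B_{T,\Theta}=1$. I expect the middle step --- controlling $D_{T,L,\Theta}$ and $E_{T,L,\Theta}$, equivalently establishing finiteness and decay of the strip partition function --- to be the only real obstacle; the remaining steps are routine bookkeeping with monotone limits.
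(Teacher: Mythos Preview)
Your overall structure --- pass to the limit $L\to\infty$ in \eqref{eq_relation_rectangle}, use monotone convergence for $A_{T,L,\Theta}$ and $B_{T,L,\Theta}$, and show that $D_{T,L,\Theta},E_{T,L,\Theta}\to 0$ --- is exactly the paper's. The gap is in how you justify the vanishing of the horizontal terms.

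Your first option, invoking finiteness of the \emph{total} walk partition function in the strip at $x=1$, is stronger than what is needed and not obviously available at this point: what is quoted from \cite{Gl} is only $\SAW_\Theta(x,1)<\infty$ for $x<1$ in the half-plane, and the strip statement at $x=1$ is established later in this paper via Proposition~\ref{prop:same_rad_conv} and the identification $y_c^*(T,\Theta)>1$, which themselves rely on the present corollary. Your second option, the sub-multiplicativity sketch, does not explain why the base $c$ is strictly less than $1$; cutting at the first visit to row $L$ and iterating yields a bound $c^L$ with $c$ a one-step partition function that you have not shown to be below $1$.

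The paper avoids both issues by a self-contained bootstrap. Any walk contributing to $D_{T,L,\Theta}$ ends on the top side $\delta$ and may be completed, by appending at most $T$ arcs of uniformly bounded weight, to an arc in $\S_T(\Theta)$ from $0$ to the boundary point at height $L+1$. This gives $D_{T,L,\Theta}\le C(T)\,G_{\S_T(\Theta)}(0,L+1)$. Now observe that the winding of any walk ending on $\delta$ (resp.\ $\varepsilon$) lies in $[\tfrac\pi3,\tfrac{2\pi}3]$ (resp.\ its negative), so $D_{T,L,\Theta},E_{T,L,\Theta}\ge 0$; hence \eqref{eq_relation_rectangle} already gives $A_{T,L,\Theta}\le(\cos\tfrac{3\pi}{8})^{-1}$ for all $L$, whence $\sum_{L}G_{\S_T(\Theta)}(0,L)=A_{T,\Theta}<\infty$ and the individual terms $G_{\S_T(\Theta)}(0,L+1)$ tend to $0$. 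No external subcriticality input is required: the finiteness one needs is that of $A_{T,\Theta}$, which is furnished by the very relation you are taking the limit of.
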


\begin{proof}
	Fix $\Theta$ and $T$ as in the statement. 
	First notice that $A_{T,\Theta} = \lim_{L \to \infty} A_{T,L,\Theta}$ and $B_{T,\Theta} = \lim_{L \to \infty} B_{T,L,\Theta}$. 
	Indeed, any self-avoiding arc of $\S_{T}(\Theta)$ is contained in a rectangle $\Rect_{T,L}(\Theta)$ for $L$ large enough, 
	and hence is accounted for in $A_{T,L,\Theta}$. Since all terms contributing to $A_{T,\Theta}$ are positive, the convergence is proved. 
	The same holds for bridges. 
	
	In light of \eqref{eq_relation_rectangle} and the above observation, it suffices to prove that 
	$D_{T,L,\Theta}\to 0$ and $E_{T,L,\Theta}\to 0$ as $L \to \infty$. 
	We will prove this for $D_{T,L,\Theta}$, the proof for $E_{T,L,\Theta}$ is identical.

	Observe that, any self-avoiding path $\gamma$ contributing to $D_{T,L,\Theta}$ may be completed by at most $T$ steps 
	(that is at most $T$ rhombi with arcs in them)  to form a self-avoiding path 
	on $\S_{T}(\Theta)$, with endpoints $(0,0)$ and $(0,L+1)$. Indeed, one can obtain such path by adding one more row of rhombi at the top of~$\Rect_{T,L}(\Theta)$ and linking the end of~$\gamma$ to the left side of~$\S_{T}(\Theta)$ by steps in this column.
	Each rhombus in the completion affects the weight of $\gamma$ by a factor bounded below by some universal constant $c > 0$. 
	Thus using that all angles~$\theta_k\in[\pi/3,2\pi/3]$ and hence~$\wind(\gamma)\geq \pi/3$ we get
	\begin{align*}
		0 \leq D_{T,L,\Theta} \leq c^T \cos\big(\tfrac{\pi}8\big)  G_{\S_T(\Theta)}(0,L+1).
	\end{align*}
	Finally observe that $$\sum_{L \in \Z} G_{\S_T(\Theta)}(0,L) = A_{T,\Theta} \leq\Big(\cos{\tfrac{3\pi}{8}}\Big)^{-1} < \infty,$$
	which implies that $G_{\S_T(\Theta)}(0,L+1)$ converges to $0$ as $L \to \infty$. 
	Since $T$ is fixed, the two displayed equation above imply that $D_{T,L,\Theta}\to 0$ as $L \to \infty$. 
\end{proof}

\subsection{Observable with fugacity}

\newcommand{\wt}{\widetilde}

The parafermionic observable with fugacity on the boundary was introduced in~\cite{BBDDG} for the hexagonal lattice. It may be adapted easily to our case; we do this below. 
The observable will be defined inside of rectangles and, for technical reasons, the fugacity will be inserted on the right boundary, rather than on the left. To mark this difference, we add a tilde to all quantities with fugacity on the right. 

Let~$\Theta = \{\theta_k\}_{k=1}^T$, with~$\theta_T= \pi/3$ and~$\theta_k\in [\pi/3, 2\pi/3]$ for~$1\leq k<T$. 
Consider~$\Rect_{T,L}(\Theta)$ and split the rhombi of the last column into equilateral triangles (see Fig.~\ref{fig:split}). 
For a SAW $\gamma$ on $\Rect_{T,L}(\Theta)$, define its weight as $\wt \weight(\gamma;1,y) = \weight(\gamma)y^{b_r(\gamma)}$, where~$b_r(\gamma)$ is the number of visits of~$\gamma$ to the triangles adjacent to the right boundary of~$\Rect_{T,L}(\Theta)$.
For $z \in V(\Rect_{T,L}(\Theta))$, set 
\begin{align}\label{def_parafermion_fug}
    \wt F(z; y)=\sum_{\gamma:0\to z}{\wt \weight(\gamma;1,y)e^{-i\cdot\tfrac{5}{8}\cdot  \wind(\gamma)}}.
\end{align}
It is  easy to check (following the same procedure as in \cite[Lemma~4.1]{Gl}) that this observable satisfies the same Cauchy-Riemann equation~\eqref{eq-CR} for all rhombi $r$ in columns $1,\dots, T-1$: 
\begin{align*}
   	&\wt F(z_E;y)-\wt F(z_W;y) - e^{i\theta}(\wt F(z_S;y)-\wt F(z_N;y)) = 0.
\end{align*}
However, for rhombi $r$ in the rightmost column, a ``defect'' needs to be added to the relation~\eqref{eq-CR}, which thus becomes
\begin{align*}
	&\Re\left[\wt F(z_E;y)-\wt F(z_W;y) - e^{i\theta}(\wt F(z_S;y)-\wt F(z_N;y))\right] 
	= &&\frac{(y-1)y^*}{y(y^* -1)}\cdot G_{\Rect_{T,L}(\Theta)}(0,z_E),
\end{align*}
where
\begin{align*}
	y^*  = 1 + \sqrt{2}\,.
\end{align*}

An analogous of Lemma \ref{lem_eq_rectangle} may be obtained by summing the real part of the equations above for all rhombi of $\Rect_{T,L}(\Theta)$. The result is analogous to~\cite[Prop.~4]{BBDDG}.
We first introduce notation analogous to~\eqref{eq_def_AB}--\eqref{eq_def_DE}; 
recall that the sides of $R_{T,L}(\Theta)$ are $\alpha,\beta,\delta$ and $\varepsilon$. Set
\begin{align*}
    \wt A_{T,L,\Theta}(y)&=\!\!\!\sum_{\gamma:0\to z\in \alpha}\!\!\!{\wt \weight(\gamma;1,y)}\,,
    &\wt B_{T,L,\Theta}(y)&=\!\!\!\sum_{\gamma:0\to z\in \beta}\!\!\!{\wt \weight(\gamma;1,y)}\,,\\
    \wt D_{T,L,\Theta}(y)&=\!\!\!\sum_{\gamma:0\to z\in \delta}\!\!\!{\cos\big(\tfrac{3}{8}\wind(\gamma)\big)\wt \weight(\gamma;1,y)}\,,
    &\wt E_{T,L,\Theta}(y)&=\!\!\!\sum_{\gamma:0\to z\in \varepsilon}\!\!\!{\cos\big(\tfrac{3}{8}\wind(\gamma)\big)\wt \weight(\gamma;1,y)}\,.
\end{align*}

\begin{lemma}\label{lem:rel_rectangle_fug}
    Let~$\Theta= \{\theta_k\}_{1 \leq k \leq T}$, where~$\theta_T= \pi/3$ and~$\theta_k\in [\pi/3,2\pi/3]$ for~$1 \leq k < T$. 
    Then, for any $y >0 $,
    \begin{align}\label{eq:relation_rectangle_fug}
	    \cos\big(\tfrac{3}{8}\big)\wt A_{T,L,\Theta}(1, y) + \tfrac{y^*-y}{y(y^*-1)}\wt  B_{T,L,\Theta}(1, y) + \wt D_{T,L,\Theta}(1, y) + \wt E_{T,L,\Theta}(1, y) = 1\,.
    \end{align}
   \end{lemma}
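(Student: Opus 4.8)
The plan is to follow the proof of Lemma~\ref{lem_eq_rectangle} (i.e.\ of \cite[Lem.~4.1]{Gl}) almost verbatim, the only genuinely new ingredient being the bookkeeping of the defect produced by the last column. We start from the two local facts recorded just before the statement: $\wt F(\,\cdot\,;y)$ satisfies the Cauchy--Riemann identity \eqref{eq-CR} for every rhombus in columns $1,\dots,T-1$, while for a rhombus $r$ of the last column the real part of the left-hand side of \eqref{eq-CR} equals $\tfrac{(y-1)y^*}{y(y^*-1)}\,G_{\Rect_{T,L}(\Theta)}(0,z_E)$, where $z_E$ is the East mid-edge of $r$. Both are checked exactly as in \cite{Gl}: the reweighting $\wt\weight(\gamma;1,y)=\weight(\gamma)\,y^{b_r(\gamma)}$ is diagonal in the walk $\gamma$, so it does not interfere with the algebra that produces \eqref{eq-CR}, and the only configurations weighted differently from the $y=1$ case are those touching the rightmost triangles, which is precisely what creates the defect.

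Next I would sum $\Re[\wt F(z_E;y)-\wt F(z_W;y)-e^{i\theta}(\wt F(z_S;y)-\wt F(z_N;y))]$ over all rhombi of $\Rect_{T,L}(\Theta)$ and telescope. Each mid-edge interior to the domain is shared by exactly two rhombi: if it is an East/West mid-edge it occurs once as $+\Re\wt F$ and once as $-\Re\wt F$, and these cancel whatever the two angles are; if it is a North/South mid-edge the two rhombi sharing it lie in the same column, hence carry the same angle $\theta$, so the occurrences $+\Re[e^{i\theta}\wt F]$ and $-\Re[e^{i\theta}\wt F]$ cancel. Thus the sum collapses to a sum over $V(\partial\Rect_{T,L}(\Theta))$, which is evaluated exactly as in \cite[Lem.~4.1]{Gl}: on the left boundary $\alpha$ the winding of a walk ending there takes one of two values symmetric about $0$, producing the factor $\cos\tfrac{3\pi}{8}$ in front of $\wt A_{T,L,\Theta}(y)$; on the right boundary $\beta$ it contributes a factor $1$; on the top and bottom boundaries $\delta$ and $\varepsilon$ the winding is unconstrained, which is exactly why $\wt D_{T,L,\Theta}(y)$ and $\wt E_{T,L,\Theta}(y)$ are defined with the weight $\cos(\tfrac38\wind(\gamma))$; finally the empty walk, whose endpoint $0\in\alpha$ is not counted in $\wt A_{T,L,\Theta}(y)$, contributes the constant $1$. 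Hence the telescoped sum equals $\cos\tfrac{3\pi}{8}\,\wt A_{T,L,\Theta}(y)+\wt B_{T,L,\Theta}(y)+\wt D_{T,L,\Theta}(y)+\wt E_{T,L,\Theta}(y)-1$.

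On the other hand, by the local relations the same sum over all rhombi equals $\tfrac{(y-1)y^*}{y(y^*-1)}\sum_{r\in\text{col }T}G_{\Rect_{T,L}(\Theta)}(0,z_E^{(r)})$. As $r$ runs over the last column the East mid-edges $z_E^{(r)}$ run over exactly the mid-edges of the right boundary $\beta$, and by the normalisation chosen for $\wt\weight$ the quantity $\sum_{z\in\beta}G_{\Rect_{T,L}(\Theta)}(0,z)$ is nothing but $\wt B_{T,L,\Theta}(y)$. Equating the two expressions for the sum over all rhombi gives
\[
\cos\tfrac{3\pi}{8}\,\wt A_{T,L,\Theta}(y)+\wt B_{T,L,\Theta}(y)+\wt D_{T,L,\Theta}(y)+\wt E_{T,L,\Theta}(y)-1=\tfrac{(y-1)y^*}{y(y^*-1)}\,\wt B_{T,L,\Theta}(y),
\]
and since $1-\tfrac{(y-1)y^*}{y(y^*-1)}=\tfrac{y^*-y}{y(y^*-1)}$ this rearranges precisely into \eqref{eq:relation_rectangle_fug}.

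The main obstacle is the last step: the telescoping and the winding/boundary computation carry over verbatim from the $y=1$ case (again because the fugacity weight is diagonal), but one must be careful that, with the conventions adopted for $b_r(\gamma)$ and for the generating function appearing in the defect, the sum of the defects over the last column is \emph{exactly} $\tfrac{(y-1)y^*}{y(y^*-1)}\,\wt B_{T,L,\Theta}(y)$ — with no spurious correction at the extreme mid-edges of $\beta$ and no off-by-one in the exponent of $y$. The value $y^*=1+\sqrt2$ is exactly the one that makes this identity clean, in the same way as in \cite{BBDDG}; verifying this matching is the only place where the details genuinely differ from \cite[Lem.~4.1]{Gl}.
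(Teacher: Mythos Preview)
Your proposal is correct and follows exactly the approach the paper indicates: the paper does not give a detailed proof but explicitly says the result is obtained ``by summing the real part of the equations above for all rhombi of $\Rect_{T,L}(\Theta)$'', referring the reader to \cite[Prop.~4]{BBDDG} for the analogous computation. Your telescoping argument, boundary evaluation, and identification of the defect sum with $\tfrac{(y-1)y^*}{y(y^*-1)}\wt B_{T,L,\Theta}(y)$ are precisely this procedure carried out in full, and your final caveat about matching conventions is the only genuinely delicate point, as you note.
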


The proof of this lemma is similar to that of~\cite[Prop.~4]{BBDDG}; we will not detail it here. 
The only result of this section that will be used outside of it is the following corollary.

\begin{corollary}\label{cor:small_y}
    Let~$\Theta= \{\theta_k\}_{k\in\N}$, where $\theta_1 = \frac\pi3$ and~$\theta_k\in [\pi/3,2\pi/3]$ for all~$k \geq 2$. Assume that~$y< 1+\sqrt{2}$. 
    Then~$B_{T,\Theta}(y) \leq \tfrac{\sqrt{2}y}{1+\sqrt{2}-y}$.
\end{corollary}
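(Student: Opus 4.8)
The plan is to read the bound straight off the fugacity identity of Lemma~\ref{lem:rel_rectangle_fug} by discarding the non-negative terms, and then to pass to the strip limit $L\to\infty$. The one genuine issue is that Lemma~\ref{lem:rel_rectangle_fug} is stated for the observable with fugacity inserted on the \emph{right} boundary (hence it requires the last column to have angle $\pi/3$), whereas $B_{T,\Theta}(y)$ counts visits to the \emph{left}-most column (which is why the hypothesis is $\theta_1=\pi/3$). So the first step is to reconcile the two. Let $\Theta^{R}=(\theta_T,\theta_{T-1},\dots,\theta_1)$ be the reversal of the first $T$ angles; then $\theta^{R}_T=\theta_1=\pi/3$ and $\theta^{R}_k\in[\pi/3,2\pi/3]$ for $k<T$, so $\Rect_{T,L}(\Theta^{R})$ satisfies the hypotheses of Lemma~\ref{lem:rel_rectangle_fug}. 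The rotation by $\pi$ is a weight-preserving graph isomorphism from $\S_T(\Theta)$ onto $\S_T(\Theta^{R})$ which swaps the two vertical boundaries; combining it with the reversibility of self-avoiding walks and the invariance of the infinite strip under vertical translation, one identifies $B_{T,\Theta}(y)$ with the partition function of self-avoiding walks in $\S_T(\Theta^{R})$ from $0$ to the right boundary, weighted by $\weight(\gamma)\,y^{b_r(\gamma)}$, i.e. with $\lim_{L\to\infty}\wt B_{T,L,\Theta^{R}}(y)$.

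\textbf{Extracting the inequality.} Apply Lemma~\ref{lem:rel_rectangle_fug} in $\Rect_{T,L}(\Theta^{R})$. For $0\le y<y^{*}=1+\sqrt2$ the coefficient $\tfrac{y^{*}-y}{y(y^{*}-1)}$ of $\wt B$ is strictly positive. Since all angles lie in $[\pi/3,2\pi/3]$, every weight $\weight(\gamma)$ is non-negative, hence $\wt A_{T,L,\Theta^{R}}(y)\ge 0$; moreover $\wt D_{T,L,\Theta^{R}}(y)\ge 0$ and $\wt E_{T,L,\Theta^{R}}(y)\ge 0$, because a self-avoiding walk ending on the top (resp.\ bottom) side of the rectangle has winding confined to a range on which $\cos(\tfrac38\cdot)$ is non-negative --- exactly as in the proof of the Corollary following Lemma~\ref{lem_eq_rectangle}, the extra factor $y^{b_r(\gamma)}\ge 0$ not affecting any sign. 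Therefore \eqref{eq:relation_rectangle_fug} yields
\[
\tfrac{y^{*}-y}{y(y^{*}-1)}\,\wt B_{T,L,\Theta^{R}}(y)\;\le\;1,
\qquad\text{hence}\qquad
\wt B_{T,L,\Theta^{R}}(y)\;\le\;\tfrac{y(y^{*}-1)}{y^{*}-y}\;=\;\tfrac{\sqrt2\,y}{1+\sqrt2-y}.
\]
The quantities $\wt B_{T,L,\Theta^{R}}(y)$ are non-decreasing in $L$ (larger domain, non-negative summands) and now uniformly bounded, so they converge; by the first step the limit equals $B_{T,\Theta}(y)$, which therefore satisfies the claimed bound.

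\textbf{Expected main obstacle.} I expect the only delicate point to be the reflection bookkeeping of the first step: the rotation by $\pi$ sends the anchor $0$ to a point on the \emph{right} boundary of $\S_T(\Theta^{R})$, so to recognise $\wt B_{T,\Theta^{R}}(y)$ one must first reverse the walks and then invoke vertical translation invariance of the strip to move the anchored endpoint back to $0$. This is the standard ``bridges anchored on the left and bridges anchored on the right have the same partition function'' fact for translation-invariant strips, but since the weighted tiling is generally \emph{not} left--right symmetric it is worth spelling out carefully. Everything else is routine: the non-negativity of $\wt D,\wt E$ is verbatim the no-fugacity argument, and once the signs are in place the inequality and the limit are immediate.
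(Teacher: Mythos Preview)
Your proposal is correct and follows essentially the same route as the paper: reverse the sequence of angles so that the last column has angle $\pi/3$, identify $B_{T,\Theta}(y)$ with the right-fugacity bridge partition function in the reversed strip via the reflection/reversal/translation bijection, apply the fugacity identity \eqref{eq:relation_rectangle_fug}, discard the non-negative terms (using the winding bound for $\wt D,\wt E$), and pass to the limit $L\to\infty$. The only cosmetic difference is that you phrase the geometric bijection as a rotation by~$\pi$ whereas the paper uses a horizontal mirror plus vertical shift; these are equivalent.
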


\begin{proof}
	Fix a sequence $\Theta = (\theta_k)$ as above (with $\theta_1=\pi/3$), a value $T \geq 1$ and~$y <1+\sqrt{2}$. 
    Write $\wt\Theta = (\theta_T,\dots, \theta_1)$ and
    $\wt B_{T,\wt \Theta}(y)$ for the partition function of bridges in $\S_T(\wt \Theta)$ with fugacity $y$ on the right boundary:
    \begin{align*}
    	\wt B_{T,\wt \Theta}(y)&=\!\!\!\sum_{\gamma \text{ bridge in $\S_T(\wt\Theta)$}}\!\!\!{\wt \weight_{\wt \Theta}(\gamma;1,y)(\gamma)}.
    \end{align*}
	There is an obvious bijection between bridges in $\S_T(\wt\Theta)$ and those in $\S_T(\Theta)$: 
do a symmetry with respect to a vertical axis that exchanges the sides of the strip and shift it vertically so that it starts at row $0$. 
	The weight $\weight(\gamma)$ of any self-avoiding bridge $\gamma$ is equal to that of its reverse; moreover the winding of any bridge is $0$, whether it is in $\S_T(\wt\Theta)$ or $\S_T(\Theta)$. 
	Finally, if bridges in $\S_T(\wt\Theta)$ are weighted with fugacity $y$ on the right boundary, 
	that corresponds to bridges in $\S_T(\Theta)$ having fugacity on the left.
    Thus
    \begin{align*}
    	\wt B_{T,\wt \Theta}(y) = B_{T,\Theta}(y).
    \end{align*}
    Next we bound the left-hand side of the above.
    
    Fix some $L > 0$.
    All walks $\gamma$ in $\Rect_{T,L}(\wt\Theta)$ originating at $0$ and with endpoint on $\delta$ and $\varepsilon$ 
    have winding in $[\pi/3,2\pi/3]$ and $[-2\pi/3,-\pi/3]$, respectively. 
    Thus, all terms in~\eqref{eq:relation_rectangle_fug} are positive when~$y<y^*=1+\sqrt{2}$. We find 
    \[
    	\wt B_{T,L,\wt\Theta}(y) \leq \tfrac{y(y^*-1)}{y^*-y} = \tfrac{\sqrt{2}y}{1+\sqrt{2}-y} \,.
    \]
    Now observe that $\wt B_{T,\wt \Theta}(y) = \lim_{L \to \infty} \wt B_{T,L,\wt\Theta}(1, y)$. 
    Indeed, any bridge contributing to $\wt B_{T,\wt \Theta}(y)$ has a finite vertical span, and is therefore included in $\wt B_{T,L,\wt\Theta}(1, y)$ for $L$ large enough. Moreover, all terms in the sum defining $\wt B_{T,\wt \Theta}(y)$ are positive. 
     Since the bound for $\wt B_{T,L,\Theta}(y)$ above is uniform in $L$, it extends to $\wt B_{T,\wt \Theta}$ and thus to $B_{T,\Theta}$.
\end{proof}

\section{The Yang--Baxter equation}\label{sec:YangBaxter}

For this section only we will consider a slight generalisation of the model described above. 
First of all, we will consider rhombi with any angles in $(0,\pi)$. 
Secondly, we will consider walks on any rhombic tiling; 
rather than defining this properly, we direct the reader to the examples of figures \ref{fig_YB} and \ref{fig:saw-YB}.
Finally, we consider also families of walks rather than a single one. 
For $\gamma_1,\dots, \gamma_n$ a collection of (finite) self-avoiding walks 
such that all rhombi intersected by $\gamma_1 \cup \dots \cup \gamma_n$ are in one of the settings of Fig.~\ref{figWeights}, 
define the weight of the family as the product of the weights of each rhombus.

\begin{proposition}[Yang-Baxter equation]\label{prop:YB}
	Let $\sfH$ be a hexagon formed of three rhombi as in Fig.~\ref{fig_YB}, left diagram. 
	Write $\partial \sfH$ for the six boundary edges of $\sfH$. 
	Let $\sfH'$ be the rearrangement of the three rhombi that form $\sfH$, 
	as in Fig.~\ref{fig_YB}, middle diagram.
	For any~$k\leq 3$ and any choice of distinct vertices $x_1,y_1,\dots,x_k,y_k$,  on the edges of $\partial \sfH$, 
	\[
	\sum_{\substack{\gamma_1,\dots, \gamma_k \subset\sfH   \\ \gamma_i \colon x_i\to y_i}} \weight_{\sfH}(\gamma_1 \cup \dots \cup \gamma_k) 
	= \sum_{\substack{\gamma_1,\dots, \gamma_k \subset\sfH'\\ \gamma_i \colon x_i\to y_i}} \weight_{\sfH'}(\gamma_1 \cup \dots \cup \gamma_k) ,
	\]
	where the sum is taken over all disjoint paths~$\gamma_1,\dots,\gamma_k$.
	In other words, for any pairs of points on the boundary, the weight of walks connecting these pairs is the same in $\sfH$ and $\sfH'$. 
\end{proposition}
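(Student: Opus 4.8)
The plan is to verify the identity by a finite case check, exploiting the fact that the local picture inside $\sfH$ (resp.\ $\sfH'$) is entirely determined by how the boundary edges of $\partial\sfH$ are connected to one another. First I would observe that, since the $\gamma_i$ are self-avoiding and pairwise disjoint, a configuration $\gamma_1\cup\dots\cup\gamma_k$ inside $\sfH$ is nothing more than a non-crossing perfect matching of the subset $\{x_1,y_1,\dots,x_k,y_k\}$ of the six boundary mid-edges, together with a choice, for each rhombus, of one of the six local patterns of Fig.~\ref{figWeights} compatible with that matching. Both $\sfH$ and $\sfH'$ have the same boundary $\partial\sfH$ (same six edges), so the left and right sums range over matchings of the \emph{same} set of boundary points; what changes is the internal combinatorics. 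Thus it suffices to prove, for every fixed non-crossing pairing $\pi$ of a subset $S\subseteq\partial\sfH$ of even size, that
\[
\sum_{\substack{\text{configs in }\sfH\\ \text{realising }\pi}} \weight_{\sfH} \;=\; \sum_{\substack{\text{configs in }\sfH'\\ \text{realising }\pi}} \weight_{\sfH'}.
\]
Because each rhombus carries at most two disjoint arcs and $\sfH$ has only three rhombi, the number of internal configurations realising a given $\pi$ is small (bounded by a fixed constant), so each such identity is a finite algebraic relation among the weight functions $u_1,u_2,v,w_1,w_2$ evaluated at the three rhombus angles.

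Second, I would reduce the general statement to the case $k$ as large as possible, i.e.\ to pairings of \emph{all} six boundary edges (three arcs), plus the degenerate cases with fewer arcs. The key point is that adding or removing a ``through-strand'' that simply crosses $\sfH$ does not interact with the rearrangement: one can enumerate the non-crossing pairings of the six edges of a hexagon (there are Catalan-many, $C_3 = 5$, for a full pairing, plus the pairings of $4$, $2$, and $0$ edges), and for each one list the finitely many ways the three internal rhombi can be filled. Concretely, I would set up coordinates on the three rhombi with angles summing appropriately (the defining feature of the Yang–Baxter hexagon is that the three rhombus angles $\theta_1,\theta_2,\theta_3$ are linked — in the $\sfH\leftrightarrow\sfH'$ move the middle rhombus angle is $\pi-\theta_1-\theta_2$ or similar, dictated by the geometry of the hexagon) and then, pairing by pairing, check the resulting trigonometric identity in $\theta_1,\theta_2$. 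These are exactly the relations Nienhuis~\cite{N90} derived; their validity for the weights~\eqref{eq_w2c} is the content of the theorem, and I would either cite~\cite{N90,CaIk} for the verification or carry out the handful of trigonometric reductions (using product-to-sum formulas) explicitly in an appendix.

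Third, for the cases with fewer than three arcs I would argue that they follow from the three-arc case by a ``strand-removal'' / summing argument: a configuration with $k<3$ arcs on $\sfH$ extends in several ways to configurations with more strands, but more simply, one observes that the local weights are built so that the generating identity is homogeneous — the sum over all ways to route a fixed set of boundary points is what is preserved, and the empty or single-arc cases are either trivial (weight $1$ on both sides when nothing enters) or reduce to the well-known two-strand relations $u_1 u_2 + v w_? = \dots$ that are part of the same family of Nienhuis equations. Here one must be slightly careful that the identification of $\partial\sfH$ with $\partial\sfH'$ is the correct one (the six edges are in the same cyclic positions after the move), which is visible from Fig.~\ref{fig_YB}.

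The main obstacle I anticipate is \emph{bookkeeping rather than depth}: correctly enumerating the non-crossing pairings of the six hexagon edges together with all internal fillings, keeping track of which rhombus has which angle (and the constraint linking the three angles), and matching up the winding/orientation conventions so that no spurious phase appears — recall the observable uses a factor $e^{-i\frac58\wind}$, but here we are working with the bare weights $\weight$, so windings should not enter at all; one must confirm that Fig.~\ref{figWeights} assigns weights purely by the unoriented local pattern. Once the enumeration is organised, each individual identity is a short trigonometric computation; the risk is missing a case or mis-pairing the boundary edges under the rearrangement. I would therefore structure the proof as: (i) reduce to fixed boundary pairings; (ii) tabulate pairings and internal configurations; (iii) verify the resulting finite list of trigonometric identities (citing~\cite{N90} where convenient), which establishes the proposition.
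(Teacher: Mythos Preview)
Your proposal is correct and matches the paper's own approach: the paper's proof consists of exactly the finite case check you describe, listing for each choice of boundary points the possible connections in the two tilings and verifying that the sums of weights agree, with reference to \cite{N90,CaIk,AB} for the explicit equations. Your write-up is in fact more detailed than the paper's, which dispatches the argument in one sentence.
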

The proof consists simply of listing for each choice of $x_1,y_1,\dots,x_k,y_k$ ($k$ is always smaller than $3$)
the weights for all possible connections in the two tilings and explicitly computing their sum. 
The weights \eqref{eq_w2c} were derived in \cite{N90} to satisfy these equations.
Cardy and Ikhlef \cite{CaIk} found the same weights based on discrete holomorphicity.
The connection between the two was explored in \cite{AB}, where the Yang-Baxter equations are explicitly listed.

Equivalent relations may be obtained for any model with loop-weight between $0$ and $2$, with appropriate weight as functions of $n$ (see~\cite{Gl} for the exact formulae).
All three papers quoted above deal with general loop-weight; we only treat here the case of null loop-weight. 

As a consequence, if a large rhombic tiling contains three rhombi as in Fig.~\ref{fig_YB}, 
they may be rearranged without affecting the two point function for pairs of points outside of these three rhombi. 
 
\begin{corollary}
	Let~$\Omega$ be a rhombic tiling containing a hexagon $\sfH$ formed of three rhombi as in Proposition \ref{prop:YB}. 
	Denote by~$\Omega'$ the tiling that coincides with~$\Omega$ everywhere except for $\sfH$, 
	where the three rhombi are rearranged as $\sfH'$.
	Then, for any two vertices~$a,b$ of $\Omega$ that are not in $\sfH\setminus\partial\sfH$,	
	\begin{align}\label{eq:HH'}
		\sum_{\substack{\gamma\subset\Omega \\ \gamma \colon a\to b}} \weight_{\Omega}(\gamma) 
		= \sum_{\substack{\gamma\subset\Omega' \\ \gamma \colon a\to b}} \weight_{\Omega'}(\gamma).
	\end{align}
\end{corollary}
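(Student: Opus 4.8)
The plan is to decompose every self-avoiding walk in $\Omega$ from $a$ to $b$ into the piece lying inside the hexagon $\sfH$ and the piece lying outside it, to recognise the sum over all admissible inside pieces as an instance of Proposition~\ref{prop:YB}, and then to sum back up. The observation that makes this work is that $\sfH$ and $\sfH'$ tile the same hexagonal region and share the same six boundary edges $\partial\sfH$: consequently $\Omega$ and $\Omega'$ coincide, as tilings, on the whole exterior region $\Omega\setminus(\sfH\setminus\partial\sfH)$, and any midpoint of an edge of $\partial\sfH$ is a legitimate endpoint of a walk in either tiling.

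First I would fix $a$ and $b$ (neither strictly inside $\sfH$) and, given a self-avoiding walk $\gamma\subset\Omega$ from $a$ to $b$, let $\gamma_1,\dots,\gamma_k$ be the maximal sub-arcs of $\gamma$ contained in the closed hexagon $\sfH$ and let $\gamma^{\mathrm{ext}}$ be the union of the remaining sub-arcs of $\gamma$ (those lying outside the open hexagon). Since $\gamma$ crosses each edge at most once, each $\gamma_i$ enters and leaves $\sfH$ through a distinct pair of edges of $\partial\sfH$, so that $k\le 3$; its endpoints are midpoints of edges of $\partial\sfH$, and they are exactly the ``loose ends'' of $\gamma^{\mathrm{ext}}$ on $\partial\sfH$. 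Because the local weight is multiplicative over rhombi and $\Omega$ agrees with $\Omega'$ outside $\sfH$, one has $\weight_\Omega(\gamma)=\weight(\gamma^{\mathrm{ext}})\cdot\weight_\sfH(\gamma_1\cup\dots\cup\gamma_k)$, where $\weight(\gamma^{\mathrm{ext}})$ is a product of local weights of rhombi that belong to both tilings. Conversely, $\gamma$ is recovered from $\gamma^{\mathrm{ext}}$, the matching $\pi$ of the loose ends of $\gamma^{\mathrm{ext}}$ on $\partial\sfH$ recording which of them are joined through the hexagon, and the interior arcs realising $\pi$.

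Next I would group the walks $\gamma\subset\Omega$ from $a$ to $b$ according to the pair $(\gamma^{\mathrm{ext}},\pi)$. Call such a pair \emph{admissible} if, after inserting arcs joining the matched loose ends, the result is a single simple path from $a$ to $b$ and not a path together with one or more cycles. The crucial point is that admissibility --- and the self-avoidance of the reassembled curve --- depends only on the combinatorics of $(\gamma^{\mathrm{ext}},\pi)$, not on the particular interior arcs, because those arcs are confined to the open hexagon and meet $\gamma^{\mathrm{ext}}$ only at midpoints of $\partial\sfH$. Hence the family of admissible pairs is the same for $\Omega$ and for $\Omega'$, and for each fixed admissible pair the partial sum of $\weight_\Omega(\gamma)$ over all $\gamma$ with that pair equals $\weight(\gamma^{\mathrm{ext}})$ times $\sum\weight_\sfH(\gamma_1\cup\dots\cup\gamma_k)$, the sum running over all interior families in $\sfH$ realising $\pi$. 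By Proposition~\ref{prop:YB}, with $\{x_i,y_i\}$ taken to be the pairs of midpoints of $\partial\sfH$ matched by $\pi$ (and $k\le 3$), this inner sum is unchanged when $\sfH$ is replaced by $\sfH'$. Summing over all admissible pairs $(\gamma^{\mathrm{ext}},\pi)$ --- and treating separately the walks avoiding the open hexagon altogether, which contribute identically on both sides --- yields \eqref{eq:HH'}. All local weights being non-negative for the angles under consideration, this regrouping of non-negative terms is valid whether or not the sums are finite; and in the applications $\Omega$ is a finite tiling, so there is nothing to check on this front.

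I expect the main obstacle to be the bookkeeping in the decomposition step, not any analytic difficulty. One must check carefully that every self-avoiding walk from $a$ to $b$ determines a well-defined interior family of at most three arcs pairing up midpoints of $\partial\sfH$ (so that the $k\le 3$ cases of Proposition~\ref{prop:YB} suffice), and that re-gluing $\gamma^{\mathrm{ext}}$ with any interior family realising the prescribed matching produces a self-avoiding $a$--$b$ path exactly when it does for one such family. Both facts follow from $\gamma$ using each edge at most once and from the interior arcs remaining inside the open hexagon, but making the correspondence between walks in $\Omega$ and in $\Omega'$ exact is where the care lies.
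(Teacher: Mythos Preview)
Your approach is essentially the same as the paper's: decompose each walk into its part outside $\sfH$ and its part inside, group by the outside configuration together with the boundary pairing, and apply Proposition~\ref{prop:YB} to the inside sum; the paper merely sketches this double-sum argument in a few lines while you spell out the bookkeeping. One small caveat: in this section the angles are allowed to range over all of $(0,\pi)$, so local weights need not be non-negative, but your observation that in the applications $\Omega$ is finite (so the regrouping is valid regardless of signs) is the correct way to handle this.
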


\begin{proof}
	We only sketch this. Write the sums in \eqref{eq:HH'} as double sums. 
	First sum over all possible configurations outside $H$ (and $H'$ respectively), 
	then over those inside $H$ (or $H'$) which lead to a single path connecting $a$ to $b$. 
	The inside sum on the right and left hand side is equal due to Proposition \ref{prop:YB};
	the outside weights are equal in $\Omega$ and $\Omega'$, since the two tilings are identical outside $H$ and $H'$, respectively.
\end{proof}

\begin{figure}[ht]
    \begin{center}
      \includegraphics[scale=0.9,page=1]{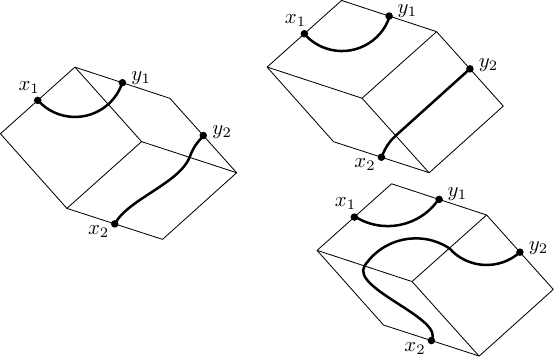}\qquad\quad
      \includegraphics[scale=0.55,page=1]{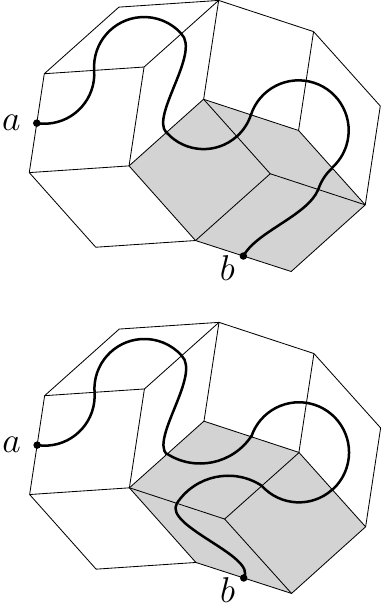}
    \end{center}
    \caption{{\em Left:} A hexagon formed of three rhombi of different angles. 
    {\em Middle:} The three rhombi may be rearranged to cover the same domain in a different fashion. 
    In the left image, the pairs of points $x_1,y_1$ and $x_2,y_2$ are connected in a single configuration; 
    in the middle image, the same connections are obtained in two distinct configurations. 
    The weight of the left configuration is equal to the sum of the weights of the two middle ones.
    {\em Right:} In a domain, changing three such rhombi does not alter the two point function between points $a$ and $b$.}
    \label{fig_YB}
\end{figure}

\section{Self-avoiding bridges and the 2-point function}\label{sec:bridges}

During the whole section we consider half-space rhombic tilings $H(\Theta)$.
Write $H(\pi/3)$ for the tiling with all angles equal to $\pi/3$. Recall that SAW on $H(\pi/3)$ is identical to that on the hexagonal lattice with the weight of a path $\gamma$ given by $(\sqrt{2+\sqrt{2}})^{-|\gamma|}$.

In this section we prove Theorems~\ref{thm-saw-2-point} and \ref{thm-saw-bridges}. 
Theorem~\ref{thm-saw-2-point} is shown by means of the Yang-Baxter transformation, which is used to gradually transform the lattice~$H(\pi/3)$ into an arbitrary lattice~$H(\Theta)$.  The relation \eqref{eq_relation_strip} between the partition functions of arcs and bridges in a strip together with Theorem~\ref{thm-saw-2-point} may be used to transfer the conclusion of Theorem~\ref{thm-saw-bridges} from the hexagonal lattice to any lattice $H(\Theta)$. Theorem~\ref{thm-saw-bridges} for the hexagonal lattice was proven in~\cite{BBDDG}; we provide below a new, shorter proof relying only on the parafermionic observable (see Proposition~\ref{prop:saw-bridges-hex}), that also provides an explicit (albeit weak) bound on $B_T$.

\subsection{Proof of Theorem \ref{thm-saw-bridges} for the hexagonal lattice.}

We will only work here with $H(\pi/3)$. Recall that weighted self-avoiding walk on $H(\pi/3)$ may be viewed as regular self-avoiding walk on a half space hexagonal lattice. We will write~$B_T$ instead of~$B_{T,\tfrac{\pi}{3}}$ for the partition function of bridges to simplify the notation.

Consider the strip $\S_{2L+1}(\pi/3)$ with width of~$2L+1$ hexagons and inscribe inside it an equilateral triangle $\Tri_{L}$ of side-length~$2L+1$ in such a way
that the midpoint of its vertical side is~0 (see Fig.~\ref{fig:saw-contour-triangle-strip}).
Let~$A^{\Delta}_{2L+1}$ be the partition function of walks starting at $0$, contained in the triangle, and ending on its left side;
write~$D^{\Delta}_{2L+1}$ for the partition function of walks ending on any of the two other sides of the triangle (see Fig.~\ref{fig:saw-contour-triangle-strip}).

\begin{lemma}
	The partition function $D^{\Delta}_{2L+1}$ is decreasing in $L$ and 
	\begin{align}\label{eq:triangle}
		B_{2L+1}  \leq \cos\left( \tfrac{\pi}{8}\right) D^{\Delta}_{2L+1} .
	\end{align}
\end{lemma}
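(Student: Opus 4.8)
The plan is to run the standard contour-integral argument of \cite{DCSmi10} (in the form of the proof of Lemma~\ref{lem_eq_rectangle}), but on the triangular domain $\Tri_L$ rather than on a rectangle. Concretely, sum the real part of the Cauchy--Riemann relation \eqref{eq-CR} over all faces of $\Tri_L$: the interior contributions telescope and one is left with a boundary identity. Two features make this domain special. First, $0$ is a midpoint of the vertical side, so the empty walk contributes the constant $1$ on the right-hand side, exactly as in Lemma~\ref{lem_eq_rectangle}. Second, $\partial\Tri_L$ consists of only \emph{three} sides (no horizontal top/bottom), so only two terms appear on the left. The vertical side contributes $\cos(\tfrac{3\pi}{8})\,A^{\Delta}_{2L+1}$, just as the left side of the rectangle does (every SAW from $0$ back to the vertical side has winding $\pm\pi$, and $\cos(\tfrac38\pi)=\cos(\tfrac{3\pi}{8})$). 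The two slanted sides each make angle $\pi/3$ with the horizontal, so a SAW from $0$ crossing a slanted side perpendicularly has terminal direction $\pm\pi/3$; using that $\Tri_L$ is a straight-sided, simply connected region one checks (closing the walk up along the boundary and applying the Umlaufsatz to the resulting simple closed curve) that its winding is \emph{exactly} $\pm\pi/3$, hence the slanted sides together contribute $\cos(\tfrac{\pi}{8})\,D^{\Delta}_{2L+1}$. This yields the identity
\[
\cos\big(\tfrac{3\pi}{8}\big)\,A^{\Delta}_{2L+1} + \cos\big(\tfrac{\pi}{8}\big)\,D^{\Delta}_{2L+1} = 1 .
\]

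Monotonicity of $D^{\Delta}_{2L+1}$ then follows from monotonicity of $A^{\Delta}_{2L+1}$: since $\Tri_L\subseteq\Tri_{L+1}$ and the vertical side of $\Tri_L$ is a sub-segment of the vertical side of $\Tri_{L+1}$, every SAW counted in $A^{\Delta}_{2L+1}$ is also counted in $A^{\Delta}_{2L+3}$, with the same weight (and the same winding $\pm\pi$). Hence $A^{\Delta}_{2L+1}$ is nondecreasing in $L$, and therefore $D^{\Delta}_{2L+1}=\cos(\tfrac{\pi}{8})^{-1}\bigl(1-\cos(\tfrac{3\pi}{8})A^{\Delta}_{2L+1}\bigr)$ is nonincreasing.

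For the inequality \eqref{eq:triangle}, combine the above with the strip identity \eqref{eq_relation_strip}, which reads $\cos(\tfrac{3\pi}{8})A_{2L+1}+B_{2L+1}=1$, where $A_{2L+1}$ is the partition function of arcs in $\S_{2L+1}(\pi/3)$. Every SAW counted in $A^{\Delta}_{2L+1}$ lies in $\Tri_L\subset\S_{2L+1}(\pi/3)$ and ends on the vertical side of $\Tri_L$, which is part of the left boundary of the strip; so it is counted in $A_{2L+1}$ with the same weight, giving $A^{\Delta}_{2L+1}\le A_{2L+1}$. Subtracting,
\[
B_{2L+1} = 1-\cos\big(\tfrac{3\pi}{8}\big)A_{2L+1} \ \le\ 1-\cos\big(\tfrac{3\pi}{8}\big)A^{\Delta}_{2L+1} \ =\ \cos\big(\tfrac{\pi}{8}\big)D^{\Delta}_{2L+1}.
\]

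The main obstacle is the geometric claim that every SAW from $0$ to a slanted side of $\Tri_L$ has winding exactly $\pm\pi/3$ (rather than $\pm\pi/3\pm2\pi k$), which is what turns the contour sum over the slanted sides into the clean multiple $\cos(\tfrac{\pi}{8})D^{\Delta}_{2L+1}$ of a plain partition function; this is exactly where the equilateral, straight-sided shape of $\Tri_L$ and the perpendicular-crossing convention are used. I note that if one only wants the inequality \eqref{eq:triangle} and not the monotonicity, this subtlety can be bypassed: the contour sum produces $\sum_{\gamma:0\to\text{slants}}\cos(\tfrac38\wind(\gamma))\weight(\gamma)$, and since any winding of such a walk satisfies $\wind\equiv\pm\pi/3\pmod{2\pi}$ one has $|\wind|\ge\pi/3$ and hence $\cos(\tfrac38\wind)\le\cos(\tfrac{\pi}{8})$ term by term, so this sum is at most $\cos(\tfrac{\pi}{8})D^{\Delta}_{2L+1}$ while still being at least $B_{2L+1}$ by the argument above. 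The monotonicity genuinely needs the exact-winding statement (or the identification of $D^{\Delta}_{2L+1}$ with the windingless sum $1-\cos(\tfrac{3\pi}{8})A^{\Delta}_{2L+1}$).
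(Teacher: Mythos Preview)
Your proposal is correct and follows essentially the same approach as the paper: derive the triangular identity $\cos(\tfrac{3\pi}{8})A^{\Delta}_{2L+1}+\cos(\tfrac{\pi}{8})D^{\Delta}_{2L+1}=1$ by summing the real part of \eqref{eq-CR} over $\Tri_L$, deduce monotonicity from $A^{\Delta}_{2L+1}\uparrow$, and compare with \eqref{eq_relation_strip} via $A^{\Delta}_{2L+1}\le A_{2L+1}$ to obtain \eqref{eq:triangle}. The paper simply asserts the triangular identity, whereas you supply the additional justification (via the Umlaufsatz) that the winding to a slanted side is exactly $\pm\pi/3$; this extra care is warranted but does not constitute a different method.
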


\begin{proof}
    By summing the real part of \eqref{eq-CR} as in the proof of Lemma~\ref{lem_eq_rectangle}, we obtain:
    \[
    	\cos\left( \tfrac{3\pi}{8}\right)A^{\Delta}_{2L+1} + \cos\left( \tfrac{\pi}{8}\right) D^{\Delta}_{2L+1} = 1,
    \]
    where we used that the winding of all paths contributing to~$D^{\Delta}_{2L+1}$ is~$\pm\pi/3$.
    
    All walks contributing to $A^{\Delta}_{2L+1}$ also contribute to $A^{\Delta}_{2L+3}$, 
    which implies that $A^{\Delta}_{2L+1}$ is increasing in $L$. 
    By the above equation, $D^{\Delta}_{2L+1}$ is decreasing in $L$. 
    Moreover, $A^{\Delta}_{2L+1}  \leq A_{2L+1} $ since the latter partition function is over a larger set of walks. 
    By eq.~\eqref{eq_relation_strip}:
    \begin{align*}
   		B_{2L+1} = 1- \cos\left( \tfrac{3\pi}{8}\right) A_{2L+1} \leq 1-  \cos\left( \tfrac{3\pi}{8}\right) A^\Delta_{2L+1} =\cos\left( \tfrac{\pi}{8}\right) D^{\Delta}_{2L+1}.
    \end{align*}
    This provides the desired conclusion. 
\end{proof}

\begin{figure}[!ht]
    \begin{center}
    \includegraphics[width=.235\textwidth, page=1]{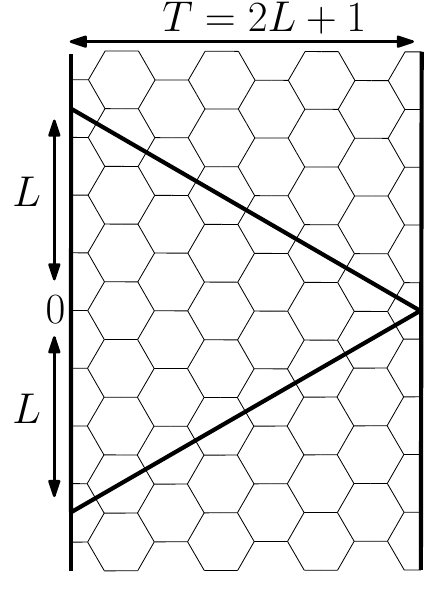}
    \hspace{0mm}
    \includegraphics[width=.235\textwidth, page=2]{hex_contour_triang_strip3.pdf}
    \hspace{0mm}
    \includegraphics[width=.235\textwidth, page=3]{hex_contour_triang_strip3.pdf}
    \hspace{0mm}
    \includegraphics[width=.235\textwidth, page=4]{hex_contour_triang_strip3.pdf}
    \end{center}
    \caption{
    {\it From left to right,} 
    {\it first:}~the strip of width~$T=2L+1$ and the equilateral triangle $\Tri_{L}$ of side-length~$2L+1$ inscribed in it.
    {\it Second:}~the same strip and three examples of walks: one arc contributing to~$A_T$ (blue) and two bridges contributing to~$B_T$.
    {\it Third:}~three examples of walks in $\Tri_{L}$:  one arc contributing to~$A^{\Delta}_T$ (blue) and two walks ending on the other sides 
    of the triangle and contributing to~$D^{\Delta}_T$. The one ending on the top contributes to $\ang_{L,K}^\Delta$ 
    {\it Fourth:}~the concatenation of (rotations and translations of) three walks contributing to  $\ang_{L,K_1}^\Delta$, $\ang_{K_1,K_2}^\Delta$ and $\ang_{K_2,K_3}^\Delta$, respectively, forms an arc contributing to $G_{\pi/3}(-L,K_3)$.}
    \label{fig:saw-contour-triangle-strip}
\end{figure}

We are in the position now to prove Proposition~\ref{prop:saw-bridges-hex}.

\begin{proof}[Proof of Proposition~\ref{prop:saw-bridges-hex}.]
By \eqref{eq:triangle} it suffices to show the conclusions of the proposition for  $D_{T}^\Delta$ instead of $B_T$. 

Recall the notation $G_{\pi/3}(a,b)$ for the 2-point function of walks on $H(\pi/3)$. 
By \eqref{eq_relation_strip}, $\lim_{T \to \infty} A_T \leq 1/\cos\left( \tfrac{3\pi}{8}\right)$. 
The limit above is the partition function of all arcs: 
$$ \lim_{T \to \infty} A_T = \sum_{k \in \Z} G_{\pi/3}(0,k) = 2\sum_{k \geq1} G_{\pi/3}(0,k).$$

For $L > 0$ and $0 \leq K \leq 2L$, write $\ang^\Delta_{L,K}$ for the partition function of walks in $\Tri_L$, 
starting at $0$ and ending on the top boundary, $K$ units from the left boundary (see fig \ref{fig:saw-contour-triangle-strip}). 
Then, by vertical symmetry, 
\begin{align}\label{eq:DDelta}
	D^\Delta_{2L+1} = 2 \sum_{K=0}^{2L}\ang^\Delta_{L,K}.
\end{align}

Fix $L > 0$. 
Using concatenations of walks contributing to $\ang^\Delta_{L,K}$ we may construct arcs contributing to 
$\sum_{b \geq  0} G_{\pi/3}(-L,b)$ as follows.
Divide the right half-plane $H(\pi/3)$ using the lines~$arg (z) =\pm \tfrac{\pi}{6}$ into three $\tfrac{\pi}{3}$-angles.
For $0 \leq K_3\leq 2K_2\leq 4 K_1\leq 8L$ and walks $\gamma^{(1)},\gamma^{(2)}, \gamma^{(3)}$ contributing to $\ang^\Delta_{L,K_1},\ang^\Delta_{K_1,K_2}$ and $\ang^\Delta_{K_2,K_3}$, respectively, obtain a walk contributing to $G_{\pi/3}(-L,K_3)$ by concatenating 
the translate of $\gamma_1$ by $(0,-L)$, 
the rotation by $\pi/3$ of the translate of $\gamma^{(2)}$ by $(0,K_1)$, and 
the rotation by $2\pi/3$ of the translate of $\gamma^{(3)}$ by $(0,K_2)$; see also fig~\ref{fig:saw-contour-triangle-strip}.
By summing over all values of $K_1,K_2,K_3$ we find
\begin{align*}
    \sum_{K_1=0}^{2L} \ang^\Delta_{L,K_1} \sum_{K_2=0}^{2K_1} \ang^\Delta_{K_1,K_2} \sum_{K_3 = 0}^{2K_2}  \ang^\Delta_{K_2,K_3} \leq  \sum_{k=L}^{9L}G_{\pi/3}(0,k).
\end{align*}
The sum on the right hand side goes from $L$ to $9L$ since the span of the obtained arc is $K_3 + L$, thus between $L$ and $9L$. 
Now, by \eqref{eq:DDelta}, the last sum on the left-hand side is equal to $\frac12 D^\Delta_{2K_2+1}$.
This is a decreasing quantity in $K_2$, thus 
\begin{align*}
    \sum_{k=L}^{9L}G_{\pi/3}(0,k)  
    &\geq \tfrac12\sum_{K_1=0}^{2L} \ang^\Delta_{L,K_1} \sum_{K_2=0}^{2K_1} \ang^\Delta_{K_1,K_2}\ D^\Delta_{2K_2 + 1} \\
    &\geq \tfrac12\sum_{K_1=0}^{2L} \ang^\Delta_{L,K_1}\ D^\Delta_{4K_1 +1} \sum_{K_2=0}^{2K_1} \ang^\Delta_{K_1,K_2}.
\end{align*}
By repeating this procedure for the other two sums, we find
\begin{align*}
    \sum_{k=L}^{9L}G_{\pi/3}(0,k)  
    &\geq \tfrac14\sum_{K_1=0}^{2L} \ang^\Delta_{L,K_1}\ D^\Delta_{4K_1 +1} \ D^\Delta_{2K_1 + 1}\\
    &\geq \tfrac14 \ D^\Delta_{8L+1}\  D^\Delta_{4L+1} \sum_{K_1=0}^{2L} \ang^\Delta_{L,K_1} 
    = \tfrac18 D^\Delta_{8L+1}\ D^\Delta_{4L+1}\ D^\Delta_{2L + 1}
    \geq  \tfrac18 \big(D^\Delta_{8L+1}\big)^3.
\end{align*}
Summing the above over $L= 9^k$ we find
\begin{align*}
	 \sum_{k=1}^{\infty}\big(D^\Delta_{8 \cdot 9^k +1 }\big)^3 \leq 8 \sum_{k=1}^{\infty}G_{\pi/3}(0,k) < \infty.
\end{align*}
Now, using the monotonicity in $T$ of $D^\Delta_T$ we may write 
\begin{align*}
	 \sum_{k=1}^{\infty}\big(D^\Delta_{8 \cdot 9^k  + 1}\big)^3  
	 \geq \sum_{k=1}^{\infty} \frac{1}{64 \cdot 9^k}\sum_{T = 8 \cdot 9^k + 1}^{8 \cdot 9^{k+1}}\big(D^\Delta_{T}\big)^3
	 \geq \frac{1}{8} \sum_{T = 73}^\infty \frac{1}{T}\big(D^\Delta_{T}\big)^3.
\end{align*}
Thus we have proved that $\frac{1}{T}\big(D^\Delta_{T}\big)^3$ is summable.
This implies in particular that 
$\big( D ^\Delta_{T}  (\log T)^{1/3}\big)_{T}$ contains a subsequence converging to $0$.
Finally, since $D^\Delta_{T}$ is decreasing, this implies $D^\Delta_{T} \xrightarrow[T \to \infty]{}0$.
The conclusions translate to $B_T$ using \eqref{eq:triangle}.
\end{proof}

\subsection{Proof of Theorem \ref{thm-saw-2-point} via the Yang-Baxter equation}

Now we are in the position to prove that the 2-point function is independent of the chosen tiling.
First we show that the 2-point function in a strip does not depend on the order of the columns of rhombuses in the tiling.
The strategy used here is reminiscent of the use of the Yang-Baxter equation to prove the commutation of transfer matrices, 
and of the strategy of \cite{GriMan14}.

\begin{figure}[!ht]
    \begin{center}
    \hspace{-5mm}
    \includegraphics[scale=0.5, page=1]{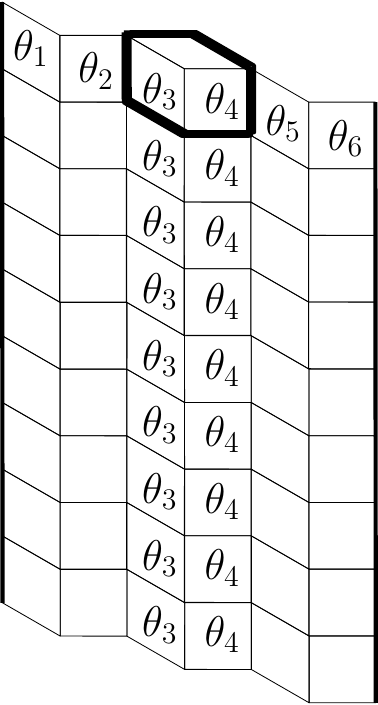}
    \hspace{5mm}
    \includegraphics[scale=0.5, page=2]{YB_transformation.pdf}
    \hspace{5mm}
    \includegraphics[scale=0.5, page=3]{YB_transformation.pdf}
    \hspace{5mm}
    \includegraphics[scale=0.5, page=4]{YB_transformation.pdf}
    \hspace{-5mm}
    \end{center}
    \caption{
    {\it Leftmost:} The domain $D_0$ obtained by adding a rhombus $r$ to the rectangle $\Rect_{T,L}(\Theta)$.
    {\it Second from the left:} The tiling $D_1$ is the result of the first Yang--Baxter transformation applied in the bold region of $D_0$.
    {\it Third from the left:} After two Yang--Baxter transformations $r$ is pushed down by~$2$ units and we obtain $D_2$. 
    {\it Rightmost:} After $2L$ repetitions, the rhombus $r$ is pushed all the way to the bottom of $\Rect_{T,L}(\Theta)$ and the two columns of rhombi are exchanged. The resulting tiling is $D_{2L}$.}
    \label{fig:saw-YB}
\end{figure}

\begin{proposition}\label{prop:saw-YB-strip}
    Let~$\S_T(\Theta)$ be a vertical strip tiled with~$T$ columns with angles~$\theta_1, \dots, \theta_T$.
    Then for any~$a,b$ on the boundary of~$\S_T(\Theta)$ the 2-point function~$G(a,b)$ does not depend on the order of angles.
\end{proposition}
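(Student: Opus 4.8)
The plan is to show that swapping two adjacent columns of the strip does not change the 2-point function; since any permutation of the columns $\theta_1,\dots,\theta_T$ is a product of adjacent transpositions, this suffices. So fix an index $j$ and let $\Theta' = (\theta_1,\dots,\theta_{j-1},\theta_{j+1},\theta_j,\theta_{j+2},\dots,\theta_T)$; I want to prove $G_{\S_T(\Theta)}(a,b) = G_{\S_T(\Theta')}(a,b)$ for all $a,b$ on the boundary. Because the strip is infinite in the vertical direction, I first reduce to a finite region: every walk from $a$ to $b$ is contained in some $\Rect_{T,L}(\Theta)$ for $L$ large, and all weights are nonnegative, so $G_{\S_T(\Theta)}(a,b) = \lim_{L\to\infty} G_{\Rect_{T,L}(\Theta)}(a,b)$, and likewise for $\Theta'$. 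Hence it is enough to exchange the two columns inside a large rectangle, up to a controlled boundary discrepancy that vanishes as $L\to\infty$.

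The core of the argument is the ``moving a rhombus across a column'' device illustrated in Fig.~\ref{fig:saw-YB}. Start from $\Rect_{T,L}(\Theta)$ and attach one extra rhombus $r$ of angle $\theta_{j+1}$ at the top of column $j$, forming the domain $D_0$; since $a,b$ lie on the boundary of the strip away from $r$, adding $r$ does not change the 2-point function between them (no walk contributing to $G(a,b)$ can enter a rhombus attached only along one edge at the very top). Now $r$ together with the top two rhombi of columns $j$ and $j+1$ form a hexagon of three rhombi of two distinct angles, exactly as in Proposition~\ref{prop:YB}; applying the Yang--Baxter rearrangement (via its Corollary, eq.~\eqref{eq:HH'}) pushes $r$ down by one step past one rhombus of column $j+1$ without altering $G(a,b)$, provided $a,b$ are not among the interior vertices of that hexagon. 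Repeating this $2L$ times slides $r$ all the way to the bottom of the rectangle; the net effect is that column $j$ and column $j+1$ have been interchanged and $r$ now sits at the bottom. Finally, detaching $r$ at the bottom again leaves $G(a,b)$ unchanged, and we are left with $\Rect_{T,L}(\Theta')$ (up to the inconsequential shift of one column by one unit, which disappears in the $L\to\infty$ limit). Chaining the equalities gives $G_{\Rect_{T,L}(\Theta)}(a,b) = G_{\Rect_{T,L}(\Theta')}(a,b)$ for every $L$, and letting $L\to\infty$ yields the claim.

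The step I expect to be the main obstacle is bookkeeping the boundary carefully: at each Yang--Baxter move one must check that neither $a$ nor $b$ (nor, more subtly, any vertex that a walk contributing to $G(a,b)$ is forced to use) lies strictly inside the hexagon being rearranged, so that the hypothesis ``$a,b \notin \sfH\setminus\partial\sfH$'' of the Corollary genuinely applies at every stage; since $a$ and $b$ are on the left/right boundary of the strip and the moving rhombus $r$ travels down an interior seam between columns $j$ and $j+1$, this is true, but it needs to be stated. A secondary point is to argue cleanly that attaching and later detaching the auxiliary rhombus $r$ does not affect $G(a,b)$ — this follows because $r$ is glued along a single edge at the extreme top (resp.\ bottom) of the rectangle, so any self-avoiding walk entering $r$ would have to leave through the same edge, which is impossible for a simple curve crossing each edge at most once. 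With these two observations in place the proof is a finite composition of applications of eq.~\eqref{eq:HH'} followed by the monotone limit in $L$.
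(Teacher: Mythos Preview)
Your overall strategy is the paper's: reduce to adjacent transpositions, work in a finite rectangle, slide an auxiliary rhombus $r$ from top to bottom by repeated Yang--Baxter moves, and take $L\to\infty$. But there is a genuine gap in how you handle the auxiliary rhombus. For $r$ together with the top rhombi of columns $j$ and $j+1$ to form a Yang--Baxter hexagon, $r$ must share one edge with \emph{each} of those two rhombi; it is attached along \emph{two} edges, not one (see Fig.~\ref{fig:saw-YB}). Hence a walk from $a$ to $b$ in $D_0$ can enter $r$ through one of these edges and leave through the other, so attaching $r$ does change the 2-point function: $G_{D_0}(a,b)-G_{\Rect_{T,L}(\Theta)}(a,b)=\sum_{\gamma\text{ uses }r}\weight(\gamma)>0$ in general, and likewise when $r$ is detached at the bottom. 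Your exact equality $G_{\Rect_{T,L}(\Theta)}(a,b)=G_{\Rect_{T,L}(\Theta')}(a,b)$ therefore does not follow.

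The paper repairs this with an $\varepsilon$-argument rather than an exact one. Given $\varepsilon>0$, choose $L$ so large that $G_{\Rect_{T,L+1}(\Theta)}(a,b)-G_{\Rect_{T,L}(\Theta)}(a,b)<\varepsilon$ (possible because these increase to the finite limit $G_{\S_T(\Theta)}(a,b)$). Every walk that uses $r$ can be turned, by changing at most two rhombi, into a walk in $\Rect_{T,L+1}(\Theta)$ that is not in $\Rect_{T,L}(\Theta)$; this shows the contribution of walks through $r$ is at most $c\varepsilon$ for a universal constant $c$, and the same bound applies at the bottom. Chaining gives $|G_{\S_T(\Theta)}(a,b)-G_{\S_T(\Theta\circ\tau_j)}(a,b)|\le(2+2c)\varepsilon$ for every $\varepsilon>0$. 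Two smaller points you also need: one must assume (WLOG) $\theta_j<\theta_{j+1}$ so that $r$ does not overlap the rectangle; and the angle of $r$ need not lie in $[\pi/3,2\pi/3]$, so $w_1$ or $w_2$ could be negative for $r$ --- this is harmless only because walks from $a$ to $b$ traverse $r$ as a single arc (type $u_1$, $u_2$ or $v$), whose weight is always positive, which is exactly what makes the bound above go through.
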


The above applies both when $a,b$ are on the same side of~$\S_T(\Theta)$ as when they are on different sides. 
In the proof below, we make no particular assumption on the position of $a$ and $b$ other than that they are on the boundary. 

\begin{proof}
    Let $\S_T(\Theta)$ be a strip as in the statement of the proposition and $a,b$ be two points on its boundary.
    For $1 \leq i < T$ denote by $\tau_i$ the transposition of $i$ and $i+1$ and by $\Theta \circ \tau_i$ the sequence with $\theta_i$ and $\theta_{i+1}$ transposed:
    \begin{align*}
    	\Theta \circ \tau_i = (\theta_1, \dots,\theta_{i-1}, \theta_{i+1},\theta_i,\theta_{i+2},\dots \theta_T).
    \end{align*}
    In order to prove the proposition, it is sufficient to show that the partition function in $G(a,b)$ in $\S_T(\Theta)$ is equal to the one in $\S_T(\Theta\circ \tau_i)$. 

    This is done by means of the Yang--Baxter transformation, which transforms the rhombic tiling while preserving the partition function (see Section~\ref{sec:YangBaxter} and references therein for more details).
    
    Fix two points~$a$ and~$b$ on the boundary of~$\S_T(\Theta)$ and~$\varepsilon>0$.
    For the sake of this proof, if~$D$ denotes a simply connected subset of faces of $\S_T(\Theta)$ or $\S_T(\Theta\circ\tau_i)$ 
    that contains $a$ and $b$ on its boundary, 
    then write $G_{D}(a,b)$ for the two point function of walks in $D$ from $a$ to $b$: 
    \begin{align*}
    	G_D(a,b) =  \sum_{\substack{\gamma \text{ from $a$ to $b$};\\ \gamma \subset D}} \weight(\gamma).
    \end{align*}
    First observe that there exists $L >0$ such that 
    \begin{align*}
    	G_{\S_T(\Theta)}(a,b) - \varepsilon &\leq 	G_{\Rect_{T,L}(\Theta)}(a,b) \leq G_{\S_T(\Theta)}(a,b) \qquad \text{ and }\\
    	G_{\S_T(\Theta\circ \tau_i)}(a,b) - \varepsilon &\leq 	G_{\Rect_{T,L}(\Theta\circ \tau_i)}(a,b) \leq G_{\S_T(\Theta\circ \tau_i)}(a,b). 
    \end{align*}
    (Above we used that the 2-point function is finite, which is the case due to \eqref{eq_relation_strip}.) 
    Without loss of generality, we may suppose $\theta_i < \theta_{i+1}$ \footnote{If $\theta_i > \theta_{i+1}$, the rhombus may be added at the bottom and will be slid to the top using Yang--Baxter transformations. If $\theta_i = \theta_{i+1}$ the result is trivial. }.
    Let $D_0$ be the graph obtained by adding a rhombus~$r$ to $ \Rect_{T,L}(\Theta)$ at the top of the columns $i$ and $i+1$. 
    Precisely, the added rhombus has two sides equal to the top sides of the columns $i$ and $i+1$; 
    the condition $\theta_i < \theta_{i+1}$ ensures that $r$ does not overlap with the rhombi of $\Rect_{T,L}(\Theta)$, 
    and $D_0$ is a rhombic tiling (see Fig.~\ref{fig:saw-YB}). 
    Then we have 
    \begin{align*}
    	G_{D_0}(a,b) - G_{\Rect_{T,L}(\Theta)}(a,b)
		= \sum_{\substack{\gamma:a \to b \\ \gamma \text{ uses $r$}}} \weight(\gamma).
    \end{align*}
    A path $\gamma$ contributing to the above traverses $r$ only as one arc, hence always has positive weight. 
    In particular, $G_{D_0}(a,b) \geq G_{\Rect_{T,L}(\Theta)}(a,b)$.
    
    On the other hand, to any $\gamma$ as in the sum above, associate the walk $\gamma'$ in $\Rect_{T,L+1}(\Theta)$ that connects $a$ to $b$, 
    obtained by keeping the same configuration in $\Rect_{T,L}(\Theta)$ as in $D_0$ 
    and replacing the one arc in $r$ by two arcs in the top row of $\Rect_{T,L+1}(\Theta)$. 
    Then the ratio of the weight of $\gamma$ and $\gamma'$ is bounded above by some universal constant $c$.
    Thus 
    \begin{align*}
    	G_{D_0}(a,b) - G_{\Rect_{T,L}(\Theta)}(a,b)
		\leq c \big(G_{\Rect_{T,L+1}(\Theta)}(a,b) - G_{\Rect_{T,L}(\Theta)}(a,b)\big) <  c \cdot \varepsilon.
    \end{align*}
    
    Apply the Yang-Baxter transformation to the added rhombus and the two rhombi adjacent to it (notice that these indeed form a hexagon). 
    This in effect slides the added rhombus one unit down (see fig.~\ref{fig:saw-YB}). Call $D_1$ the resulting graph and conclude that 
    $$G_{D_0}(a,b) =     G_{D_1}(a,b).$$
	The operation may be repeated to slide the added rhombus one more unit downwards. 
	Performing $2L$ such Yang--Baxter transformations leads to 
	$$G_{D_0}(a,b) = G_{D_{2L}}(a,b),$$
	where $D_{2L}$ is the rhombic tiling $\Rect_{T,L}(\Theta\circ \tau_i)$ with the additional added rhombus at the bottom of columns $i$ and $i+1$. 
	
	By the same reasoning as above, 
	\begin{align*}
		0 \leq G_{D_{2L}}(a,b) - G_{\Rect_{T,L}(\Theta \circ \tau_i)}(a,b)
		\leq c \cdot \varepsilon.
	\end{align*}
	Thus, we conclude that 
	\begin{align*}
		c \cdot \varepsilon&> |G_{D_{2L}}(a,b) - 	G_{\Rect_{T,L}(\Theta \circ \tau_i)}(a,b) | \\
		&= |G_{D_{0}}(a,b) - 	G_{\Rect_{T,L}(\Theta \circ \tau_i)}(a,b) |\\
		&\geq 
		 | G_{\Rect_{T,L}(\Theta)}(a,b)  - G_{\Rect_{T,L}(\Theta \circ \tau_i)}(a,b) | - 
		  |G_{D_{0}}(a,b) - 	G_{\Rect_{T,L}(\Theta)}(a,b) |.
	\end{align*}
	The last term above is also bounded by $c\cdot\varepsilon$, and we find
	\begin{align*}
		 | G_{\S_{T}(\Theta)}(a,b) - G_{\S_{T}(\Theta \circ \tau_i)}(a,b) | 
		& \leq 
		 | G_{\S_{T}(\Theta)}(a,b)  - G_{\Rect_{T,L}(\Theta)}(a,b) | \\
		 &+| G_{\Rect_{T,L}(\Theta)}(a,b)  - G_{\Rect_{T,L}(\Theta \circ \tau_i)}(a,b) |\\
		 &+| G_{\S_{T}(\Theta\circ \tau_i)}(a,b)  - G_{\Rect_{T,L}(\Theta \circ \tau_i)}(a,b) | \leq  (2+2c)\varepsilon. 
	\end{align*}
	Since $\varepsilon$ may be chosen arbitrarily small, we find 
	$ G_{\S_{T}(\Theta)}(a,b) = G_{\S_{T}(\Theta \circ \tau_i)}(a,b) $, which is the desired conclusion. 
%
\end{proof}

Lemma~\ref{prop:saw-YB-strip} allows us to exchange columns of different angles but it does not permit to change the angles.
Next lemma deals with this question and tells us that the 2-point function in a strip decreases when one of the angles is replaced by~$\pi/3$.

\begin{lemma}\label{lem:saw-strip-monotonicity}
    Let $\Theta = (\theta_1,\dots, \theta_T)$ be a finite sequence of angles with $\theta_k \in [\pi/3,2\pi/3]$ for all $k$. 
	Then for any two points $a,b$ on the left boundary of $\S_{T}(\Theta)$ we have 
	\begin{align*}
		G_{\S_T(\Theta)} (a,b) \geq G_{\S_T(\theta_1,\theta_2,\dots,\theta_{T-1},\pi/3)} (a,b).
	\end{align*}
\end{lemma}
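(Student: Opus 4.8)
The plan is to reduce the angle $\theta_T$ of the last column to $\pi/3$ by a continuous deformation and show that $G_{\S_T(\Theta)}(a,b)$ is monotone in $\theta_T$. Since $a,b$ lie on the left boundary, we are free to place the distinguished column last (this is legitimate by Proposition~\ref{prop:saw-YB-strip}, which lets us permute the columns without changing the two-point function). So it suffices to fix $\theta_1,\dots,\theta_{T-1}$ and study $\phi \mapsto G_{\S_T(\theta_1,\dots,\theta_{T-1},\phi)}(a,b)$ for $\phi \in [\pi/3, \theta_T]$; we want this to be nondecreasing in $\phi$ and then specialise to $\phi=\pi/3$ against $\phi=\theta_T$.

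First I would reorganise the sum over walks according to how they interact with the last column. A walk $\gamma$ in $\S_T(\Theta)$ enters the $T$-th column through some rhombi, and inside each such rhombus it uses one of the six local patterns of Fig.~\ref{figWeights}. Group the walks by their trace on columns $1,\dots,T-1$ and by the combinatorial pattern of their visits to the last column (which rhombi are entered and through which pair of mid-edges); for each such combinatorial type the contribution factors as (a weight coming from columns $1,\dots,T-1$, independent of $\phi$) times a product over the visited rhombi of the last column of local weights, each being one of $1, u_1(\phi), u_2(\phi), v(\phi), w_1(\phi), w_2(\phi)$. Thus $G_{\S_T(\Theta)}(a,b)$ is a nonnegative-coefficient combination of monomials in these six functions of $\phi$. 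It would then suffice to establish the differential inequality $\frac{d}{d\phi} G \ge 0$, or more robustly to compare each combinatorial type at $\phi$ and at $\pi/3$. The clean way is: for $\phi=\pi/3$ one has $w_2=0$ and $v=w_1=u_2=u_1^2$ with $u_1 = 1/\sqrt{2-\sqrt2}$ (as recalled after Fig.~\ref{fig:split}); and one checks from the explicit formulas \eqref{eq_w2c} that on $[\pi/3,2\pi/3]$ each of $u_1(\phi), u_2(\phi), v(\phi), w_1(\phi), w_2(\phi)$ is $\ge$ its value at $\pi/3$ — equivalently, that every monomial appearing in the last-column contribution dominates the corresponding monomial at $\pi/3$. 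Combined with the matching of combinatorial types between $\S_T(\theta_1,\dots,\theta_{T-1},\phi)$ and $\S_T(\theta_1,\dots,\theta_{T-1},\pi/3)$ (the columns $1,\dots,T-1$ are identical, and when $\phi=\pi/3$ the last column splits into equilateral triangles so each walk pattern there has a canonical pre-image), this yields $G_{\S_T(\Theta)}(a,b) \ge G_{\S_T(\theta_1,\dots,\theta_{T-1},\pi/3)}(a,b)$, which is exactly the claim.

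There is a subtlety I would handle carefully: when $\phi$ is not $\pi/3$, a walk may traverse a rhombus of the last column twice (patterns $v, w_1, w_2$), whereas after splitting at $\phi=\pi/3$ the same rhombus becomes two triangles each used at most once. So the "matching of types" is not literally a bijection between walks but between combinatorial skeletons; what one really compares is, type by type, a monomial in $\{u_1(\phi),u_2(\phi),v(\phi),w_1(\phi),w_2(\phi)\}$ against the corresponding monomial at $\pi/3$, where the latter collapses (e.g. $v(\pi/3)=u_1(\pi/3)^2$, $w_2(\pi/3)=0$). The inequality $w_2(\phi)\ge 0 = w_2(\pi/3)$ handles the double-back-with-crossing patterns trivially, and $v(\phi)\ge v(\pi/3)=u_1(\pi/3)^2$, $w_1(\phi)\ge w_1(\pi/3)$, $u_2(\phi)\ge u_2(\pi/3)$, $u_1(\phi)\ge u_1(\pi/3)$ handle the rest. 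These five one-variable inequalities on $[\pi/3,2\pi/3]$ are elementary (they follow from monotonicity of the relevant ratios of sines, using $3\phi/8 \in [\pi/8,\pi/4]$), and verifying them is the only real computation.

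The main obstacle I anticipate is the bookkeeping of how a walk's trace on the last column factorises the weight and, relatedly, making the "type-by-type domination" argument airtight when walks double-back through a last-column rhombus — one must be sure that no walk pattern in $\S_T(\theta_1,\dots,\theta_{T-1},\pi/3)$ fails to be reached, and that the nonnegative column-$1,\dots,T-1$ prefactors are genuinely common to both. Once the combinatorial decomposition is set up correctly, the analytic part (the five monotonicity inequalities from \eqref{eq_w2c}) is routine, and the conclusion follows since $G_{\S_T}$ is finite by \eqref{eq_relation_strip}, so all the rearrangements of these nonnegative series are justified.
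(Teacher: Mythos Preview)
Your approach has a genuine gap: the claim that each of $u_1(\phi), u_2(\phi), v(\phi), w_1(\phi), w_2(\phi)$ dominates its value at $\pi/3$ on $[\pi/3,2\pi/3]$ is false. By the symmetry noted just after \eqref{eq_w2c}, replacing $\theta$ by $\pi-\theta$ swaps $u_1\leftrightarrow u_2$ and $w_1\leftrightarrow w_2$; hence $u_1(2\pi/3)=u_2(\pi/3)=u_1(\pi/3)^2<u_1(\pi/3)$ (since $u_1(\pi/3)=1/\sqrt{2+\sqrt2}<1$), and $w_1(2\pi/3)=w_2(\pi/3)=0<w_1(\pi/3)$. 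So $u_1$ and $w_1$ strictly decrease on $[\pi/3,2\pi/3]$, and termwise domination of individual local weights cannot work.

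What does work --- and this is the paper's argument --- is to exploit the structure of the last-column trace more finely. Since $a,b$ lie on the left boundary, the right edge $z_E$ of every rhombus in column $T$ is inaccessible, so no $w_1$ or $w_2$ pattern ever occurs there. The intersection of any walk $\gamma$ with the rightmost column is therefore a disjoint union of excursions, each of which enters at some $z_W$, travels vertically through $k\ge 0$ rhombi of type $v$, and exits at another $z_W$; the two turning rhombi contribute exactly one $u_1$ and one $u_2$. Thus the $\phi$-dependence of $\weight_\Theta(\gamma)$ factors as a product of blocks $u_1(\phi)\,v(\phi)^k\,u_2(\phi)$, one per excursion, and a direct computation from \eqref{eq_w2c} shows that this particular product is minimised at $\phi=\pi/3$ for every $k\ge 0$. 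Your decomposition by last-column trace is the right starting point, but the analytic comparison must be carried out on these excursion blocks rather than on individual rhombi.
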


\begin{proof}
    Let $T,\Theta, a,b$ be as in the statement. 
    Write $\tilde\Theta $ for the sequence $(\theta_1,\theta_2,\dots,\theta_{T-1},\pi/3)$.
    We will show that any self-avoiding walk $\gamma$ from~$a$ to~$b$ in~$\S_T(\Theta)$ 
    has either the same or larger weight than its correspondent walk in $\S_T(\tilde \Theta)$.
    
    \begin{figure}
  	  	\centering
      	\includegraphics[scale=0.75]{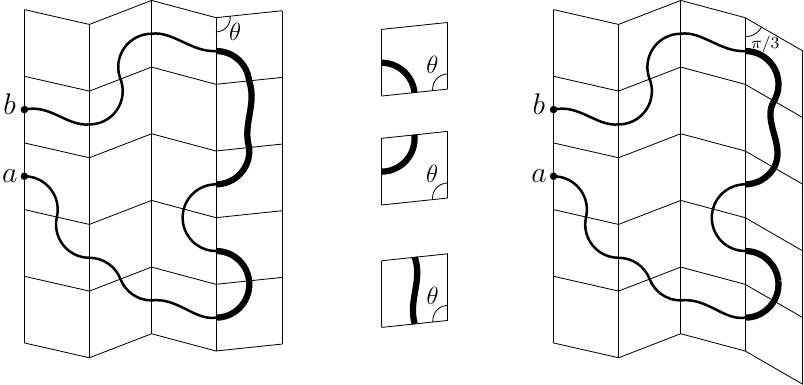}
    	\caption{An arc in $\S_T(\Theta)$ (left) and the corresponding arc in $\S_T(\tilde \Theta)$ (right). 
	The difference in weight comes from three types of rhombi depicted in the middle. The first two come in pairs and their combined weight is lowest when $\theta = \pi/3$; the third one has lowest weight when $\theta = \pi/3$.  }
    	\label{fig:YB_application}
	\end{figure}

    Indeed, consider any such walk~$\gamma$ in~$\S_T(\Theta)$. 
    The intersection of $\gamma$ with the rightmost column of~$\S_T(\Theta)$ 
    is formed of a family of disjoint arcs, as depicted in Fig. \ref{fig:YB_application}.
    Write $\chi_1,\dots, \chi_\ell$ for these arcs (take $\ell = 0$ if $\gamma$ does not visit column $T$). 
    The weight of each such arc only depends on $\theta_T$:
    an arc $\chi_j$ is formed of a rhombus of type $u_1$, a number $k \geq 0$ of rhombi or type $v$ 
    and one rhombus of type $u_2$; its weight is then 
    \begin{align}\label{eq:arc_weight}
    \weight_{\theta_T}(\chi_j) =  
    \frac{{\sin(\frac{5\pi}{4})\sin(\frac{5\pi}{8}+\frac{3\theta_T}{8})}
    \big[{\sin(\frac{5\pi}{8}+\frac{3\theta_T}{8})\sin(-\frac{3\theta_T}{8})}\big]^k
    {\sin(\frac{5\pi}{4})\sin(\frac{3\theta_T}{8})}}
    {\big[\sin(\frac{5\pi}{4}+\frac{3\theta_T}{8})\sin(\frac{5\pi}{8}-\frac{3\theta_T}{8})\big]^{k+2}}
    \end{align}
   	Moreover, the difference of the weight of $\gamma$ in $\S_T(\Theta)$ and $\S_T(\tilde \Theta)$ 
	comes only from the arcs $\chi_1,\dots, \chi_\ell$: 
	\begin{align*}
		\frac{\weight_\Theta(\gamma)}{\weight_{\tilde \Theta}(\gamma)} = \prod_{j=1}^\ell \frac{\weight_{\theta_T}(\chi_j)}{\weight_{\pi/3}(\chi_j)}.
	\end{align*}
	A direct computation shows that, for any $k \geq 0$, the weight in \eqref{eq:arc_weight} is minimised when $\theta_T = \pi/3$. 
	Thus, all terms in the right-hand side of the above equality are greater than $1$, and the conclusion is reached. 
\end{proof}

\begin{corollary}
	Let $\Theta = (\theta_1,\dots, \theta_T)$ be a finite sequence of angles with $\theta_k \in [\pi/3,2\pi/3]$ for all~$k$. 
	Then for any two points $a,b$ on the left boundary of $\S_{T}(\Theta)$ we have 
	\begin{align}\label{eq:strip_bound1}
		G_{\S_T(\Theta)} (a,b) \geq G_{\S_T(\pi/3,\theta_1,\theta_2,\dots,\theta_{T-1})} (a,b).
	\end{align}
	Additionally, 
	\begin{align}\label{eq:strip_bound2}
		G_{\S_T(\Theta)} (a,b) \geq G_{\S_T(\pi/3)} (a,b),
	\end{align}
	where the right hand side is the strip of width $T$ with all angles equal to $\pi/3$. 
\end{corollary}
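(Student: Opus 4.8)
The plan is to derive both inequalities purely by combining the two facts already in hand: Proposition~\ref{prop:saw-YB-strip}, which lets us permute the columns of a strip without changing the 2-point function, and Lemma~\ref{lem:saw-strip-monotonicity}, which lets us replace the \emph{last} angle of the strip by $\pi/3$ at the cost of only decreasing the 2-point function. Since $\pi/3$ is an endpoint of $[\pi/3,2\pi/3]$, every intermediate angle sequence produced this way is admissible, so both results keep applying. The guiding observation is that these two moves together allow us to shrink \emph{any one} column to angle $\pi/3$: bring it to the rightmost position via Proposition~\ref{prop:saw-YB-strip}, reduce its angle via Lemma~\ref{lem:saw-strip-monotonicity}, and, if desired, permute it back.

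For \eqref{eq:strip_bound1}: the sequence $\Theta=(\theta_1,\dots,\theta_T)$ already has $\theta_T$ as its last angle, so Lemma~\ref{lem:saw-strip-monotonicity} gives $G_{\S_T(\Theta)}(a,b)\ge G_{\S_T(\theta_1,\dots,\theta_{T-1},\pi/3)}(a,b)$, and then Proposition~\ref{prop:saw-YB-strip} — shifting the last column to the front — identifies the right-hand side with $G_{\S_T(\pi/3,\theta_1,\dots,\theta_{T-1})}(a,b)$. Here one should note that permuting columns does not move the left boundary of $\S_T$, so the two boundary points $a,b$ stay fixed and the hypothesis of Lemma~\ref{lem:saw-strip-monotonicity} that $a,b$ lie on the left boundary remains in force.

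For \eqref{eq:strip_bound2} we iterate \eqref{eq:strip_bound1}. Set $\Theta^{(0)}=\Theta$ and, inductively, suppose $\Theta^{(j)}=(\pi/3,\dots,\pi/3,\phi_1,\dots,\phi_{T-j})$ has $j$ leading copies of $\pi/3$ with the $\phi_i$ among the original angles (all in $[\pi/3,2\pi/3]$). Applying \eqref{eq:strip_bound1} to the sequence $\Theta^{(j)}$ gives
\[
  G_{\S_T(\Theta^{(j)})}(a,b)\ \ge\ G_{\S_T(\Theta^{(j+1)})}(a,b),
  \qquad
  \Theta^{(j+1)}:=(\underbrace{\pi/3,\dots,\pi/3}_{j+1},\phi_1,\dots,\phi_{T-j-1}).
\]
After $T$ such steps $\Theta^{(T)}=(\pi/3,\dots,\pi/3)$, and chaining the inequalities yields $G_{\S_T(\Theta)}(a,b)\ge G_{\S_T(\pi/3)}(a,b)$, which is \eqref{eq:strip_bound2}.

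There is no real obstacle here, as all the analytic work is already done by Proposition~\ref{prop:saw-YB-strip} and Lemma~\ref{lem:saw-strip-monotonicity}; the only point deserving explicit mention is the bookkeeping remark that reordering columns leaves the left boundary — and hence the points $a,b$ and the applicability of Lemma~\ref{lem:saw-strip-monotonicity} — unchanged throughout the iteration.
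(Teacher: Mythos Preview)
Your proof is correct and follows exactly the same route as the paper: apply Lemma~\ref{lem:saw-strip-monotonicity} to drop the last angle to $\pi/3$, use Proposition~\ref{prop:saw-YB-strip} to cycle that $\pi/3$ to the front, and then iterate to obtain \eqref{eq:strip_bound2}. Your added remarks about the left boundary being fixed under column permutations and about all intermediate sequences staying in $[\pi/3,2\pi/3]$ are a nice bit of extra care, but the argument is otherwise identical to the paper's.
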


\begin{proof}
	With the notation above, Lemma~\ref{lem:saw-strip-monotonicity} states that 
	$$ G_{\S_T(\Theta)} (a,b) \geq G_{\S_T(\theta_1,\theta_2,\dots,\theta_{T-1},\pi/3)} (a,b).$$
	Apply Proposition \ref{prop:saw-YB-strip} to deduce that 
	$$ G_{\S_T(\pi/3,\theta_1,\theta_2,\dots,\theta_{T-1})} (a,b) = G_{\S_T(\theta_1,\theta_2,\dots,\theta_{T-1},\pi/3)} (a,b).$$
	This proves the first bound \eqref{eq:strip_bound1}. 
	To obtain \eqref{eq:strip_bound2} it suffices to apply repeatedly \eqref{eq:strip_bound1}.
\end{proof}

Now we are ready to prove Theorem~\ref{thm-saw-2-point}.

\begin{proof}[Proof of Theorem~\ref{thm-saw-2-point}]
	Recall \eqref{eq_relation_strip}: $\cos{\tfrac{3\pi}{8}}A_{T,\Theta} = 1 - B_{T,\Theta}$ for any $T$ and sequence $\Theta$. 
	Applying the above to the constant sequence $\pi/3$ and keeping in mind Proposition \ref{prop:saw-bridges-hex}, we find
	\begin{align*}
		 A_{T,\pi/3}  \to \Big(\cos{\tfrac{3\pi}{8}}\Big)^{-1}, \qquad \text{ as $T \to \infty$}. 
	\end{align*}
	Now apply \eqref{eq:strip_bound2} to deduce that 
	\begin{align*}
		A_{T,\Theta} = \sum_{ L \in \Z} G_{\S_T(\Theta)}(0,L) \geq \sum_{ L \in \Z} G_{T,(\pi/3)}(0,L)= A_{T,(\pi/3)}. 
	\end{align*}
	Thus $\lim_{T\to \infty}A_{T,\Theta} \geq  \big(\cos{\tfrac{3\pi}{8}}\big)^{-1}$.
	However, from  \eqref{eq_relation_strip} applied to $\Theta$, 
	we find $A_{T,\Theta} \leq \big(\cos{\tfrac{3\pi}{8}}\big)^{-1}$ for all $T$.
	Thus  
	\begin{align*}
	\sum_{ L \in \Z} G_{\Theta}(0,L) = \sum_{ L \in \Z} \lim_{T \to \infty } G_{\S_T(\Theta)}(0,L)  =\lim_{T \to \infty }\sum_{ L \in \Z} G_{\S_T(\Theta)}(0,L)   &= \Big(\cos{\tfrac{3\pi}{8}}\Big)^{-1} \\
	&= \sum_{L\in \Z}G_{\pi/3}(0,L).
	\end{align*}
	Considering that 
	$$G_{\Theta}(0,L)\geq \lim_{T\to \infty}G_{T,\Theta}(0,L) \geq \lim_{T\to \infty}G_{T,(\pi/3)}(0,L) \geq 
	 G_{\pi/3}(0,L) \qquad \text{ for all $L \in \Z$}, $$
	we conclude that $G_{\Theta}(0,L)= G_{\pi/3}(0,L)$ for all $L$. 
	Finally, using the invariance $G_{\Theta}(a,b) = G_{\Theta}(0,b-a)$, we obtain the desired conclusion. 
\end{proof}


\subsection{Proof of Theorem~\ref{thm-saw-bridges} for general tilings}

\begin{proof}[{Proof of Theorem~\ref{thm-saw-bridges}}]
	By \eqref{eq_relation_strip}
	$$B_{T,\Theta} = 1 - \cos{\tfrac{3\pi}{8}}A_{T,\Theta}.$$
	We have shown in the previous proof that $A_{T,\Theta} \to \big(\cos{\tfrac{3\pi}{8}}\big)^{-1}$ as $T\to \infty$, which implies 
	$B_{T,\Theta} \to 0$. 
\end{proof}

\section{Critical surface fugacity}\label{sec:fugacity}

In this section we discuss self-avoiding walks with surface fugacities and prove Theorem~\ref{thm-fugacity} and Proposition~\ref{prop:y_c_strip}. 
We split the proof into several steps. First we introduce a slightly different notion of critical fugacity for walks in a strip, denoted $y_c^*(T,\Theta)$; this is then shown to be equal to $y_c(T,\Theta)$ defined in the introduction. 
Using the Yang--Baxter transformation, we show that the limit of $y_c^*(T,\Theta)$ as $T \to \infty$ does not depend on the sequence $\Theta$; in particular it is equal to that when $\Theta = \pi/3$, which is known to be equal to $1 + \sqrt 2$. 
Finally, it is shown that the critical fugacity of Theorem \ref{thm-fugacity} is indeed equal to $\lim_{T \to \infty }y_c^*(T,\Theta)$.

%

\subsection{Critical fugacity in the strip at $x = 1$}


When defining the critical fugacity in a strip, one may consider partition functions of walks, arcs or bridges. 
Below we show that the exact choice has little importance. 

For $\Theta = (\theta_k)_{1 \leq k \leq T}$ with $\theta_1 = \pi/3$ and all other angles in $[\pi/3, 2\pi/3]$,
recall the notation~\eqref{eq:weight-modified}
\[
    \weight_\Theta (\gamma;x,y) = \weight_\Theta (\gamma)\cdot x^{|\gamma|}\cdot y^{b(\gamma)}\,, \quad 
    \SAW_{T,\Theta} (x,y) = \sum_{\substack{\gamma \text{ starts at }0 \\ \gamma\subset \S_T(\Theta) }}{\weight_\Theta(\gamma;x,y)}\,.
\]
where~$|\gamma|$ is the length of~$\gamma$ and~$b(\gamma)$ is the number of visits of $\gamma$ to the left half of the rhombi adjacent to the left boundary of $\S_T(\Theta)$.


The partition functions of arcs and bridges are defined in a similar way and denoted by~$A_{T,\Theta} (x,y)$ and~$B_{T,\Theta} (x,y)$. 
Observe that for any self-avoiding walk $\gamma$ (that is starting and ending at any points of $\S_T(\Theta)$), its weight $\weight_{\Theta} (\gamma;x,y)$ may be defined as above. 

\begin{proposition}\label{prop:same_rad_conv}
    Let~$\Theta = \{\theta_1, \theta_2,\dots,\theta_T\}$, 
    where~$\theta_1 = \tfrac{\pi}{3}$ and~$\theta_i \in [\tfrac{\pi}{3},\tfrac{2\pi}{3}]$ for~$i>1$.
    Then the following series (with variable $y$) have the same radius of convergence:
    \[
    	A_{T,\Theta} (1, y) , \, B_{T,\Theta} ( 1, y)  , \, \SAW_{T,\Theta} (1, y)\,.  
    \]
    Write $y_c^*(T,\Theta)$ for the radius of convergence of the series above. 
\end{proposition}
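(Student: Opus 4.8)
The plan is to show a chain of inequalities between the three series that forces their radii of convergence to coincide. Since all three power series have non-negative coefficients (the weights $\weight_\Theta(\gamma;1,y)$ are non-negative and $b(\gamma)\ge 0$), their radii of convergence are governed by exponential growth rates, so it suffices to sandwich each series by a constant (or sub-exponential factor) times another. The trivial inclusions give $B_{\Theta,T}(1,y) \le A_{\Theta,T}(1,y) \le \SAW_{\Theta,T}(1,y)$ coefficient-wise, since every bridge is an arc and every arc is a walk starting at $0$, and the weight of a walk only picks up extra factors from the portions not present in a sub-walk (here one must be a little careful: a bridge is a walk from $0$ to the right boundary that stays in the strip; an arc ends on the left boundary; both are walks starting at $0$, so $B \le \SAW$ and $A \le \SAW$ are immediate, while $B$ versus $A$ is not an inclusion and should be handled by the reverse bound below rather than claimed directly). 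Hence $y_c^*$ in the sense of $\SAW$ is the smallest of the three radii.

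For the reverse direction I would bound $\SAW_{\Theta,T}(1,y)$ above in terms of $B_{\Theta,T}(1,y)$ (or $A_{\Theta,T}(1,y)$). The key observation is that an arbitrary self-avoiding walk $\gamma$ in the strip $\S_T(\Theta)$ starting at $0$ can be extended, by appending at most a bounded number of arcs (at most $T$ rhombi, say), to reach either the left or the right boundary, turning it into an arc or a bridge; conversely, given an arc or bridge $\gamma'$, deleting a bounded final segment recovers $\gamma$. Each appended rhombus multiplies the weight by a factor bounded below by a universal constant $c>0$ and above by a universal constant $c'$; moreover appending rhombi near the left boundary can increase $b(\cdot)$ by at most a constant depending only on $T$, contributing a factor between $y^{-cT}$ and $y^{cT}$ which, crucially, does not depend on $|\gamma|$. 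The map $\gamma \mapsto \gamma'$ is at most boundedly-many-to-one in each direction after we also specify the (bounded) completion data. This yields, for a fixed $T$ and fixed $y$, inequalities of the form
\[
	\SAW_{\Theta,T}(1,y) \le C(T,y)\, B_{\Theta,T}(1,y) + (\text{lower-order terms}),
\]
with $C(T,y)$ finite and independent of the length grading, so the radii of convergence of $\SAW_{\Theta,T}(1,\cdot)$ and $B_{\Theta,T}(1,\cdot)$ agree; the same argument with arcs gives equality with $A_{\Theta,T}(1,\cdot)$ as well.

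The main obstacle is making the completion/truncation argument respect the length grading and the fugacity grading \emph{simultaneously}: one must ensure that the bounded completion used to turn a walk into a bridge (or arc) changes $|\gamma|$ and $b(\gamma)$ by amounts bounded uniformly in $\gamma$ (depending only on $T$), so that the comparison constant $C(T,y)$ is a genuine constant and not something growing with $|\gamma|$. This is where the finiteness of the width $T$ is essential, and where one should be careful about whether the completing path can be chosen self-avoiding and disjoint from $\gamma$ — it can, by routing it along the boundary of the strip, but this needs to be stated. A secondary subtlety is that the endpoint of an arbitrary walk may be at a midedge in the interior, not on a boundary column, but since the strip has only $T$ columns, a monotone path to the nearest vertical boundary has length at most a constant times $T$, which is all we need. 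I would then conclude by recalling that for power series with non-negative coefficients, $R(f) = R(g)$ whenever $c_1 g(y) \le f(y) \le c_2 g(y) + (\text{polynomial})$ for all $y$ in a neighbourhood of the origin, which finishes the proof and justifies the notation $y_c^*(T,\Theta)$ for the common radius of convergence.
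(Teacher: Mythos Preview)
Your high-level strategy --- sandwich the three series, get the easy direction from inclusion, and for the hard direction turn a walk into a bridge (or arc) at bounded cost --- matches the paper's. The gap is in the hard direction. You propose to convert an arbitrary walk $\gamma$ into a bridge by appending a completion of length $O(T)$ running to a boundary, and you assert this completion can be made self-avoiding and disjoint from $\gamma$ ``by routing it along the boundary of the strip''. But the endpoint of $\gamma$ need not be near a boundary, and a self-avoiding walk in a strip of width $T\ge 3$ can spiral around its own endpoint and \emph{trap} it: every neighbouring midedge is already used by $\gamma$, so no self-avoiding extension exists at all. (On the ordinary square-lattice strip $\{0,1,2,3\}\times\Z$, the walk
\[
(0,0)\!\to\!(3,0)\!\to\!(3,3)\!\to\!(0,3)\!\to\!(0,1)\!\to\!(2,1)\!\to\!(2,2)\!\to\!(1,2)
\]
ends at $(1,2)$ with all four neighbours visited; the same phenomenon occurs in the rhombic model.) For such $\gamma$ your map $\gamma\mapsto\gamma'$ is undefined, so the inequality $\SAW_{\Theta,T}(1,y)\le C(T,y)\,B_{\Theta,T}(1,y)$ is not established. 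Your remark that ``a monotone path to the nearest vertical boundary has length at most a constant times $T$'' is true, but that path may well intersect $\gamma$, which is exactly the issue.

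The paper resolves this with a Hammersley--Welsh type decomposition: split $\gamma$ at its successive extremal rows into at most $2T-1$ pieces $\gamma_{-k},\dots,\gamma_\ell$, each of which has its two endpoints in its own bottommost and topmost occupied rows. For \emph{such} pieces your completion idea does work: a horizontal segment added at an extremal row cannot meet $\gamma_i$, since nothing of $\gamma_i$ lies beyond that row. Completing each $\gamma_i$ to a bridge $\gamma_i^{\mathrm{br}}$ in this way costs a factor bounded in terms of $T$, and since the number of pieces is at most $2T-1$ one obtains
\[
\SAW_{\Theta,T}(1,y)\ \le\ C(T,y)\,\big(1+B_{\Theta,T}(1,y)\big)^{2T-1},
\]
which is what yields the equality of radii. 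In short, the missing idea is not the completion but the preliminary splitting that guarantees the endpoints are extremal, so that a self-avoiding completion is available.
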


\begin{proof}

The set of walks starting at $0$ includes the sets of arcs and bridges. Hence, for any $y >0$, we have:
\begin{align*}
    &\SAW_{T,\Theta} (1, y) \geq A_{T,\Theta} (1, y), \, B_{T,\Theta} (1, y)\, .
\end{align*}
Thus, the radius of convergence of $\SAW_{T,\Theta} (1, y)$ is smaller than those of $A_{T,\Theta} (1, y)$ and $B_{T,\Theta} (1, y)$.

In order to obtain opposite bounds, we use the decomposition of walks into bridges that was introduced by Hammersley and Welsh \cite{HW}. 
We prove the bound only for $B_{T,\Theta} (1, y)$, as for~$A_{T,\Theta} (1, y)$ the proof is completely analogous. For~$T=1$ the statement is obvious, so below we assume that~$T>1$.

Consider a walk~$\gamma$ in $\S_T(\Theta)$ starting at~$0$; $\gamma$ will be split into subpaths $\gamma_{-k},\dots, \gamma_\ell$ as described below. The decomposition is illustrated in Fig. \ref{fig:SAWA}.
Set the lowest (resp. highest) point of~$\gamma$ to be the non-empty rhombus with the smallest (resp. largest) second coordinate, and if several such rhombi exists, it is the leftmost (resp. rightmost) among them. 
Denote these rhombi by~$r_{\mathrm{bot}}$ and~$r_{\mathrm{top}}$ 
and let~$\gamma_0$ be the subpath of~$\gamma$ that links~$r_{\mathrm{bot}}$ and~$r_{\mathrm{top}}$ 
($\gamma_0$ includes $r_{\mathrm{bot}}$ or~$r_{\mathrm{top}}$ only if these are endpoints of $\gamma$).
Then~$\gamma \setminus \gamma_0$ is either empty, or one walk, or a union of two walks, depending on how many of the endpoints of~$\gamma$ are contained in~$\gamma_0$. If~$\gamma = \gamma_0$, the decomposition stops.
Otherwise write $\gamma^-$ for the part of $\gamma$ preceding $\gamma_0$ and $\gamma^+$ for the part following $\gamma_0$. 
We continue by decomposing $\gamma^+$ and $\gamma^-$ in the same fashion:
Suppose $\gamma^+$ is not empty and consider its lowest and the highest points. Define~$\gamma_{1}$ as the segment between these points. 
Note that now $\gamma^+ \setminus \gamma_1$ is formed of at most one walk, not two. 
Continue decomposing $\gamma^+ \setminus \gamma_1$ to obtain $\gamma_2$ etc, until the remaining walk is empty. 
Apply the same procedure to decompose $\gamma^-$ into $\gamma_{-1}, \gamma_{-2},$ etc.

\begin{figure}
    \centering
      \includegraphics[scale=0.64]{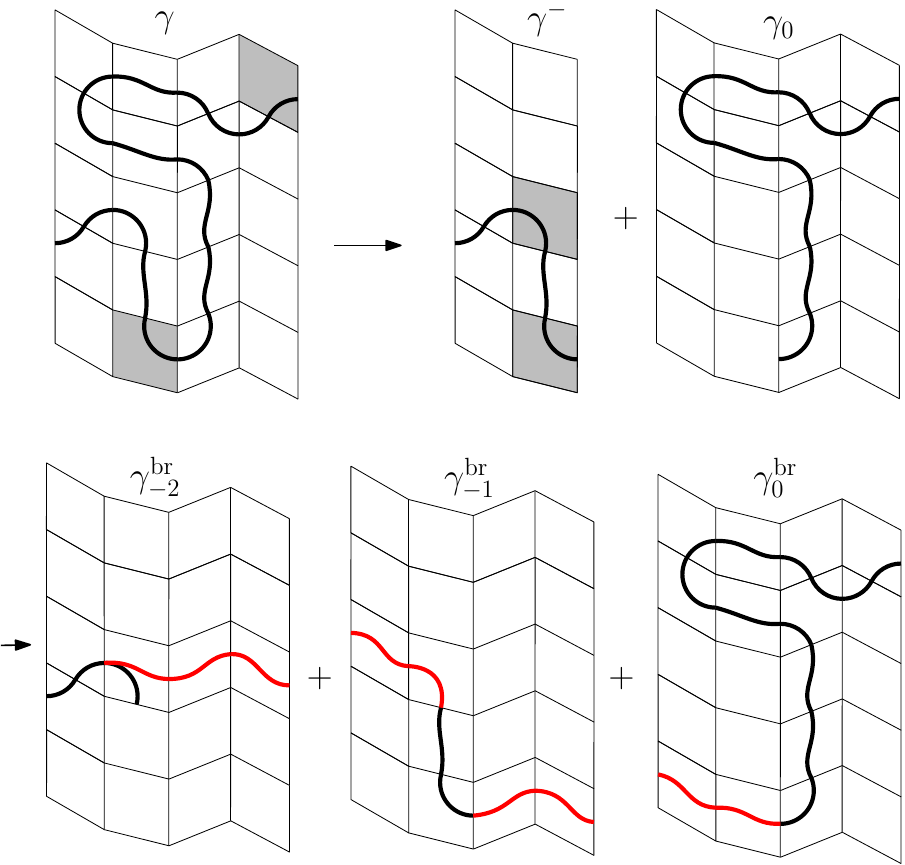}
    \caption{{\em Top Left:} A walk $\gamma$ in $\S_T(\Theta)$ with $r_{\mathrm{bot}}$ and~$r_{\mathrm{top}}$ marked in gray.
    {\em Top Right:} The decomposition of $\gamma$ in $\gamma^-$ and $\gamma_0$; $\gamma^+$ is void.
       {\em Bottom:} The further decomposition of $\gamma$ into basic pieces. These are completed by the red paths to form bridges.}
    \label{fig:SAWA}
\end{figure}

Importantly, in this way~$\gamma$ gets split in at most~$2T-1$ pieces. 
Indeed, the left-most points of $\gamma_{0},\gamma_1,\dots, \gamma_\ell$ are each strictly to the right of the preceding one. Thus $\ell < T$. 
Similarly, the right-most points of $\gamma_{0},\gamma_{-1},\dots, \gamma_{-k}$ are each strictly to the left of the preceding one, and $k <T.$
%

In general, it is not true that the weight of~$\gamma$ is equal to the product of the weights of the pieces obtained above, because
the rhombi containing~2 arcs in different pieces contribute~$w_1$ (or~$w_2$) to the weight of~$\gamma$ and~$u_1^2$ (or~$u_2^2$) to the product of the weights of the pieces. However, since~$u_1(\theta)^2 \geq w_1(\theta)$ and~$u_2(\theta)^2 \geq w_2(\theta)$ for any~$\theta \in [\tfrac{\pi}{3}, \tfrac{2\pi}{3}]$, we obtain the following inequality:
\begin{equation}
\label{eq_compare_weights_pieces_walk}
	\weight (\gamma ; 1, y) \leq \prod_{i=-k}^{\ell} {\weight(\gamma_i ; 1, y)}.
\end{equation}
Now complement the walks~$\gamma_i$ to create bridges by adding straight lines in the rhombi lying to the left (resp. right) of the lower (resp. upper) endpoint of~$\gamma_i$ and contained in the same rows as the endpoints (see Fig. \ref{fig:SAWA}). Small local modifications may be needed to glue the added paths to $\gamma_i$. Denote the resulting bridges by~$\gamma_i^{\mathrm{br}}$. Note that by the choice of~$\gamma_i$, the walks~$\gamma_i^{\mathrm{br}}$ do not have self-intersections. The walks~$\gamma_i$ and~$\gamma_i^{\mathrm{br}}$ differ by at most~$2T$ rhombi, which are empty for $\gamma_i$ but contain straight lines for $\gamma_i^{\mathrm{br}}$. Thus
\[
\weight(\gamma_i ; 1, y) \leq \frac{1}{v(\Theta)y} \weight(\gamma_i^{\mathrm{br}} ; 1, y)\, ,
\]
where~$v(\Theta)> 0$ is some constant which depends on $T$ and $\Theta$ only. 
Recall that there are at most~$2T-1$ pieces~$\gamma_i$. From this, the previous inequality and~\eqref{eq_compare_weights_pieces_walk}, we obtain:
\[
	\weight (\gamma; 1, y) \leq\frac{1}{[v(\Theta)y]^{2T-1}}  \prod_i {\weight(\gamma_i^{\mathrm{br}}; 1, y)}\, .
\]
Sum this inequality over all possible choices of~$\gamma$.  Using again that there are at most~$2T-1$ walks in the decomposition, 
the right-hand side can be bounded by the partition function of bridges:
\begin{equation}
\label{eq:compare_part_func_arc_walk}
    \SAW_{T,\Theta}(1, y) 
    \leq \frac{1}{[v(\Theta)y]^{2T-1}}\sum_{\gamma \, : \, 0 \to z, \, \gamma \subset \Omega_T}
    \prod_i {\weight(\gamma_i^{\mathrm{br}};1,y)} \leq \left[\frac{4T}{v(\Theta)y}\right]^{2T-1} (1+B_{T,\Theta}(1, y))^{2T-1},
\end{equation}
where the additional factor~$4T$ in the right hand side is due to the reconstruction cost of~$\gamma$ given~$(\gamma_i^{\mathrm{br}})_{i\in [-k,\ell]}$.

Hence, the radius of convergence of~$B_{T,\Theta}(1, y)$ and~$\SAW_{T,\Theta}(1, y)$ is the same. 

The same strategy may be used to show that $A_{T,\Theta}(1, y)$ and~$\SAW_{T,\Theta}(1, y)$ have the same radius of convergence. The only difference is that this time the subpaths $\gamma_i$ should be transformed into arcs rather than bridges.
\end{proof}

\subsection{Critical fugacity in the strip: $y_c^*(T,\Theta) = y_c(T,\Theta)$. }

Recall that the critical fugacity in a strip was defined in the introduction as 
\[
y_c(T,\Theta) = \sup \{y\, | \, \forall 0<x<1, \ \SAW_{T,\Theta} (x,y) < \infty \}.
\]
We show now that the two notions of critical fugacity in a strip, namely $y_c(T,\Theta)$ and $y_c^*(T,\Theta)$, coincide. 

\begin{proposition}\label{prop:fug_2_def_strip}
    Let~$\Theta= \{\theta_k\}_{k=1}^T$, where~$\theta_1= \tfrac{\pi}{3}$ and~$\theta_k\in [\tfrac{\pi}{3},\tfrac{2\pi}{3}]$ for~$k>1$. 
    Then~$y_c(T,\Theta) = y_c^*(T,\Theta)$.
\end{proposition}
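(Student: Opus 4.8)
The plan is to establish the two inequalities $y_c^*(T,\Theta)\le y_c(T,\Theta)$ and $y_c(T,\Theta)\le y_c^*(T,\Theta)$ separately. The first is immediate: if $y<y_c^*(T,\Theta)$ then $\SAW_{T,\Theta}(1,y)<\infty$, and since $x^{|\gamma|}\le 1$ for every walk $\gamma$ and every $0<x\le 1$, we get $\SAW_{T,\Theta}(x,y)\le\SAW_{T,\Theta}(1,y)<\infty$ for all $0<x<1$; hence $y\le y_c(T,\Theta)$, and letting $y\uparrow y_c^*(T,\Theta)$ gives the inequality.

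For the reverse inequality I would use that $\{y\ge 0:\SAW_{T,\Theta}(x,y)<\infty\text{ for all }0<x<1\}$ is a down-set (by monotonicity of $\SAW_{T,\Theta}$ in $y$) and that $y_c^*(T,\Theta)$ is the radius of convergence of the power series $y\mapsto\SAW_{T,\Theta}(1,y)$; it therefore suffices to prove that $\SAW_{T,\Theta}(1,y')<\infty$ whenever $1<y'<y<y_c(T,\Theta)$ (the case $y'\le 1$ being trivial once $\SAW_{T,\Theta}(1,1)<\infty$ is known, as then $\SAW_{T,\Theta}(1,y')\le\SAW_{T,\Theta}(1,1)$). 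Two facts are needed. First, $\SAW_{T,\Theta}(1,1)<\infty$: indeed $B_{T,\Theta}=1-\cos(\tfrac{3\pi}{8})A_{T,\Theta}<1$ by \eqref{eq_relation_strip}, and then $\SAW_{T,\Theta}(1,1)<\infty$ follows from the bound \eqref{eq:compare_part_func_arc_walk}. Second — and this is the real content — there is some $x_1>1$ with $\SAW_{T,\Theta}(x_1,1)<\infty$; equivalently, walks confined to the strip $\S_T(\Theta)$ have exponential growth rate (in their length) strictly below $1$.

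Granting the second fact, $\SAW_{T,\Theta}(1,y')<\infty$ follows by Hölder interpolation between $\SAW_{T,\Theta}(x_0,y)$ and $\SAW_{T,\Theta}(x_1,1)$. Pick $\theta\in(0,1)$ and $x_0\in(0,1)$ with $\tfrac{\log y'}{\log y}\le\theta\le\tfrac{\log x_1}{\log(x_1/x_0)}$; the rightmost quantity tends to $1$ as $x_0\uparrow 1$, so such a choice exists. Then $x_0^{\theta}x_1^{1-\theta}\ge 1$ and $y^{\theta}\ge y'$, so for every walk $\gamma$,
\[
\weight(\gamma)\,(y')^{b(\gamma)}\;\le\;\weight(\gamma)\,\big(x_0^{\theta}x_1^{1-\theta}\big)^{|\gamma|}\big(y^{\theta}\big)^{b(\gamma)}\;=\;\big[\weight(\gamma)\,x_0^{|\gamma|}y^{b(\gamma)}\big]^{\theta}\big[\weight(\gamma)\,x_1^{|\gamma|}\big]^{1-\theta}.
\]
Summing over $\gamma$ and applying Hölder's inequality yields $\SAW_{T,\Theta}(1,y')\le\SAW_{T,\Theta}(x_0,y)^{\theta}\,\SAW_{T,\Theta}(x_1,1)^{1-\theta}<\infty$, since $\SAW_{T,\Theta}(x_0,y)<\infty$ (because $x_0<1$ and $y<y_c(T,\Theta)$, using that the defining set of $y_c(T,\Theta)$ is a down-set) and $\SAW_{T,\Theta}(x_1,1)<\infty$.

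The main obstacle is the second fact, i.e.\ that $\S_T(\Theta)$ has growth rate $<1$. I would prove it by a sub-multiplicativity (Fekete) argument, as in the classical statement that confining self-avoiding walk to a strip strictly lowers its connective constant. Grading walks by their number of traversed rhombi $n$ (comparable to the length), let $\bar c_n$ be the total weight, summed over all starting points in one fixed row of $\S_T(\Theta)$, of walks traversing exactly $n$ rhombi. Cutting a walk after its first $n$ rhombi, using $\weight(\gamma)\le\weight(\gamma_1)\weight(\gamma_2)$ (the inequality $w_i\le u_i^2$, as in \eqref{eq_compare_weights_pieces_walk}) and the vertical translation invariance of $\S_T(\Theta)$, shows $\bar c_{n+m}\le \bar c_n\bar c_m$. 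On the other hand $\sum_n\bar c_n$ is finite: it equals the sum, over the finitely many relevant starting points $p$, of the total weight of all walks based at $p$, and each such quantity is finite by the Hammersley--Welsh bound of Proposition~\ref{prop:same_rad_conv} (whose decomposition is insensitive to the starting point) together with $B_{T,\Theta}<1$. A summable sub-multiplicative sequence must decay exponentially — were its infimal $n$-th root at least $1$ we would have $\bar c_n\ge 1$ for all $n$ and hence $\sum_n\bar c_n=\infty$ — so $\bar c_n\le\kappa^n$ eventually with $\kappa<1$, and therefore $\SAW_{T,\Theta}(x_1,1)=\sum_\gamma\weight(\gamma)x_1^{|\gamma|}\le\sum_n\bar c_n\,x_1^{2n}<\infty$ for every $x_1\in(1,\kappa^{-1/2})$, as required. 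Carrying out this last argument cleanly (in particular the bookkeeping of starting points and the reduction of lengths to rhombus counts) is where the actual work lies; all the other steps are routine.
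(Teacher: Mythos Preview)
Your proof is correct, but the route to the inequality $y_c(T,\Theta)\le y_c^*(T,\Theta)$ differs from the paper's. Both arguments handle $y_c^*(T,\Theta)\le y_c(T,\Theta)$ by trivial monotonicity in $x$. For the converse, the paper proves a Kesten-type pattern lemma: any walk $\gamma$ in $\S_T(\Theta)$ may be modified, by inserting short detours to the left boundary at regularly spaced horizontal lines, into a walk $\gamma^{\mathrm{fug}}$ with $b(\gamma^{\mathrm{fug}})\ge |\gamma|/C(T)$ and $|\gamma^{\mathrm{fug}}|-|\gamma|\le C(T)\,b(\gamma^{\mathrm{fug}})$. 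This yields the single inequality $\SAW_{T,\Theta}(x,x^{-C}y)\ge\SAW_{T,\Theta}(1,y)$, from which $y>y_c^*(T,\Theta)\Rightarrow y\ge y_c(T,\Theta)$ follows at once by letting $x\uparrow 1$.

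Your argument replaces this geometric insertion by an analytic step: you first show, via sub-multiplicativity in the number of arcs and the Hammersley--Welsh decomposition already proved in Proposition~\ref{prop:same_rad_conv}, that the strip is strictly subcritical, i.e.\ $\SAW_{T,\Theta}(x_1,1)<\infty$ for some $x_1>1$; you then interpolate by H\"older between the finite quantities $\SAW_{T,\Theta}(x_0,y)$ (with $x_0<1$) and $\SAW_{T,\Theta}(x_1,1)$ to control $\SAW_{T,\Theta}(1,y')$ for $y'<y$. This is more economical in that it recycles Proposition~\ref{prop:same_rad_conv} rather than building a new construction, and it isolates the strict subcriticality of the strip as the essential input. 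The paper's lemma, on the other hand, gives the sharper geometric statement that walks in $\S_T(\Theta)$ have a positive density of boundary contacts --- an assertion of independent interest closer in spirit to Kesten's pattern theorem.
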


We start by a technical lemma which in effect states that a walk in a strip has a positive density of points on the boundary. 
Such a result is in the spirit of Kesten's pattern theorem \cite{kestenone}.
For completeness and simplicity, we provide a proof with no reference to Kesten's result. 

\begin{lemma}\label{lem:fug_xy}
    Let~$\Theta= \{\theta_k\}_{k=1}^T$, where~$\theta_1= \tfrac{\pi}{3}$ and~$\theta_k\in [\tfrac{\pi}{3},\tfrac{2\pi}{3}]$ for~$k>1$. 
    Then there exists a constant~$C(T)>0$ which depends only on~$T$, such that for any~$0<x\leq1$ and~$y>1$
\begin{align}
 \SAW_{T,\Theta}(x,y) &\leq \SAW_{T,\Theta}(xy,1), \label{ineq:fug_xy}\\
 \SAW_{T,\Theta}(x,x^{-C}y) &\geq \SAW_{T,\Theta}(1,y)\,.       \label{ineq:fug_x_c_y}
\end{align}
\end{lemma}

\begin{proof}
Inequality~\eqref{ineq:fug_xy} follows from the fact that the length of a walk is greater than the number of times it visits the boundary.

Inequality~\eqref{ineq:fug_x_c_y} is proven by altering arbitrary walks $\gamma$ to form walks $\gamma^{\text{fug}}$ which have a positive density of points on the left boundary. We describe the map $\gamma \mapsto \gamma^{\text{fug}}$ next.

Recall the indexing of the rows of $\S_T(\Theta)$ by $\mathbb Z$. 
Call  \emph{a marked line} of $\S_T(\Theta)$ the collection of edges separating rows~$(k+\tfrac12)T$ and~$(k+\tfrac12)T+1$ with~$k\in\mathbb{Z}$. 
Let $\gamma$ be a walk on $\S_T(\Theta)$ starting at $0$.
To define~$\gamma^{\text{fug}}$ insert at each marked line two rows of rhombi, containing arcs as described below.
Fix a marked line~$\ell$, the two rows of rhombi inserted at $\ell$ contain:
\begin{itemize*}
\item for each point in~$\gamma\cap \ell$ except the leftmost one, insert two straight vertical arcs of type $v$;
\item for the leftmost point in~$\gamma\cap \ell$, insert a path contained in the two inserted rows that, when viewed from bottom to top, travels left in the lower row, touches the first column turning upwards, then travels back right using the upper row (if the left-most point is in the first column, complete the added rhombi as in the point above);
\item all rhombi not affected by this procedure are void. 
\end{itemize*}
Perform this for all marked lines. Note that when marked lines are not crossed by $\gamma$, the added rows only contain empty rhombi. 
It is easy to see that the result of this procedure is a self-avoiding walk on $\S_T(\Theta)$, which we call $\gamma^{\text{fug}}$.
See Fig. \ref{fig:pattern} for an example. 

\begin{figure}
    \centering
      \includegraphics[scale=0.7]{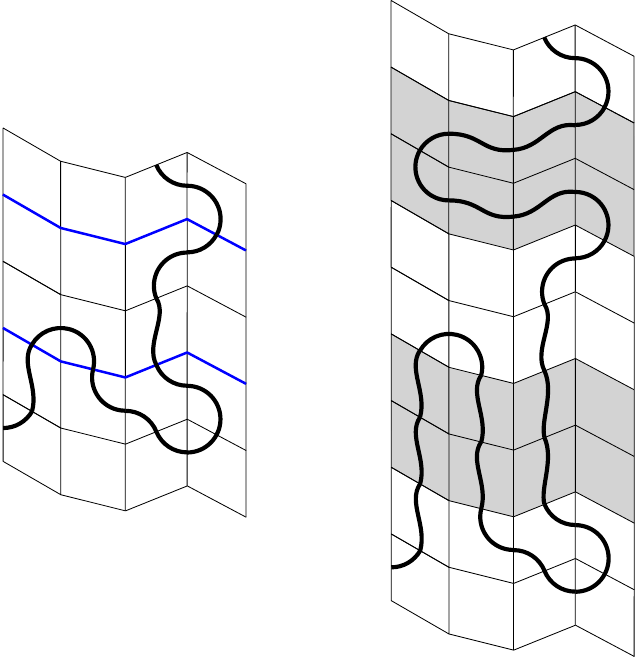}
    \caption{{\em Left:} A walk $\gamma$ in $\S_T(\Theta)$ crossing two marked lines (blue). {\em Right:} The associated walk $\gamma^{\text{fug}}$; the added rows are marked in gray.}
    \label{fig:pattern}
\end{figure}

The map~$\gamma \mapsto \gamma^{\text{fug}}$ is injective. Indeed it suffices to delete the added rows (whose indices are deterministic) to retrieve $\gamma$ from $\gamma^{\text{fug}}$. Thus
\begin{align}\label{eq:inj}
\SAW_{T,\Theta}(x,x^{-C}y) \geq \sum_{\gamma} \weight_{\Theta}(\gamma^{\text{fug}};x,x^{-C}y), \qquad \text{for all $C>0$,}
\end{align}
since in the right hand side we only sum the weight of images of walks by the map defined above. 
 
Now observe that, since the length of $\gamma$ inside any rhombus is at most~4,~$\gamma$ crosses at least $|\gamma|/(4T^2)$ marked lines. 
Each marked line generates at least one contribution to the fugacity for $\gamma^{\text{fug}}$, thus $b(\gamma^{\text{fug}}) \geq |\gamma|/(4T^2) $. 
On the other hand, $\gamma$ visits at most $2|\gamma|/T$ marked lines and for each such line the added rhombi contain a total length of arcs of at most $8T$. 
Thus $|\gamma^{\text{fug}}| - |\gamma| \leq 16|\gamma|.$
In conclusion 
\begin{align*}
	\frac{b(\gamma^{\text{fug}})}{|\gamma^{\text{fug}}| - |\gamma|} \geq \frac{1}{64 T^2} =:\frac1C.
\end{align*}
In particular 
\begin{align*}
	\frac{\weight_{\Theta}(\gamma^{\text{fug}};x,x^{-C}y)}{\weight_{\Theta}(\gamma;x,y)}
	= x^{|\gamma^{\text{fug}}| -  |\gamma| - Cb(\gamma^{\text{fug}}) }y^{b(\gamma^{\text{fug}}) - b(\gamma)}
	\geq 1,
\end{align*}
since the exponents for $x$ and $y$ are negative and positive, respectively. 
Inserting this into \eqref{eq:inj} we find
\begin{align*}
	\SAW_{T,\Theta}(x,x^{-C}y)
	&\geq  \sum_{\gamma} \weight_{\Theta}(\gamma^{\text{fug}};x,x^{-C}y)\\
	&\geq \sum_{\gamma} \weight_{\Theta}(\gamma;1,y)
	=\SAW_{T,\Theta}(1,y).
	\qedhere
\end{align*}

\end{proof}

\begin{proof}[Proof of Proposition~\ref{prop:fug_2_def_strip}]
First we show the inequality $y_c(T,\Theta) \ge y_c^*(T,\Theta)$. 
Take~$y>y_c(T,\Theta)$. Then for~$x<1$ large enough,~$\SAW_{T,\Theta}(x,y)$ diverges. 
By Ineq.~\eqref{ineq:fug_xy}, one has that $\SAW_{T,\Theta}(xy;1)$ diverges as well. Hence~$xy\ge y_c^*(T,\Theta)$. 
Since~$x$ may be arbitrarily close to~1, we proved that~$y \ge y_c^*(T,\Theta)$.
By choice of $y$ this implies  $y_c(T,\Theta) \ge y_c^*(T,\Theta)$.

Let us now show the converse inequality~$y_c^*(T,\Theta) \ge y_c(T,\Theta)$. 
Take~$y>y_c^*(T,\Theta)$. Then $\SAW_{T,\Theta}(1;y)$ diverges. Use Ineq.~\eqref{ineq:fug_x_c_y} to see that~$\SAW_{T,\Theta}(x,x^{-C}y)$ diverges as well for any $x < 1$, where $C = C(T) > 0$ is given by Lemma~\ref{lem:fug_xy}. Thus~$x^{-C}y \geq y_c(T,\Theta)$ for all~$x < 1$, which implies that~$y \ge y_c(T,\Theta)$.
Since $y>y_c^*(T,\Theta)$ is arbitrary, we proved $y_c^*(T,\Theta) \ge y_c(T,\Theta)$.
\end{proof}

\subsection{Critical fugacities in strips do not depend on $\Theta$}

Our next goal is to show that~$y_c(T,\Theta) \to 1+\sqrt{2}$, i.e. that the critical fugacities on strips of rhombi converge to the critical fugacity on the hexagonal lattice, which corresponds to the case when all  rhombi have angle~$\pi/3$. 

By Proposition~\ref{prop:fug_2_def_strip}, $y_c(T,\Theta)$ is the radius of convergence of~$\SAW_{T,\Theta} (1,y)$. In the spirit of notation we introduced before, we denote by~$y_c(T,\pi/3)$ the radius of convergence of the series~$\SAW_{T,\pi/3} (1,y)$, i.e. in the case when all rhombi have angle~$\tfrac{\pi}{3}$.  In the next lemma, it is shown that~$y_c(T,\Theta)$ can only increase, when the rightmost column of rhombi is erased, or when all angles of the rhombi are changed to~$\tfrac{\pi}{3}$.

\begin{lemma}
    \label{lem:fug_monotone}
    Let~$\Theta= (\theta_k)_{k\geq1}$ be such that~$\theta_1= \tfrac{\pi}{3}$ and~$\theta_k\in [\tfrac{\pi}{3},\tfrac{2\pi}{3}]$ for~$k>1$and $T \geq 2$. Then
    \begin{itemize*}
    \item[(i)] $y_c(T,\pi/3) \geq y_c(T,\Theta) $;
    \item[(ii)] $y_c(T-1,\Theta)\geq y_c(T,\Theta)$.
    \end{itemize*}
\end{lemma}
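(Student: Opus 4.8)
The plan is to handle the two parts separately. Part~(ii) should be immediate: since $\S_{T-1}(\Theta)$ is, by definition, the sub-tiling formed by the $T-1$ leftmost columns of $\S_T(\Theta)$, every self-avoiding walk on $\S_{T-1}(\Theta)$ starting at $0$ is also a walk on $\S_T(\Theta)$, with the same length $|\gamma|$, the same bulk weight $\weight_\Theta(\gamma)$ and the same number of boundary visits $b(\gamma)$ (boundary visits are counted in the leftmost column, which the two tilings share). Hence $\SAW_{T-1,\Theta}(x,y)\le\SAW_{T,\Theta}(x,y)$ for all $x,y\ge0$, so every $y$ for which $\SAW_{T,\Theta}(x,\cdot)$ is finite for all $x<1$ works for $\S_{T-1}(\Theta)$ as well; this gives $y_c(T-1,\Theta)\ge y_c(T,\Theta)$.

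For part~(i) I would work with arcs rather than with all walks. By Propositions~\ref{prop:same_rad_conv} and~\ref{prop:fug_2_def_strip}, $y_c(T,\Theta)$ equals the radius of convergence of the power series $A_{\Theta,T}(1,y)=\sum_{\gamma\text{ arc}}\weight_\Theta(\gamma)\,y^{b(\gamma)}$, whose coefficients are non-negative; and since dominating all Taylor coefficients of a non-negative power series can only shrink its radius of convergence, it is enough to prove the coefficient-wise inequality
\[
A_{\Theta,T}(1,y)\ \ge\ A_{\pi/3,T}(1,y),
\]
where $\pi/3$ denotes the constant sequence. The point of using arcs is that both endpoints of an arc lie on the left boundary of the strip, i.e.\ in the leftmost column, which I will keep fixed throughout; for all-walks or for bridges an endpoint could sit in a column that gets moved.

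I would obtain the displayed inequality through a finite chain of two elementary moves. \emph{Move~1} (turn the last column into a $\pi/3$-column): assuming $\theta_1=\pi/3$ and $T\ge2$, I claim $A_{(\theta_1,\dots,\theta_{T-1},\pi/3),T}(1,y)\le A_{\Theta,T}(1,y)$ coefficient-wise. This is the fugacity analogue of Lemma~\ref{lem:saw-strip-monotonicity}: the obvious bijection between arcs on the two strips preserves $b(\gamma)$ (only the last column changes and $T\ge2$, so the leftmost column is untouched), while the ratio of bulk weights is, exactly as in that lemma, a product over the disjoint arcs of $\gamma$ inside the last column of quantities of the form~\eqref{eq:arc_weight}, each minimised at $\pi/3$ (in the last column only configurations of type $u_1,u_2,v$ occur, so no $w_1,w_2$-weight needs to be compared). \emph{Move~2} (transpose two interior columns): assuming $\theta_1=\pi/3$ and $2\le i<T$, exchanging columns $i$ and $i+1$ should leave $A_{\cdot,T}(1,y)$ unchanged as a power series. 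This is the fugacity analogue of Proposition~\ref{prop:saw-YB-strip}, and I would prove it by rerunning that proof with $\weight(\gamma)$ replaced by $\weight(\gamma)\,y^{b(\gamma)}$: since $i\ge2$, every Yang--Baxter step there rearranges three rhombi forming a hexagon contained in columns $i,i+1$, hence disjoint from the leftmost column, so writing an arc as its part outside the hexagon times its part inside the hexagon, the inside contribution is invariant by Proposition~\ref{prop:YB} while the outside contribution, which alone determines $b(\gamma)$, is literally unchanged. Chaining these, I first apply Move~1 to make $\theta_T=\pi/3$, then repeatedly slide each remaining non-$\pi/3$ angle into position $T$ by Move~2 and kill it by Move~1; after $T-1$ rounds all angles equal $\pi/3$, which gives the coefficient-wise inequality and hence $y_c(T,\pi/3)\ge y_c(T,\Theta)$.

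The main obstacle I anticipate is the bookkeeping in Move~2: one must be sure that the auxiliary device in the proof of Proposition~\ref{prop:saw-YB-strip}---adding one extra rhombus at the top of columns $i,i+1$ and sliding it down through $\Rect_{T,L}(\Theta)$ by successive Yang--Baxter transformations---stays confined to columns $i,i+1$ and never reaches the leftmost column, so that $b(\gamma)$ really is a function of the part of the walk left invariant, and so that the set $\alpha$ of left-boundary endpoints is literally the same before and after the swap. For $i\ge2$ this confinement holds, and the proof of Proposition~\ref{prop:saw-YB-strip} then transfers with only cosmetic changes; checking this carefully is the one place where some care is needed.
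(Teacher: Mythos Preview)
Your proposal is correct and follows essentially the same route as the paper: part~(ii) is the trivial inclusion of walks in $\S_{T-1}$ into walks in $\S_T$, and part~(i) is the fugacity analogue of Lemma~\ref{lem:saw-strip-monotonicity} combined with Proposition~\ref{prop:saw-YB-strip}, the key observation being that since $\theta_1=\pi/3$ all Yang--Baxter moves and all column replacements occur in columns $\geq 2$ and hence leave $b(\gamma)$ untouched. The paper states this more tersely, but your spelling-out of the two moves and of why the leftmost column is never affected is exactly the content of the adaptation the paper alludes to.
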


\begin{proof}
(i)  By Proposition~\ref{prop:same_rad_conv}, it is enough to show that for any~$y \geq 0$ one has~$A_{T,\Theta} (1,y)\ge A_{\pi/3, T} (1,y)$. This inequality was shown in Lemma~\ref{lem:saw-strip-monotonicity} in the absence of surface fugacities. It is easy to check that the proof adapts straightforwardly when fugacities are added on the left side. Indeed the proof is based on Yang-Baxter transformations that do not affect the left-most column, since this one already has angle $\pi/3$. 

(ii) The inequality~$A_{T,\Theta} (y)\ge A_{T-1,\Theta} (y)$ is trivial, since all walks contributing to the right hand side also contribute to the left hand side. The inequality on the radii of convergence follows readily. 
\end{proof}

Now we are ready to finish the proof of Proposition~\ref{prop:y_c_strip} by showing that~$y_c(T,\Theta) \to 1+\sqrt{2}$.


\begin{proof}[Proof of Proposition~\ref{prop:y_c_strip}]
In~\cite{BBDDG} it was shown that the critical surface fugacity on the hexagonal lattice is equal to~$1+\sqrt{2}$. In particular, Corollary~8 in~\cite{BBDDG} implies that~$y_c^*(\pi/3, T) \to 1+\sqrt{2}$. In Lemma~\ref{lem:fug_monotone} it is shown that~$y_c^*(\pi/3, T) \geq y_c^*(T,\Theta) $, for any~$T$. Hence, 
$$ \lim_{T \to \infty} y_c^*(T,\Theta) \leq 1+\sqrt{2}.$$
The existence of the limit above is ensured by the monotonicity of $y_c^*(T,\Theta)$ in~$T$.

The opposite inequality follows directly from Corollary~\ref{cor:small_y}. Indeed, suppose that~$\lim y_c^*(T,\Theta)<1+\sqrt{2}$. Then for some~$T$, one has~$y_c^*(T,\Theta)<1+\sqrt{2}$. Consider a value of~$y$ between~$y_c^*(T,\Theta)$ and~$1+\sqrt{2}$ and note that by Corollary~\ref{cor:small_y}, $B_{T,\Theta} (1,y) = B_{T} (\Theta)(y) \leq \tfrac{\sqrt2 y}{1+\sqrt{2}-y}$.
This contradicts the assumption that~$y>y_c^*(T,\Theta)$, that is the radius of convergence of~$B_{T,\Theta}(1,\cdot)$.
\end{proof}

\subsection{Critical fugacity in half-plane: proof of Theorem \ref{thm-fugacity}}

In order to prove Theorem~\ref{thm-fugacity}, it remains to show that~$y_c = 1+\sqrt{2}$. 
Recall that~$y_c$ is defined as the supremum of all~$y$ such that~$\SAW_\Theta (x,y)$ is finite for all~$x<1$.

\begin{proof}[Proof of Theorem~\ref{thm-fugacity}]
    We will proceed by double inequality. 
    Let~$y> 1+\sqrt{2}$. 
    By Proposition~\ref{prop:y_c_strip}, there exists ~$T$ such that~$y> y_c(T,\Theta)$. Hence, by the definition of~$y_c(T,\Theta)$, there exists~$0<x<1$ such that~$\SAW_{T,\Theta} (x,y) = \infty$. 
    Since $\SAW_{T,\Theta} (x,y) \leq \SAW_\Theta (x,y)$, the latter diverges as well. This implies that~$y\ge y_c$. 
    Recall that~$y$ was chosen arbitrarily greater than~$1+\sqrt{2}$, thus, $y_c \leq 1+\sqrt{2}$.
    
	The opposite inequality is based on the results obtained through the parafermionic observable with fugacity.
    Take~$1 \leq y<1+\sqrt{2}$. By Corollary~\ref{cor:small_y},~$B_{T,\Theta}(1,y)<c$, where~$c$ is a constant that depends only on~$y$. 
    Note that all walks which contribute to~$B_{T,\Theta}(1,y)$ have to cross at least~$T$ rhombi. 
    Thus,~$B_{T,\Theta}(x,y) < x^T\cdot c$, 
    and $\sum_{T \geq 1}B_{T,\Theta}(x,y) < \frac{c}{1-x}< \infty$ for all $x < 1$. 
   
   	Fix $x < 1$. 
   	Let us now prove that $\SAW_\Theta(x,y) < \infty$. Write $\Theta'$ for the sequence $(\theta_2,\theta_3,\dots)$.
	Let $\gamma$ be a walk in $\H(\Theta)$. 
	Write $\gamma$ as the concatenation of two walks $\gamma^{(a)}$ and $\gamma^{(w)}$, 
	where $\gamma^{(a)}$ ends at the last visit of $\gamma$ of column $1$. 
	The walk $\gamma^{(w)}$ is contained in columns $2,3,\dots$ and hence does not feel the effect of the fugacity. 
	Thus it may be viewed as a walk in $\H(\Theta')$ with weight $\weight_{\Theta'}(\gamma^{(w)};x,1)$.
	
	Further split $\gamma^{(a)}$ in two walks: 
	$\gamma^{(1)}$ is the walk from the starting point to the first point of $\gamma^{(a)}$ in the right-most column visited by $\gamma^{(a)}$ (write $T$ for the index of this column); 
	$\gamma^{(2)}$ is simply $\gamma^{(a)}\setminus \gamma^{(1)}$. 
	The endpoints of $\gamma^{(1)}$ and $\gamma^{(2)}$ may be modified locally to create two bridges $\gamma^{(b1)}$ and $\gamma^{(b2)}$ in $\S_T(\Theta)$.
	Due to the local modifications, there exists a universal constant $\delta>0$ such that 
	\begin{align*}
		\weight_\Theta(\gamma^{(a)};x,y) \le  \weight_\Theta(\gamma^{(1)};x,y)\weight_\Theta(\gamma^{(2)};x,y) 
		\leq \delta \weight_\Theta(\gamma^{(b1)};x,y)\weight_\Theta(\gamma^{(b2)};x,y).
	\end{align*}
	Thus we associated to $\gamma$ a triplet $\gamma^{(b1)}, \gamma^{(b2)}, \gamma^{(w)}$, the first two being bridges in a certain $\S_T(\Theta)$ and the third being a walk in $\H_T(\Theta')$. 
	This operation is clearly injective, and we find
	\begin{align*}
		\SAW_\Theta(x,y) 
		&\leq \sum_{\gamma}\weight_\Theta(\gamma^{(1)};x,y)\weight_\Theta(\gamma^{(2)};x,y)\weight_\Theta(\gamma^{(w)};x,y)\leq \delta \sum_{T \geq 1} B_{T, \Theta}(x,y)^2 \ \SAW_{\Theta'}(x,1)\\
		&\leq \delta \Big[\sum_{T \geq 1} B_{T, \Theta}(x,y)\Big]^2 \SAW_{\Theta'}(x,1)\leq \delta \Big(\frac{c}{1-x}\Big)^2\SAW_{\Theta'}(x,1).
	\end{align*}
	Finally, since $x<1$, $\SAW_{\Theta'}(x,1)< \infty$ which implies $\SAW_\Theta(x,y) < \infty$. 
	Since $x < 1$ is arbitrary, this shows that $y < y_c$, and thus that $y_c \geq 1 + \sqrt 2$. 
\end{proof}

\bibliographystyle{alpha}

\bibliography{SAW}

\Addresses

\end{document}